\providecommand{\U}[1]{\protect\rule{.1in}{.1in}}
\providecommand{\U}[1]{\protect\rule{.1in}{.1in}}
\providecommand{\U}[1]{\protect\rule{.1in}{.1in}}
\providecommand{\U}[1]{\protect\rule{.1in}{.1in}}
\providecommand{\U}[1]{\protect\rule{.1in}{.1in}}
\providecommand{\U}[1]{\protect\rule{.1in}{.1in}}
\providecommand{\U}[1]{\protect\rule{.1in}{.1in}}
\newtheorem{theorem}{Theorem}[section]
\newtheorem{corollary}[theorem]{Corollary}
\newtheorem{definition}[theorem]{Definition}
\newtheorem{example}[theorem]{Example}
\newtheorem{lemma}[theorem]{Lemma}
\newtheorem{proposition}[theorem]{Proposition}
\newtheorem{remark}[theorem]{Remark}
\newtheorem{remarks}[theorem]{Remarks}
\def \A{{\cal{A}}}
\begin{document}
\title[Alcove walks and $k$-Schur functions]{Alcove random walks, $k$-Schur functions and the minimal boundary of the
$k$-bounded partition poset}
\date{June, 2018}
\author{C\'{e}dric Lecouvey and Pierre Tarrago}

\begin{abstract}
We use $k$-Schur functions to get the minimal boundary of the $k$-bounded
partition poset. This permits to describe the central random walks on affine
Grassmannian elements of type $A$ and yields a rational expression for their
drift. We also recover Rietsch's parametrization of totally nonnegative
unitriangular Toeplitz matrices without using quantum cohomology of flag
varieties. All the homeomorphisms we define can moreover be made explicit by
using the combinatorics of $k$-Schur functions and elementary computations
based on Perron-Frobenius theorem.

\end{abstract}
\maketitle

\section{Introduction}

A function on the Young graph is harmonic when its value on any Young diagram
$\lambda$ is equal to the sum of its values on the Young diagrams obtained by
adding one box to $\lambda$. The set of extremal nonnegative such functions
(i.e. those that cannot be written as a convex combination) is called the
minimal boundary of the Young graph. It is homeomorphic to the Thoma simplex.
Kerov and Vershik proved that the extremal nonnegative harmonic functions give
the asymptotic characters of the symmetric group.\ O'Connell's results
\cite{OC1} also show that they control the law of some conditioned random
walks.\ In another but equivalent direction, Kerov-Vershik approach of these
harmonic functions yields both a simple parametrization of the set of infinite
totally nonnegative unitriangular Toeplitz matrices (see \cite{Ker}) and a
characterization of the morphisms from the algebra $\Lambda$ of symmetric
functions to $\mathbb{R}$ which are nonnegative on the Schur functions. These
results were generalized in \cite{LLP2} and \cite{LT}.\ A crucial observation
here is the connection between the Pieri rule on Schur functions and the
structure of the Young graph (which is then said multiplicative in
Kerov-Vershik terminology).

There is an interesting
$k$-analogue $\mathcal{B}_{k}$ of the Young lattice of partitions whose
vertices are the $k$-bounded partitions (i.e. those with no parts greater than
$k$). Its oriented graph structure is isomorphic to the Hasse poset on the
affine Grassmannian permutations of type $A$ which are minimal length coset
representatives in $\widetilde{W}/W$, where $\widetilde{W}$ is the affine type
$A_{k}^{(1)}$ group and $W$ the symmetric group of type $A_{k}$. The graph
$\mathcal{B}_{k}$ is also multiplicative but we have then to replace the
ordinary Schur functions by the $k$-Schur functions (see \cite{LLMSSZ} and the
references therein) and the algebra $\Lambda$ by $\Lambda_{(k)}=\mathbb{R}%
[h_{1},\ldots,h_{k}]$.\ The $k$-Schur functions were introduced by Lascoux,
Lapointe and Morse \cite{LLM} as a basis of $\Lambda_{(k)}$. It was
established by Lam \cite{LamLR} that their corresponding constant structures
(called $k$-Littlewood-Richardson coefficients) are nonnegative.\ This was
done by interpreting $\Lambda_{(k)}$ in terms of the homology ring of the
affine Grassmannian which, by works of Lam and Shimozono, can be conveniently
identified with the quantum cohomology ring of partial flag varieties studied
by Rietsch \cite{Ri}. By merging these two geometric approaches one can theoretically
deduce that the set of morphisms from $\Lambda_{(k)}$ to $\mathbb{R}$,
nonnegative on the $k$-Schur functions, are also parametrized by
$\mathbb{R}_{\geq0}^{k}$.

In this paper, we shall use another approach to avoid sophisticated geometric
notions and make our construction as effective as possible. Our starting point
is the combinatorics of $k$-Schur functions. We prove that they permit to get an
explicit parametrization of the morphisms $\varphi$ nonnegative on the
$k$-Schur functions, or equivalently of all the minimal $t$-harmonic functions
with $t\geq0$ on $\mathcal{B}_{k}$. Both notions are related by the simple
equality $t=\varphi(s_{(1)})$. Each such morphism is in fact completely
determined by its values $\vec{r}=(r_{1},\ldots,r_{k})\in\mathbb{R}_{\geq
0}^{k}$ on the Schur functions indexed by the rectangle partitions
$R_{a}=(k-a+1)^{a}.$ We get a bi-continuous (homeomorphism) parametrization
which is moreover effective in the sense one can compute from $\vec{r}$ the
values of $\varphi$ on any $k$-Schur function from the Perron Frobenius vector
of a matrix $\Phi$ encoding the multiplication by $s_{(1)}$ in $\Lambda_{(k)}%
$. Also, applying the primitive element theorem in the field of fractions of $\Lambda_{(k)}$ permits to prove that for any fixed
$t\geq0$, each $\varphi(s_{\lambda}^{(k)})$ is a rational function on
$\mathbb{R}_{\geq0}^{k}$.\ So, the only place where geometry is needed in this paper is
in Lam's proof of the nonnegativity of the $k$-Schur coefficients. As far as
we are aware a complete combinatorial $k$-Littlewood-Richardson rule is not
yet available (see nevertheless \cite{MS}).

Random walks on reduced alcoves paths have been considered by Lam in
\cite{Lam2}.\ They are random walks on a particular tessellation of
$\mathbb{R}^{k}$ by alcoves supported by hyperplanes, where each hyperplane
can be crossed only once.\ The random walks considered in this paper are
central and thus differ from those of \cite{Lam2}. Two trajectories with the
same ends will have the same probability (see \S\ref{SubSec_Compar} for a comparison between the two models). We characterize all the possible
laws of these alcove random walks and also get a simple algebraic expression
of their drift as a rational function on $\mathbb{R}_{\geq0}^{k}$. Our results
are more precisely summarized in the following Theorem.

\begin{figure}
\begin{center}
\scalebox{0.8}{\includegraphics[
height=7.0424cm,
width=7.0973cm
]
{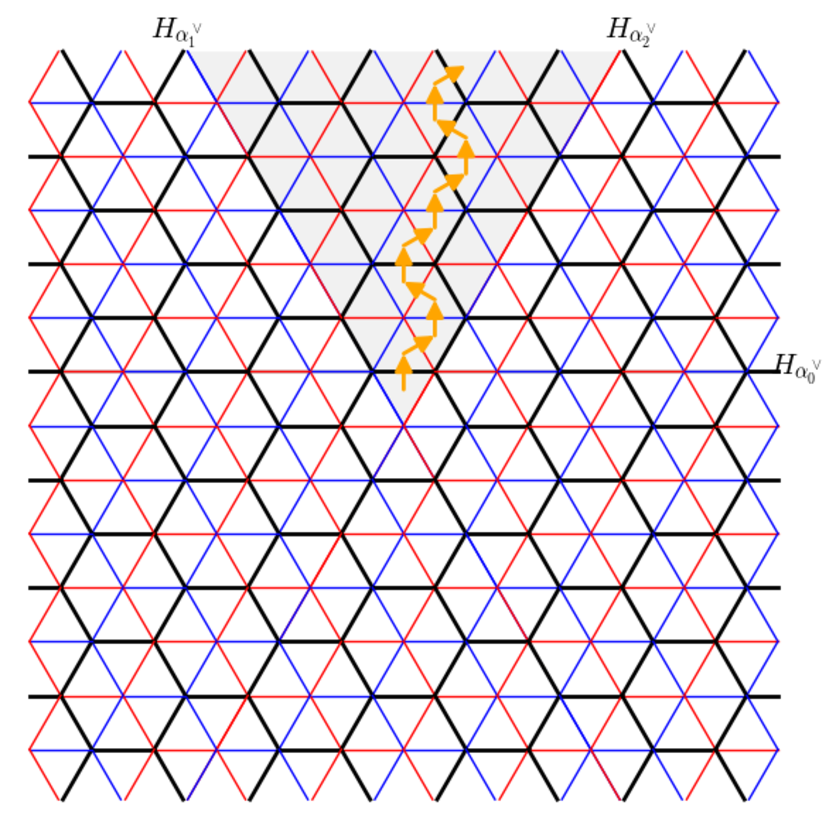}}
\\
\caption{A reduced alcove walk on Grassmannian elements for $k=2$}
\end{center}
\end{figure}

\begin{theorem}
\ 

\begin{enumerate}
\item To each $\vec{r}\in\mathbb{R}_{\geq0}^{k}$ corresponds a unique morphism
$\varphi:\Lambda_{(k)}\rightarrow\mathbb{R}$ nonnegative on the $k$-Schur
functions and such that $\varphi(s_{R_{a}})=r_{a}$ for any $a=1,\ldots,k.$

\item The previous one-to-one correspondence is a homeomorphism, and
$\varphi$ can be explicitly computed from $\vec{r}$ by using Perron
Frobenius theorem.

\item The minimal boundary of $\mathcal{B}_{k}$ is homeomorphic to a simplex
$\mathcal{S}_{k}$ of $\mathbb{R}_{\geq0}^{k}$.

\item To each $\vec{r}\in\mathcal{S}_{k}$ corresponds a central random walk
$(v_{n})_{n\geq0}$ on affine Grassmannian elements which verifies a law of
large numbers. The coordinates of its drift are the image by $\varphi$ of
rational fractions in the $k$-Schur functions. They are moreover rational on
$\mathcal{S}_{k}$.
\end{enumerate}
\end{theorem}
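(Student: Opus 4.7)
The plan is to build everything on a spectral analysis of the multiplication-by-$s_{(1)}$ operator on $\Lambda_{(k)}$ in the $k$-Schur basis, using the identification: for a morphism $\varphi$ with $t=\varphi(s_{(1)})>0$, the normalized values $f(\lambda):=\varphi(s_\lambda^{(k)})/t^{|\lambda|}$ form a nonnegative $t$-harmonic function on $\mathcal{B}_k$ if and only if $\varphi$ is nonnegative on the $k$-Schur functions. This equivalence rests on the $k$-Pieri rule (which encodes the edges of $\mathcal{B}_k$) together with Lam's positivity of the $k$-Littlewood-Richardson coefficients, the sole external geometric input the paper allows.

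For parts (1) and (3), the structural observation is that $s_{R_1},\ldots,s_{R_k}$ generate a polynomial subring $Z$ of $\Lambda_{(k)}$ over which $\Lambda_{(k)}$ is a free module of finite rank; this is the content of the primitive-element reduction announced in the introduction, and follows from standard properties of the $k$-Schur basis. Substituting $s_{R_a}\leftarrow r_a$ specializes $\Lambda_{(k)}$ to a finite-dimensional algebra on which multiplication by $s_{(1)}$ is represented by a nonnegative matrix $\Phi_{\vec{r}}$. Perron-Frobenius then furnishes a dominant eigenvalue $t(\vec{r})\geq 0$ and a positive left eigenvector, unique up to normalization, whose entries give the values $\varphi(s_\lambda^{(k)})$ on a chosen finite basis; the remaining values of $\varphi$ are recovered by multiplying by monomials in the $r_a$. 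Uniqueness of $\varphi$ for fixed $\vec{r}$ follows from the uniqueness of the Perron eigenvector together with the normalization $\varphi(s_\emptyset^{(k)})=1$, and the bi-continuity in (3) is the standard continuous dependence of Perron-Frobenius data on matrix entries, plus trivial continuity of evaluation.

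For (2), Jacobi-Trudi identifies the $a\times a$ south-west initial minor of the unitriangular Toeplitz matrix with entries $M_{ij}=\varphi(h_{j-i})$ as $\varphi(s_{R_a})=r_a$. Combined with (1) this gives the forward direction of Rietsch's parametrization; the backward direction follows from the classical Edrei-Aissen-Schoenberg-Whitney characterization of totally nonnegative unitriangular Toeplitz matrices, together with the fact that ordinary Schur functions indexed by $k$-bounded partitions expand nonnegatively in $k$-Schur functions. Part (4) is the restriction to $t(\vec{r})=1$: the minimal boundary of $\mathcal{B}_k$ is the set of extremal nonnegative $1$-harmonic functions, and since multiplicativity forces minimality, the homeomorphism of (1)-(3) restricts to a homeomorphism onto the level set $\mathcal{S}_k=\{\vec{r}\in\mathbb{R}_{\geq 0}^k:t(\vec{r})=1\}$, which is homeomorphic to a standard simplex.

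Finally for (5), the central random walk $(v_n)$ on affine Grassmannian elements is, via the isomorphism of $\mathcal{B}_k$ with the Hasse poset of affine Grassmannian permutations, the Doob $h$-transform of the simple walk on $\mathcal{B}_k$ by the minimal harmonic function attached to $\vec{r}$. Minimality yields a law of large numbers by standard martingale and ergodicity arguments, and the drift is an expected one-step increment that factors as a ratio of $\varphi$-values on explicit polynomial combinations of $k$-Schur functions; the primitive-element theorem then shows each such value is rational in $\vec{r}$. The main obstacle I anticipate is the rigorous establishment of the finite-rank structure of $\Lambda_{(k)}$ over $Z$ and the uniform applicability of Perron-Frobenius across the whole parameter space, with special care at boundary points where $\Phi_{\vec{r}}$ may fail to be irreducible and a limiting argument is needed to extend continuity to the closure.
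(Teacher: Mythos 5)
Your proposal tracks the paper's strategy closely for parts (1), (3), and (4): the free-module structure of $\Lambda_{(k)}$ over $\mathbb{A}=\mathbb{R}[s_{R_1},\ldots,s_{R_k}]$ (which the paper derives from the factorization Corollary \ref{reducedCase} and the field-theoretic Proposition \ref{PropLK}), the Perron--Frobenius analysis of the specialized matrix $\Phi_{\vec r}$, and the continuity/limiting argument on the boundary (the paper settles it via Lemma \ref{Lemma_Bounded}, the finiteness of the variety $\mathcal{R}_{\vec r}$ from Proposition \ref{Prop_Variety}, and the connectedness argument in Theorem \ref{Th_UVhomeo}). You correctly flag the irreducibility issue at the boundary; the paper resolves it by showing the set of limit values is a connected subset of a finite variety, hence a singleton. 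Your characterization of $\mathcal{S}_k$ as the level set $\{t(\vec r)=1\}$ is the set $\overline U_1$ in the paper's notation and is homeomorphic to the standard simplex $\{\sum r_a=1\}$ which the paper uses, so the content of (4) is the same up to presentation.

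For (2), however, there is a genuine gap. You propose to deduce $T_{\geq0}\subset\overline V$ from the Edrei--Aissen--Schoenberg--Whitney theorem combined with Schur positivity of $k$-Schur functions. This does not close. EASW characterizes \emph{infinite} totally nonnegative Toeplitz matrices, and finite $(k+1)\times(k+1)$ total nonnegativity is strictly weaker: for example the sequence $(1,1,1,0,\ldots)$ gives a totally nonnegative $3\times3$ unitriangular Toeplitz matrix even though $1+z+z^2$ has complex roots, so the infinite Toeplitz matrix built from it is \emph{not} totally nonnegative. Consequently, total nonnegativity of the finite matrix only controls $\varphi(s_\mu)$ for $\mu$ of maximal hook length $\leq k$ (Lemma \ref{Lemma_closed}), while the Schur-positive expansion $s_\lambda^{(k)}=\sum c_\mu s_\mu$ from Proposition \ref{Prop_positiveExp} typically involves partitions $\mu$ of larger hook length, whose $\varphi$-values you cannot bound this way. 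The paper sidesteps this entirely: it shows $V\subset T_{>0}$ directly, then proves $V$ is both open and closed in the connected set $T_{>0}$ (using the already-established homeomorphism $f:\overline U\to\overline V$), hence $V=T_{>0}$, and passes to closures (Theorem \ref{Th_RiRef}). You would need to replace the EASW/Schur-positivity step by this topological argument or something equivalent.

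For (5), your outline of the law of large numbers via ergodicity of the lifted chain on the finite graph $\mathcal{M}_k$ is correct, and the rationality claim does follow from the primitive-element reduction (Proposition \ref{Prop_rational}). But the assertion that the drift ``factors as a ratio of $\varphi$-values on explicit polynomial combinations of $k$-Schur functions'' is not automatic: the drift involves the stationary measure, i.e.\ the \emph{right} Perron--Frobenius eigenvector $\widehat X$ of $\Phi$, which a priori has no interpretation in terms of the $\varphi(s_\lambda^{(k)})$ (those give the \emph{left} eigenvector $X$). The missing ingredient is the symmetry $(\mathsf I\Omega)\Phi_{(r_1,\ldots,r_k)}(\mathsf I\Omega)=\Phi_{(r_1,\ldots,r_k)}^t$ (Proposition \ref{good_Symmetry}), obtained from the alcove involution $I=t_\rho w_0$ together with $k$-conjugation; this yields $\widehat X(\lambda_A)\propto X(\lambda_{\mathsf I\Omega(A)})$ and is what turns the drift into an explicit $\varphi$-value of a rational expression in $k$-Schur functions. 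Without that symmetry, the stated form of the drift formula does not follow.
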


As in the case of the Young graph, the description of the minimal boundary of the graph $\mathcal{B}_{k}$ yields a parametrization of the set $T_{\geq 0}$ of infinite totally nonnegative unitriangular $(k+1)\times(k+1)$ Toeplitz matrices. In \cite{Ri}, Rietsch already obtained a parametrization for the variety $T_{\geq0}$ from the quantum cohomology of partial
flag varieties (see also \cite{Ri2} for an alternative construction using mirror symmetry). More precisely, such a matrix is proved to be completely
determined by the datum of its $k$ initial minors obtained by considering its
southwest corners. The main result of the present paper gives, as a corollary, an alternative proof of Rietsch's parametrization.
\begin{corollary}
To each $\vec{r}\in\mathbb{R}_{\geq0}^{k}$ corresponds a unique matrix
$M$ in $T_{\geq0}$ whose $k$ southwest initial minors are exactly
$r_{1},\ldots,r_{k}$. This correspondence yields a homeomorphism from $T_{\geq 0}$ to $\mathbb{R}_{\geq0}^{k}$, and each matrix $M$ in $T_{\geq 0}$ can be constructed from $\vec{r}$ by using Perron
Frobenius theorem.
\end{corollary}

The paper is organized as follows. In Section 2, we recall some background on
alcoves, partitions and $k$-Schur functions. In Section 3, we introduce the
matrix $\Phi$ and study its irreducibility. Section 4 uses classical tools of
field theory to derive an expression of any $k$-Schur function in terms of
$s_{(1)}$ and the $s_{R_{a}}$ $a=1,\ldots,k$. We get the parametrization of
all the minimal $t$-harmonic functions defined on $\mathcal{B}_{k}$ by
$\mathbb{R}_{\geq0}^{k}$ in Section 5.\ In Section 6, we give the law of
central random walks on alcoves and compute their drift by exploiting a
symmetry property of the matrix $\Phi$. Finally, Section 7 presents
consequences of our results, notably we rederive Rietsch's result on finite
Toeplitz matrices, establish rational expressions for the $\varphi(s_{\lambda
}^{(k)})$, characterize the simplex $\mathcal{S}_{k}$ and show the inverse
limit of the minimal boundaries of the graphs $\mathcal{B}_{k},k\geq2$ is the
Thomas simplex.

Although we restrict ourselves to type $A$ in this paper, we expect that  
our approach can be extended to other types notably by using the results of
\cite{LSS} and \cite{P} (see Section 8).

\section{Harmonic functions on the lattice of $k$-bounded partitions}

\subsection{The lattices $\mathcal{C}_{l}$ and $\mathcal{B}_{k}$.}

\label{subset_Lattices}In this section, we refer to \cite{LLMSSZ} and
\cite{Mac} for the material which is not defined.\ Fix $l>1$ a nonnegative
integer and set $k=l-1$. Let $\widetilde{W}$ be the affine Weyl group of type
$A_{k}^{(1)}$. As a Coxeter group, $\widetilde{W}$ is generated by the
reflections $s_{0},s_{1},\ldots,s_{k}$ so that its subgroup generated by
$s_{1},\ldots,s_{k}$ is isomorphic to the symmetric group $S_{l}$. Write
$\ell$ for the length function on $\widetilde{W}$. The group $\widetilde{W}$
determines a Coxeter arrangement by considering the hyperplanes orthogonal to
the roots of type $A_{k}^{(1)}$. The connected components of this hyperplane
arrangement yield a tessellation of $\mathbb{R}^{k}$ by alcoves on which the
action of $\widetilde{W}$ is regular.\ We denote by $A^{(0)}$ the fundamental
alcove. Write $\widetilde{R}$ for the set of affine roots of type $A_{k}%
^{(1)}$ and $R$ for its subset of classical roots of type $A_{k}$. The simple
roots are denoted by $\alpha_{0},\ldots,\alpha_{k}$ and $P$ is the weight
lattice of type $A_{k}$ with fundamental weights $\Lambda_{1},\ldots
,\Lambda_{k}$.

A reduced alcove path is a sequence of alcoves $(A_{1},\ldots,A_{m})$ such
that $A_{1}=A^{(0)}$ and for any $i=1,\ldots,m-1$, the alcoves $A_{i+1}$ and
$A_{i}$ share a common face contained in a hyperplane $H_{i}$ so that the
sequence $H_{1},\ldots,H_{m-1}$ is without repetition (each hyperplane can be
crossed only once).\ In the sequel, all the alcove paths we shall consider
will be reduced. For any $i=1,\ldots,m-1$, let $w_{i}$ be the unique element
of $\widetilde{W}$ such that $A_{i}=w_{i}(A^{(0)})$.\ Write $\vartriangleleft$
for the weak Bruhat order on $\widetilde{W}$ and $\rightarrow$ for the
covering relation $w\rightarrow w^{\prime}$ if and only if $w\vartriangleleft
w^{\prime}$ and $\ell(w^{\prime})=\ell(w)+1$. We then have $w_{1}\rightarrow
w_{2}\rightarrow\cdots\rightarrow w_{m}$.

We shall identify a partition and its Young diagram. Recall that a $l$-core
can be seen as a partition where no box has hook length equal to $l$. Given a
$l$-core $\lambda$, we denote by $\ell(\lambda)$ its length which is equal to
the number of boxes of $\lambda$ with hook length less that $l$. Recall that
the residue of a box in a Young diagram is the difference modulo $l$ between
its row and column indices. We can define an arrow $\lambda\overset
{i}{\rightarrow}\mu$ between the two $l$-cores $\lambda$ and $\mu$ when
$\lambda\subset\mu$ and all the boxes in $\mu/\lambda$ have the same residue
$i$. By forgetting the label arrows, we get the structure of a graded rooted
graph $\mathcal{C}_{l}$ on the $l$-cores. For any two vertices $\lambda
\rightarrow\mu$ we have $\ell(\mu)=\ell(\lambda)+1$. Nevertheless, the
difference between the rank of the partitions $\lambda$ and $\mu$ is not
immediate to get in general.

The affine Grassmannian elements are the elements $w\in\widetilde{W}$ whose
associated alcoves are exactly those located in the fundamental Weyl chamber
(that is, in the Weyl chamber containing the fundamental alcove $A^{(0)}%
$).\ The $l$-cores are known to parametrize the affine Grassmannian elements.
More precisely, given two $l$-cores such that $\lambda\overset{i}{\rightarrow
}\mu$ and $w$ the affine Grassmannian element associated to $\lambda$,
$w^{\prime}=ws_{i}$ is the affine Grassmannian element associated to $\mu
$.\ In particular, we get $\ell(\lambda)=\ell(w)$. So reduced alcove paths in
the fundamental Weyl chamber, saturated chains of affine Grassmannian elements
and paths in $\mathcal{C}_{l}$ naturally correspond.

\bigskip

A $k$-bounded partition is a partition $\lambda$ such that $\lambda_{1}\leq
k$. There is a simple bijection between the $l$-cores and the $k$-bounded
partitions. Start with a $l$ core $\lambda$ and delete all the boxes in the
diagram of $\lambda$ having a hook length greater than $l$ (recall there is no
box with hook length equal to $l$ since $\lambda$ is a $l$-core). This gives a
skew shape and to obtain a partition, move each row so obtained on the left.
The result is a $k$-bounded partition denoted $\mathfrak{p}(\lambda)$. For
some examples and the converse bijection $\mathfrak{c}$, see \cite{LLMSSZ}
pages 18 and 19. This bijection permits to define an analogue of conjugation
for the $k$-bounded partitions. Given a $k$-bounded partition $\kappa$ set%
\[
\kappa^{\omega_{k}}=\mathfrak{p}(\mathfrak{c}(\kappa)^{\prime}).
\]
The graph $\mathcal{B}_{k}$ is the image of the graph $\mathcal{C}_{l}$ under
the bijection $\mathfrak{p}$. This means that $\mathcal{B}_{k}$ is the graph
obtained from $\mathcal{C}_{l}$ by deleting all the boxes with hook length
greater that $l$ and next by aligning the rows obtained on the left. In
particular, reduced alcove paths in the fundamental Weyl chamber correspond to
$k$-bounded partitions paths in $\mathcal{B}_{k}$. We have the following lemma:

\begin{lemma}
We have an arrow $\kappa\rightarrow\delta$ in $\mathcal{B}_{k}$ if and only if
$\left\vert \delta\right\vert =\left\vert \kappa\right\vert +1,$
$\kappa\subset\delta$ and $\kappa^{\omega_{k}}\subset\delta^{\omega_{k}}%
$.\footnote{So $\mathcal{B}_{k}$ \emph{should not be confused} with the
subgraph of the Young graph with vertices the $k$-bounded partitions.}
\end{lemma}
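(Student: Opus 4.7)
The plan is to translate the statement through the bijection $\mathfrak{p}:\mathcal{C}_{l}\to\mathcal{B}_{k}$. Since by definition an arrow $\kappa\to\delta$ in $\mathcal{B}_{k}$ corresponds to a residue-labelled cover $\mathfrak{c}(\kappa)\overset{i}{\to}\mathfrak{c}(\delta)$ in $\mathcal{C}_{l}$, it suffices to show that for two $l$-cores $\lambda,\mu$ the existence of a residue $i$ with $\lambda\overset{i}{\to}\mu$ is equivalent to the three conditions on $\kappa=\mathfrak{p}(\lambda)$ and $\delta=\mathfrak{p}(\mu)$. I will treat the two implications separately.

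For the forward direction, assume $\lambda\overset{i}{\to}\mu$. The identity $|\mathfrak{p}(\nu)|=\ell(\nu)$ (which follows from the description of $\mathfrak{p}$ as the operation of removing boxes with hook length greater than $l$ and left-justifying) combined with $\ell(\mu)=\ell(\lambda)+1$ immediately yields $|\delta|=|\kappa|+1$. For $\kappa\subset\delta$, I would argue row by row: $\mu$ is obtained from $\lambda$ by adding a collection of boxes all of residue $i$ without removing any, and a direct hook length count in each row shows that the number of boxes with hook length less than $l$ in that row does not decrease, giving $\kappa_{r}\leq\delta_{r}$ for every $r$. The inclusion $\kappa^{\omega_{k}}\subset\delta^{\omega_{k}}$ then follows by symmetry, using that conjugation preserves the $l$-core condition and sends a residue-$i$ cover in $\mathcal{C}_{l}$ to a residue-$(-i \bmod l)$ cover, so the same row-by-row analysis applied to $\mathfrak{p}(\lambda')$ and $\mathfrak{p}(\mu')$ produces the required inclusion of conjugates.

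The reverse direction is the more delicate part. Assuming the three conditions, the size condition gives $\ell(\mu)=\ell(\lambda)+1$. I would then unpack the two inclusions $\kappa\subset\delta$ and $\kappa^{\omega_{k}}\subset\delta^{\omega_{k}}$ to conclude first that $\lambda\subset\mu$ (the row inclusion controls which rows can grow, the conjugate inclusion controls column growth, and together they preclude any existing box being removed), and second that every box of $\mu/\lambda$ carries the same residue. The cleanest way I see to establish the residue-homogeneity is to pass to the affine Grassmannian side: the two inclusions translate respectively into left-descent and right-descent compatibilities for the associated affine Grassmannian elements $w_{\kappa},w_{\delta}\in\widetilde{W}$, and combined with the length condition $\ell(w_{\delta})=\ell(w_{\kappa})+1$ they force a weak-order cover $w_{\delta}=w_{\kappa}s_{i}$, which pulls back precisely to the residue-$i$ cover $\lambda\overset{i}{\to}\mu$ in $\mathcal{C}_{l}$.

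The main obstacle is precisely this residue-homogeneity step of the reverse direction: a priori, two $l$-cores comparable in inclusion and whose conjugates are also comparable, with $\ell$-difference one, could conceivably have their skew shape involve boxes of several distinct residues modulo $l$. Ruling out this mixing requires either a careful direct hook length argument exploiting that both $\lambda$ and $\mu$ are $l$-cores (no box has hook length exactly $l$), or the more structural appeal to the weak order on affine permutations sketched above. The remaining steps of the argument reduce to routine combinatorics, and the true content of the lemma is this rigidity phenomenon tying row and column inclusion on $k$-bounded partitions to residue homogeneity on $l$-cores.
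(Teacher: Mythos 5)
The paper recalls this lemma without proof as background from the reference \cite{LLMSSZ}, so there is no author's argument to compare against and your sketch has to be judged on its own merits. Its architecture is right --- transport the question to $l$-cores via $\mathfrak{c}$ and $\mathfrak{p}$, handle the two implications separately, use conjugation symmetry --- and you correctly isolate the crux of the reverse direction, namely forcing every cell of $\mu/\lambda$ to carry the same residue. The forward direction you outline (size equality from $|\mathfrak{p}(\nu)|=\ell(\nu)$, row containment from a hook-length count, conjugate containment by symmetry) is plausible, if compressed.

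The reverse direction, however, contains a genuine gap. Core containment $\lambda\subset\mu$ together with $\ell(\mu)=\ell(\lambda)+1$ does \emph{not} by itself give a residue arrow: for $l=3$ take $\lambda=(1,1)\subset\mu=(3,1)$; then $\ell(\mu)=3=\ell(\lambda)+1$, yet the two cells of $\mu/\lambda$ have residues $1$ and $2$. This corresponds to a strong (Bruhat) cover $w_\mu=w_\lambda s_\alpha$ with $\alpha$ a non-simple affine reflection, and the whole content of the lemma is that the conjugate hypothesis forces $\alpha$ to be simple. Indeed here $\kappa=(1,1)$, $\delta=(2,1)$ satisfy $\kappa\subset\delta$ but $\kappa^{\omega_2}=(2)\not\subset(1,1,1)=\delta^{\omega_2}$, so the lemma correctly excludes the arrow. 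Thus after you extract $\lambda\subset\mu$ --- itself a nontrivial step, since $\mathfrak{c}$ is not containment-monotone and your parenthetical ``they preclude any existing box being removed'' would need a real argument --- residue-homogeneity does \emph{not} follow from the length count alone; it must use the conjugate inclusion a second time. Your phrase ``left-descent and right-descent compatibilities'' gestures at the right mechanism but is not an argument: you would need a precise dictionary between $\kappa\subset\delta$ (resp.\ $\kappa^{\omega_k}\subset\delta^{\omega_k}$) and the two weak orders on $\widetilde{W}$, together with a proof that both, combined with $\ell(w_\delta)=\ell(w_\kappa)+1$, force a weak cover $w_\delta=w_\kappa s_i$. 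As written, the key claims of the reverse direction are asserted rather than proved.
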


Let $\Lambda$ be\ the algebra of symmetric functions in infinitely many
variables over $\mathbb{R}$. It is endowed with a scalar product $\langle
\cdot,\cdot\rangle$ such that $\langle s_{\lambda},s_{\mu}\rangle
=\delta_{\lambda,\mu}$ for any partitions $\lambda$ and $\mu$. Let
$\Lambda_{(k)}$ be the subalgebra of $\Lambda$ generated by the complete
homogeneous functions $h_{1},\ldots,h_{k}$. In particular, $\{h_{\lambda}%
\mid\lambda$ is $k$-bounded$\}$ is a basis of $\Lambda_{(k)}$.

\subsection{The $k$-Schur functions}

We now define a distinguished basis of $\Lambda_{(k)}$ related to the graph
structures of $\mathcal{C}_{l}$ and $\mathcal{B}_{k}$. Consider $\lambda$ and
$\mu$ two $k$-bounded partitions with $\lambda\subset\mu$ and $r\leq k$ a
positive integer.

\begin{definition}
\label{Def_Hori_Strip}We will say that $\mu/\lambda$ is a weak horizontal
strip of size $r$ when

\begin{enumerate}
\item $\mu/\lambda$ is an horizontal strip with $r$ boxes (i.e. the boxes in
$\mu/\lambda$ belong to different columns),

\item $\mu^{\omega_{k}}/\lambda^{\omega_{k}}$ is a vertical strip with $r$
boxes (i.e. the boxes in $\mu^{\omega_{k}}/\lambda^{\omega_{k}}$ belong to
different rows).
\end{enumerate}
\end{definition}

Let us now define the notion of $k$-bounded semistandard tableau of shape
$\lambda$ a $k$-bounded partition and weight $\alpha=(\alpha_{1},\ldots
,\alpha_{d})$ a composition of $\left\vert \lambda\right\vert $ \emph{with no
part larger than }$k$.\ 

\begin{definition}
A $k$-bounded semistandard tableau of shape $\lambda$ is a semistandard
filling of $\lambda$ with integers in $\{1,\ldots,d\}$ such that for any
$i=1,\ldots,d$ the boxes containing $i$ define a weak horizontal strip of size
$\alpha_{i}$.
\end{definition}

One can prove that for any $k$-bounded partitions $\lambda$ and $\alpha$ the
number $K_{\lambda,\alpha}^{(k)}$ of $k$-bounded semistandard tableaux of
shape $\lambda$ and weight $\alpha$ verifies%
\[
K_{\lambda,\lambda}^{(k)}=1\text{ and }K_{\lambda,\alpha}^{(k)}\neq
0\Longrightarrow\alpha\leq\lambda
\]
where $\leq$ is the dominant order on partitions.

\begin{definition}
The $k$-Schur functions $s_{\kappa}^{(k)},\kappa\in\mathcal{B}_{k}$ are the
unique functions in $\Lambda_{(k)}$ such that
\[
h_{\delta}=\sum_{\delta\leq\kappa,\kappa\in\mathcal{B}_{k}}K_{\kappa,\delta
}^{(k)}s_{\kappa}^{(k)}%
\]
for any $\delta$ in $\mathcal{B}_{k}$.
\end{definition}

\begin{proposition}
[Pieri rule for $k$-Schur functions]For any $r\leq k$ and any $\kappa
\in\mathcal{B}_{k}$ we have
\begin{equation}
h_{r}s_{\kappa}^{(k)}=\sum_{\varkappa\in\mathcal{B}_{k}}s_{\varkappa}^{(k)}
\label{PieriKSchur}%
\end{equation}
where the sum is over all the $k$-bounded partitions $\varkappa$ such that
$\varkappa/\kappa$ is a weak horizontal strip of size $r$ in $\mathcal{B}_{k}$.
\end{proposition}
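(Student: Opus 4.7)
The plan is to reduce (\ref{PieriKSchur}) to a combinatorial identity on the $k$-Kostka numbers via the triangular definition of the $k$-Schur functions, and then to prove that identity bijectively by erasing the letter $1$ in $k$-bounded tableaux. Concretely, let $c^{\varkappa}_{r,\kappa}$ be the unique coefficients in the $k$-Schur basis with $h_{r}s_{\kappa}^{(k)}=\sum_\varkappa c^{\varkappa}_{r,\kappa}\,s_{\varkappa}^{(k)}$. Multiplying the defining identity $h_{\delta}=\sum_\kappa K^{(k)}_{\kappa,\delta}\,s_{\kappa}^{(k)}$ by $h_{r}$, using $h_{r}h_{\delta}=h_{(r,\delta)}$ (writing $(r,\delta)$ for the composition obtained by prepending $r$ to $\delta$), and equating $s_{\varkappa}^{(k)}$-coefficients yields
\[
K^{(k)}_{\varkappa,(r,\delta)}\;=\;\sum_{\kappa}c^{\varkappa}_{r,\kappa}\,K^{(k)}_{\kappa,\delta}.
\]
Since the $K^{(k)}$-matrix is unitriangular in dominance order, these relations, taken over all $k$-bounded $\delta$, determine the $c^{\varkappa}_{r,\kappa}$ uniquely. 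Consequently (\ref{PieriKSchur}) is equivalent to
\[
K^{(k)}_{\varkappa,(r,\delta)}\;=\;\sum_{\kappa:\ \varkappa/\kappa\ \text{WHS of size}\ r} K^{(k)}_{\kappa,\delta}
\]
for every $k$-bounded $\varkappa$ and every $k$-bounded composition $\delta$.

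To establish this identity I would exhibit a weight-preserving bijection between $k$-bounded SSYTs of shape $\varkappa$ and weight $(r,\delta)$ on one side, and pairs $(\kappa,T')$ on the other side, where $\kappa\subset\varkappa$ satisfies $\varkappa/\kappa$ is a weak horizontal strip of size $r$ and $T'$ is a $k$-bounded SSYT of shape $\kappa$ and weight $\delta$. The map sends a tableau $T$ of shape $\varkappa$ to the tableau obtained by erasing the $r$ boxes of $T$ filled with $1$ (which form, by semistandardness, a horizontal strip at the top of $\varkappa$) and decrementing the remaining entries; the inverse increments the entries of $T'$ by one and fills the boxes of the strip $\varkappa/\kappa$ with $1$'s. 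In one direction the $k$-bounded SSYT property of $T'$ follows from that of $T$, and in the other direction from the weak-horizontal-strip hypothesis on $\varkappa/\kappa$.

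The main obstacle is the \emph{weak} part of the condition on $\varkappa/\kappa$: namely, verifying that the skew shape produced by erasing the $1$'s automatically satisfies $\varkappa^{\omega_{k}}/\kappa^{\omega_{k}}$ being a vertical strip of size $r$, and conversely that this condition is what is needed to reinsert the $1$'s consistently. The ordinary horizontal strip condition is immediate from semistandardness, but the $\omega_k$-conjugate condition is not visible on the $k$-bounded side. I would treat it by transporting the problem to the $l$-core side through the bijection $\mathfrak{c}$ of Section~\ref{subset_Lattices}: each cover $\kappa^{(j-1)}\to\kappa^{(j)}$ in $\mathcal{B}_{k}$ arising from the successive addition of the $r$ boxes labelled $1$ lifts to a residue-labelled arrow $\mathfrak{c}(\kappa^{(j-1)})\overset{i_{j}}{\to}\mathfrak{c}(\kappa^{(j)})$ in $\mathcal{C}_{l}$, and the fact that the added boxes occupy distinct columns, combined with the residue combinatorics of $l$-cores, forces the $i_{j}$ to be pairwise distinct. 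Reading this back through $\mathfrak{p}$ is exactly the statement that $\varkappa^{\omega_{k}}/\kappa^{\omega_{k}}$ is a vertical strip, and the converse goes the same way: a WHS of size $r$ corresponds on the $l$-core side to $r$ successive box additions with pairwise distinct residues, which can always be interpreted as an admissible ``$r$-strip of $1$'s'' in a $k$-bounded SSYT. This is the sole genuinely $k$-affine ingredient in the argument.
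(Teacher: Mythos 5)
The paper states this Pieri rule without proof; it is quoted as background from the $k$-Schur literature (Lapointe--Lascoux--Morse, Lapointe--Morse; cf.\ \cite{LLMSSZ}), so there is no in-paper argument to compare with. Judged on its own terms, your skeleton is reasonable --- translate the Pieri rule into a recursion on the $k$-Kostka numbers via unitriangularity and try to prove that recursion bijectively --- but there are two real gaps.

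First, the bijection does not work in the direction you describe. In a column-strict $k$-bounded SSYT, the boxes filled with $1$ are exactly the boxes $(1,1),\ldots,(1,r)$, i.e.\ the \emph{inner} partition $(r)\subset\varkappa$. Erasing them and decrementing yields a filling of the \emph{skew} shape $\varkappa/(r)$, not a straight-shape $k$-bounded SSYT of some $\kappa$ with $\varkappa/\kappa$ a weak horizontal strip; and $\kappa=\varkappa\setminus\{1\text{'s}\}$ is not even a Young diagram. The correct bijection peels off the \emph{largest} letter: by the (corrected, see below) definition of $k$-bounded SSYT its boxes form a weak horizontal strip $\varkappa/\kappa$ at the outer boundary, $T'$ then has straight shape $\kappa$ and weight $\delta$, and the bijection is tautological. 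With this correction your ``main obstacle'' paragraph becomes moot, since the weak horizontal strip condition is built into the definition and needs no verification by transporting to $l$-cores.

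Second, and more seriously, the step
\[
h_{(r,\delta)}=\sum_{\varkappa}K^{(k)}_{\varkappa,(r,\delta)}\,s_{\varkappa}^{(k)}
\]
requires the defining relation to hold for \emph{compositions} $(r,\delta)$, whereas it is stated only for $k$-bounded \emph{partitions}. This is precisely the symmetry $K^{(k)}_{\varkappa,\alpha}=K^{(k)}_{\varkappa,\mathrm{sort}(\alpha)}$, a genuinely nontrivial theorem of Lapointe--Morse, and it is equivalent to the commutativity of the Pieri operators $u_r\colon s_\kappa^{(k)}\mapsto\sum s_\varkappa^{(k)}$ (sum over $\varkappa/\kappa$ a weak horizontal strip of size $r$). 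That commutativity is the actual content of the Pieri rule; once granted, the rest of your argument is a formal consequence. Your proposal leaves it entirely unaddressed: the residue argument you sketch on the core side concerns a \emph{single} application of $u_r$ (distinct residues for the boxes of one weak strip), not the interchange of two of them. Without a $k$-analogue of the Bender--Knuth involution, or an explicit appeal to Lapointe--Morse's symmetry theorem, the proof is incomplete.

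A small remark that is relevant to your argument: the definition of ``$k$-bounded semistandard tableau'' in the text says ``horizontal strip'', which is automatic for any SSYT and so imposes no constraint; it must be read as ``\emph{weak} horizontal strip'' for the $k$-Kostka numbers to differ from the ordinary Kostka numbers and for the corrected bijection to work.
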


When $r=1$, the multiplication by $h_{1}$ is easily described by considering
all the possible $k$-bounded partitions at distance $1$ from $\kappa$ in
$\mathcal{B}_{k}$. Thanks to a geometric interpretation of the $k$-Schur
functions in terms of the homology of affine Grassmannians, Lam showed that
the product of two $k$-Schur functions is $k$-Schur positive:

\begin{theorem}
(\cite{LamLR}, Corollary 8.2) Given $\kappa$ and $\delta$ two $k$-bounded partitions, we have
\[
s_{\kappa}^{(k)}s_{\delta}^{(k)}=\sum_{\nu\in\mathcal{B}_{k}}c_{\lambda
,\delta}^{\nu(k)}s_{\nu}^{(k)}%
\]
with $c_{\lambda,\delta}^{\nu(k)}\in\mathbb{Z}_{\geq0}$.
\end{theorem}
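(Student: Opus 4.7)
The plan is to follow Lam's geometric strategy: transport the product of $k$-Schur functions to a product of Schubert classes in a space where positivity is manifest from geometry, namely the affine Grassmannian $\mathrm{Gr}_{SL_l}$ associated to the affine Weyl group $\widetilde{W}$ of type $A_k^{(1)}$. Because the $k$-Schur basis is characterized (via the definition above) by its relation to the complete homogeneous basis through the matrix $K^{(k)}$, the result is essentially a statement about a \emph{specific} ring structure on $\Lambda_{(k)}$, and proving it amounts to exhibiting an isomorphism of rings onto something where positivity is known.

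First I would set up the algebraic model. Let $T \subset SL_l$ be a maximal torus and consider the affine Grassmannian $\mathrm{Gr} = SL_l(\mathbb{C}((t)))/SL_l(\mathbb{C}[[t]])$, whose Schubert varieties $X_w$ are indexed by the affine Grassmannian elements $w \in \widetilde{W}/W$, i.e.\ by the $k$-bounded partitions via $\kappa \leftrightarrow w_\kappa$ (Section~\ref{subset_Lattices}). The Schubert classes $\{\xi_w\}$ form a $\mathbb{Z}$-basis of $H_*(\mathrm{Gr};\mathbb{Z})$, and the Pontryagin product (coming from the based loop space structure $\mathrm{Gr} \simeq \Omega SU(l)$) equips $H_*(\mathrm{Gr};\mathbb{Z})$ with a commutative graded ring structure. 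The key algebraic step, due to Bott and refined by Lam, is the construction of a graded Hopf algebra isomorphism
\[
\Psi : \Lambda_{(k)} \xrightarrow{\ \sim\ } H_*(\mathrm{Gr};\mathbb{Z})
\]
that identifies the generator $h_r$ with a distinguished Schubert class and, more importantly, sends $s_\kappa^{(k)}$ to $\xi_{w_\kappa}$ for every $\kappa \in \mathcal{B}_k$. One proves this by showing that the pullbacks of Schubert classes satisfy the same Pieri rule as the one in Proposition~\ref{PieriKSchur} against $h_1,\ldots,h_k$; together with the unitriangular change of basis $h_\delta = \sum K_{\kappa,\delta}^{(k)} s_\kappa^{(k)}$, this uniquely pins down the image $\Psi(s_\kappa^{(k)}) = \xi_{w_\kappa}$.

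With $\Psi$ in hand, the product $s_\kappa^{(k)} s_\delta^{(k)}$ becomes the Pontryagin product $\xi_{w_\kappa} \cdot \xi_{w_\delta}$, and it remains to show that the Schubert structure constants
\[
\xi_{w_\kappa} \cdot \xi_{w_\delta} = \sum_{\nu} c_{\kappa,\delta}^{\nu(k)}\,\xi_{w_\nu}
\]
lie in $\mathbb{Z}_{\geq 0}$. The cleanest route is to lift to $T$-equivariant homology $H_*^T(\mathrm{Gr})$, where the Schubert classes $\xi_w^T$ are defined over $H_T^*(\mathrm{pt}) = \mathbb{Z}[\alpha_1,\ldots,\alpha_k]$, and to invoke the positivity theorem of Graham (extended to the Kac-Moody setting by Kumar and by Lam-Shimozono): the structure constants in the equivariant Schubert basis are polynomials in the simple roots with non-negative integer coefficients. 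Specializing the equivariant parameters to zero gives back the ordinary Schubert product and preserves non-negativity, which yields $c_{\kappa,\delta}^{\nu(k)} \in \mathbb{Z}_{\geq 0}$.

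The main obstacle is the positivity step itself: even granting the identification $\Psi$ (which is a careful but essentially algebraic/Hopf-theoretic computation via the nil-Hecke ring of Kostant-Kumar), the non-negativity of Schubert structure constants on the \emph{homology} side of a Kac-Moody flag variety is not at all formal, and must be extracted either from Graham-type equivariant positivity or from a geometric interpretation of the Pontryagin product via moduli of quasi-maps into $\mathrm{Gr}$. This is precisely the unavoidable geometric input acknowledged in the introduction, and bypassing it would require a combinatorial $k$-Littlewood-Richardson rule, which, as the authors note, is still missing.
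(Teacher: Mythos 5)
The paper does not prove this theorem: it simply cites Lam's result \cite{LamLR}, with the one-line attribution that the proof rests on the interpretation of $k$-Schur functions as Schubert classes in the homology of the affine Grassmannian. Your outline is a faithful and essentially accurate summary of exactly that geometric argument (the Bott/Lam Hopf-isomorphism $\Lambda_{(k)}\simeq H_*(\mathrm{Gr})$, the matching $s_\kappa^{(k)}\mapsto\xi_{w_\kappa}$ via the Pieri rule, and positivity of the Schubert structure constants via Graham/Kumar-type equivariant positivity or Peterson's quantum-cohomology comparison), so you are taking the same route the paper points to.
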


\subsection{Some properties of $k$-Schur functions}

The $k$-conjugation operation $\omega_{k}$ can be read directly at the level
of $k$-bounded partitions without using the ordinary conjugation operations on
the $l$-cores (see (1.9)) in \cite{LLMSSZ}). To do this, start with a
$k$-bounded partition $\lambda=(\lambda_{1},\ldots,\lambda_{r})$ and decompose
it into its chains $\{c_{1},c_{2},\ldots,c_{r}\}$ where each chain is a
sequence of parts of $\lambda$ obtained recursively as follows. The procedure
is such that any part $\lambda_{i}$ is in the same chain as the part
$\lambda_{i+k-\lambda_{i}+1}$ when $i+k-\lambda_{i}+1\leq r$ (from the part
$\lambda_{i}$ one jumps $k-\lambda_{i}$ parts to get the following part of the
chain). Observe in particular that all the parts with length $k$ belong to the
same chain for in this case we jump $0$ parts. Once the chains $c_{i}$ are
determined, $\lambda^{\omega_{k}}$ is the partition with $k$-columns whose
lengths are the sums of the $c_{i}$'s$.$

\begin{example}
Consider the $5$-partition $\lambda=(\boldsymbol{5,5,5,4},$\textsl{4}%
$,\boldsymbol{3},$\textsl{3}$,3,\boldsymbol{2},$\textsl{2}$,1)$. Then, we get
$c_{1}=\{5,5,5,4,3,2\}$ next $c_{2}=\{4,3,2\},$ $c_{3}=\{3,1\},$
$c_{4}=\emptyset$ and $c_{5}=\emptyset$. So $\lambda^{\omega_{5}}$ is the
partition with columns of heights $24,$ $9$ and $4$.
\end{example}

The following facts will be useful.

\begin{enumerate}
\item Any partition $\lambda$ of rank at most $k$ is a $k$-bounded partition
and is then equal to its associated $l$-core (because $\lambda$ has no hook of
length $l=k+1$).

\item The lattice $\mathcal{B}_{k}$ coincides with the ordinary Young lattice
on the partitions of rank at most $k$. On this subset $\omega_{k}$ is the
ordinary conjugation.

\item For any partition $\lambda$ of rank at most $k,$ the $k$-Schur function
coincides with the ordinary Schur function that is $s_{\lambda}^{(k)}%
=s_{\lambda}$. In particular, the homogeneous functions $h_{1},\ldots,h_{k}$
and the elementary functions $e_{1},\ldots,e_{k}$ are the $k$-Schur functions
corresponding to the rows and columns partitions with at most $k$ boxes, respectively.
\end{enumerate}

The $k=2$ case is easily tractable because the lattice of $2$-bounded
partitions we consider has a simple structure. One verifies easily that for
any $2$-bounded partition $\lambda=(2^{a},1^{n-2a})$, we get in that case
\[
s_{\lambda}^{(2)}=\left\{
\begin{array}
[c]{l}%
h_{2}^{a}e_{2}^{\frac{n}{2}-a}\text{ when }n\text{ even,}\\
h_{2}^{a}e_{2}^{\frac{n-1}{2}-a}e_{1}\text{ when }n\text{ is odd.}%
\end{array}
\right.
\]
When $k>2,$ the structure of the graph $\mathcal{B}_{k}$ becomes more
complicated. Given a $k$-bounded partition $\lambda$ one can first precise
where it is possible to add a box in the Young diagram of $\lambda$ to get an
arrow in $\mathcal{B}_{k}$. Assume we add a box on the row $\lambda_{i}$ of
$\lambda$ to get the $k$-bounded partition $\mu$, denote by $c=\{\lambda
_{i_{1}},\ldots,\lambda_{i_{r}}\}$ the chain containing $\lambda_{i}$ where we
have $\lambda_{i}=\lambda_{i_{a}}$ with $a\in\{1,\ldots,r\}$.\ Observe we can
add components equal to zero to $c$ if needed since $\lambda$ is defined up to
an arbitrary number of zero parts. The following lemma permits to avoid the
use of $\omega_{k}$ in the construction of $\mathcal{B}_{k}$.

\begin{lemma}
\label{conditionArrow} There is an arrow $\lambda\rightarrow\mu$ in
$\mathcal{B}_{k}$ if and only if $\lambda_{i_{b}-1}=\lambda_{i_{b}}$ for any
$b=a+1,\ldots,r,$ that is if and only if each part located up to $\lambda_{i}$
in the chain containing $\lambda_{i}$ is preceded by a part with the same size.
\end{lemma}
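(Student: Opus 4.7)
My approach will be to reduce the arrow condition to a statement about chain sums, then verify it by tracking how the chain decomposition changes when a single box is added. Since $\lambda\subset\mu$ and $|\mu|=|\lambda|+1$ are automatic from the assumption that $\mu$ is obtained from $\lambda$ by adjoining one box at row $i$, the lemma reduces to characterizing when $\lambda^{\omega_k}\subset\mu^{\omega_k}$. By the description of $\omega_k$ recalled before the lemma, $\nu^{\omega_k}$ is the partition whose columns are the chain sums of $\nu$. Because $|\mu^{\omega_k}|-|\lambda^{\omega_k}|=1$, the containment is equivalent to the sorted sequence of chain sums of $\mu$ being obtained from that of $\lambda$ by incrementing exactly one entry by $1$. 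Thus it suffices to show that the given combinatorial condition is equivalent to the chain decomposition of $\mu$ differing from that of $\lambda$ only by replacing $\lambda_{i_a}$ by $\lambda_{i_a}+1$ inside $c_\alpha$.

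For the ``if'' direction I would trace the chain of $\mu$ through $i_a$. The increment $\lambda_i\mapsto\lambda_i+1$ shrinks the jump from $i_a$ by one, so the chain reaches $i_{a+1}-1$ instead of $i_{a+1}$. The hypothesis $\lambda_{i_{a+1}-1}=\lambda_{i_{a+1}}$ ensures the value at that row is $\lambda_{i_{a+1}}$, and the subsequent jump has the same length as in $\lambda$, landing on $i_{a+2}-1$. Iterating, and using the convention of adjoining zero parts to $c_\alpha$ whenever the chain runs past the support of $\lambda$, the chain of $\mu$ visits rows $i_{a+1}-1,\dots,i_r-1$ with values identical to $\lambda_{i_{a+1}},\dots,\lambda_{i_r}$. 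Hence this chain coincides with $c_\alpha$ except that $\lambda_{i_a}$ is replaced by $\lambda_{i_a}+1$, and its sum increases by one. Every row lying outside this updated chain is unchanged from $\lambda$ to $\mu$, so the greedy chain construction produces the same remaining chains; this gives the required controlled change of the chain-sum multiset, hence $\lambda^{\omega_k}\subset\mu^{\omega_k}$.

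For the converse I would suppose the condition fails at a smallest index $b_0>a$, i.e.\ $\lambda_{i_{b_0}-1}>\lambda_{i_{b_0}}$ (possibly with $\lambda_{i_{b_0}}=0$ coming from the zero-part extension). The same iteration shows that the chain of $\mu$ through $i_a$ reaches $i_{b_0}-1$ with a strictly larger value than $\lambda_{i_{b_0}}$; the following jump is shorter than the original one in $c_\alpha$, and the chain of $\mu$ swallows rows that belonged to another chain $c_\beta$ of $\lambda$. I would then argue that this reorganization always strictly decreases some position of the sorted chain-sum vector, breaking $\lambda^{\omega_k}\subset\mu^{\omega_k}$.

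The main obstacle is precisely this converse: after the first mismatch the structure of $\mu$'s chains can cascade (successive mergers, truncations at the end of the partition, further interactions with the zero padding), and one must ensure that every such reorganization produces a deficit somewhere in the sorted chain-sum sequence rather than merely redistributing mass. I would attempt an induction on the number of chains merged into $c_\alpha$ by the modification, using at each step that the sum absorbed by the enlarged chain exceeds one, so that even though the leading chain-sum rises by more than one, a later chain must be shortened by a compensating amount, which produces the required violation of the partition inclusion.
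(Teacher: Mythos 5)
Your overall plan---reduce the condition to a statement about the multiset of chain sums of $\lambda$ and $\mu$, then check that it changes by a single increment---is the right one, and is essentially a fleshed-out version of what the paper's one-line proof asserts. However, there are two genuine problems with the write-up.

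First, in the forward direction, the sentence ``Every row lying outside this updated chain is unchanged from $\lambda$ to $\mu$, so the greedy chain construction produces the same remaining chains'' is false as stated. Passing from $\lambda$ to $\mu$ changes the successor of exactly one row, namely $i_a$: it now points to $i_{a+1}-1$ instead of $i_{a+1}$. Since the successor map of a partition is strictly increasing (hence injective), the row $i_{a+1}-1$ has no predecessor in $\lambda$, so it is the start of some chain $c_\beta$. In $\mu$, the chain through $i_a$ absorbs all of $c_\beta$, while the tail $\{i_{a+1},i_{a+2},\dots\}$ (formerly the end of $c_\alpha$) splits off as a new chain. So the ``remaining chains'' are \emph{not} the same: $c_\beta$ disappears and is replaced by $\{i_{a+1},\dots\}$. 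The new chain has sum $B:=\sum_{b>a}\lambda_{i_b}$, and $c_\beta$ had sum $B':=\sum_{b>a}\lambda_{i_b-1}$. That $B'=B$ is exactly where the hypothesis $\lambda_{i_b-1}=\lambda_{i_b}$ enters, and this identification is missing from your argument.

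Second, the converse is left unfinished, and the difficulty you flag---cascading mergers---does not actually arise. As just explained, the only chains of $\lambda$ touched by the box addition are $c_\alpha$ and $c_\beta$; every other chain of $\lambda$ is a chain of $\mu$ with the same sum. After discarding the common chains, the comparison reduces to whether the sorted pair $(A+1+B',\,B)$ dominates the sorted pair $(\max(A+B,B'),\,\min(A+B,B'))$, where $A=\sum_{b\le a}\lambda_{i_b}$. The key inequality you are missing is that, for the rows $j_1=i_{a+1}-1,j_2,\dots$ of $c_\beta$, an easy induction on the recursion $j_{c+1}=j_c+k-\lambda_{j_c}+1$ gives $j_c\le i_{a+c}-1$ and hence $\lambda_{j_c}\ge\lambda_{i_{a+c}}$ for all $c$; therefore $B'\ge B$, with equality precisely when $\lambda_{i_{a+c}-1}=\lambda_{i_{a+c}}$ for every $c$, i.e.\ the hypothesis holds (read with the zero-extension convention, which is what makes the boundary check $\lambda_{i_{r}-1}=\lambda_{i_r}=0$ part of the condition). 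When $B'>B$ and $A\ge1$, the second entry $\min(A+B,B')$ strictly exceeds $B$, so domination fails and $\lambda^{\omega_k}\not\subset\mu^{\omega_k}$; the degenerate case $A=0$ forces the chain through $i_a$ to consist entirely of zero parts, which makes $B'=B=0$ automatically. With this inequality in hand the converse closes in a few lines---no induction on the number of merged chains is needed.
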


\begin{proof}
One verifies that if the previous condition is not satisfied, $\mu^{\omega
_{k}}$ and $\lambda^{\omega_{k}}$ will differ by at least two boxes and if it
is satisfied by only one as desired.
\end{proof}

\bigskip

\begin{example}
\ 

\begin{enumerate}
\item One can always add a box on the first column of $\lambda$ since the
parts located up to the part $0$ are all equal to $0$.

\item Assume $k=2$, then we can add a box on the part $\lambda_{i}$ equal to
$1$ to get a part equal to $2$ if and only if there is an even number of parts
equal to $1$ up to $\lambda_{i}$. This is equivalent to says that $\lambda$
has an odd number of parts equal to $1$, that is that the rank of $\lambda$ is
odd since the other parts are equals to $2$ (or $0$).
\end{enumerate}
\end{example}

\bigskip

For any $a=1,\ldots,k$, let $R_{a}$ be the rectangle partition $(k-a+1)^{a}$.
The previous observations can be generalized (see \cite{LamLR} Corollary 8.3 and \cite{LLMSSZ} Section 4):

\begin{proposition}
\ 

\begin{enumerate}
\item Assume $\lambda$ is a $k$-bounded partition which is also a
$(k+1)$-core. Then $s_{\lambda}^{(k)}=s_{\lambda}$ (that is the $k$-Schur and
the Schur functions corresponding to $\lambda$ coincide).

\item In particular, for any rectangle partition $R_{a}$, we have $s_{R_{a}%
}^{(k)}=s_{R_{a}}$.

\item For any $a=1,\ldots,k$ and any $k$-bounded partition $\lambda$ we have
\[
s_{R_{a}}s_{\lambda}^{(k)}=s_{\lambda\cup R_{a}}^{(k)}%
\]
where $\lambda\cup R_{a}$ is obtained by adding $a$ times a part $k-a+1$ to
$\lambda$.
\end{enumerate}
\end{proposition}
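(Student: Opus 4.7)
The strategy is to establish (1) first, deduce (2) as an immediate consequence for rectangles, and then derive (3) from (2) by combining the Jacobi--Trudi expression of $s_{R_a}$ with an iterated application of the $k$-Pieri rule \eqref{PieriKSchur}.

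For (1), the hypothesis implies that $\lambda$ has no hook of length exceeding $k$, so $\lambda_1+\ell(\lambda)\leq k+1$. Consequently every entry $h_{\lambda_i-i+j}$ appearing in the Jacobi--Trudi determinantal formula for $s_\lambda$ has index at most $k$, which shows that $s_\lambda$ lies in the subalgebra $\Lambda_{(k)}=\mathbb{R}[h_1,\ldots,h_k]$. To identify $s_\lambda$ with $s_\lambda^{(k)}$, I would then appeal to the unique triangular characterization: $s_\lambda^{(k)}$ is the unique element of $\Lambda_{(k)}$ satisfying $h_\delta=\sum_{\kappa\in\mathcal{B}_k,\,\kappa\geq\delta}K_{\kappa,\delta}^{(k)}s_\kappa^{(k)}$ for every $k$-bounded $\delta$. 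The key combinatorial fact to verify is $K_{\lambda,\alpha}^{(k)}=K_{\lambda,\alpha}$ for all weights $\alpha$; this reduces to showing that, under the stated hypothesis on $\lambda$, every ordinary SSYT of shape $\lambda$ is automatically a $k$-bounded tableau, because the $\omega_k$-conjugate coincides with the ordinary conjugate on every intermediate shape $\mu\subseteq\lambda$ arising in the construction, rendering the ``vertical strip'' part of the weak horizontal strip condition (Definition \ref{Def_Hori_Strip}) redundant.

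Item (2) is immediate: the rectangle $R_a=(k-a+1)^a$ is $k$-bounded, and its principal hook has length $(k-a+1)+a-1=k$, so $R_a$ has no hook of length $k+1$ and is therefore a $(k+1)$-core.

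For (3), thanks to (2) the identity becomes $s_{R_a}^{(k)}s_\lambda^{(k)}=s_{\lambda\cup R_a}^{(k)}$. The plan is to expand $s_{R_a}$ via Jacobi--Trudi as a signed sum of products $h_{\beta_1}\cdots h_{\beta_a}$ with each $\beta_i\leq k-a+1$, apply the $k$-Pieri rule \eqref{PieriKSchur} to each factor, and carry out the iterated expansion. Equivalently, and more cleanly, I would construct a weight-preserving bijection between $k$-tableaux of shape $\lambda\cup R_a$ and pairs (SSYT of shape $R_a$, $k$-tableau of shape $\lambda$). The main obstacle is precisely proving that this matching exists and exhausts both sides: the rectangular shape of $R_a$ is crucial, as it forces all additions of $|R_a|$ boxes to $\lambda$ whose shape is not a translate of $R_a$ to cancel against other terms in the determinantal expansion. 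This delicate combinatorial cancellation is the essence of the Lapointe--Morse rectangle rule and is the main technical step of the argument.
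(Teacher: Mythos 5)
The paper does not prove this proposition; it simply cites \cite{LLMSSZ} immediately before stating it, so there is no in-paper argument to compare against. Assessing your proof on its own terms, there is a genuine gap in item (1), and in fact your attempted proof reveals that item (1) is likely misquoted from \cite{LLMSSZ}.

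The error is the very first deduction: you assert that ``$\lambda$ is $k$-bounded and a $(k+1)$-core'' implies ``$\lambda$ has no hook of length exceeding $k$,'' and hence $\lambda_1+\ell(\lambda)\leq k+1$. This is false. A $(k+1)$-core merely has no hook of length \emph{exactly} $k+1$; nothing prevents hooks of length $k+2$ or more. Concretely, take $k=3$ and $\lambda=(3,2,1)$. This is $3$-bounded, its hook lengths are $\{5,3,1,3,1,1\}$ (no $4$), so it is a $4$-core, yet its maximal hook is $5>k$. Consequently the Jacobi--Trudi determinant involves $h_4$ and $h_5$, which lie outside $\Lambda_{(3)}$; in fact $s_{(3,2,1)}=h_1h_2h_3-h_3^2-h_1^2h_4+h_1h_5 \notin \Lambda_{(3)}$, whereas by the rectangle rule (item (3) of this very proposition) together with the rank-$\leq k$ case one gets $s_{(3,2,1)}^{(3)}=s_{(3)}s_{(2,1)}=h_1h_2h_3-h_3^2$, which is a different symmetric function. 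So item (1) as printed is actually false, and no proof of it can succeed. The correct hypothesis (and the one that your Jacobi--Trudi argument genuinely requires) is that the maximal hook length of $\lambda$ is at most $k$, i.e.\ $\lambda_1+\ell(\lambda)-1\leq k$, which is strictly stronger than being a $k$-bounded $(k+1)$-core.

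Item (2) is unaffected, because your computation correctly shows that $R_a$ has maximal hook length exactly $(k-a+1)+a-1=k$; this is the hypothesis that actually suffices, not the mere $(k+1)$-core property, and you should state it that way rather than deriving it from the (false) item (1). For item (3), your plan reduces to establishing the Lapointe--Morse rectangle factorization via a weight-preserving bijection of $k$-tableaux; as you acknowledge, you have not carried this out, so this part is a deferral to the literature rather than a proof, which is also what the paper itself does. Finally, even once the hypothesis in (1) is corrected, the step ``appeal to the unique triangular characterization'' needs more care than a one-line assertion that $K^{(k)}_{\lambda,\alpha}=K_{\lambda,\alpha}$: the defining triangular system expresses each $h_\delta$ in terms of \emph{all} $k$-Schur functions with $\kappa\geq\delta$, so one must argue that the subsystem indexed by shapes of maximal hook $\leq k$ is self-contained, or use the stronger statement that on such shapes the weak-strip condition coincides with the ordinary horizontal-strip condition at every step of the filling.
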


\begin{corollary}
\label{reducedCase} For each $k$-bounded partition $\lambda$, there exists a
unique irreducible $k$-bounded partition\footnote{A $k$-bounded partition is
called irreducible when it contains less or equal to $a$ parts equal to $k-a$ for any
$a=0,\ldots,k-1$. Otherwise it is called reducible.} $\widetilde{\lambda}$ and a unique sequence of nonnegative
integers $p_{1},\ldots,p_{k}$ such that
\[
s_{\lambda}^{(k)}=\prod_{a=1}^{k}s_{R_{a}}^{p_{a}}s_{\widetilde{\lambda}%
}^{(k)}.
\]
In particular, the $k$-Schur functions are completely determined by the
$k$-Schur functions indexed by the irreducible $k$-bounded partitions and by
the ordinary Schur functions $s_{(k-a+1)^{a}}$, $a=1,\ldots,k$.
\end{corollary}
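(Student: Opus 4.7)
The plan is to reduce the statement to iterated applications of item 3 of the preceding proposition, the identity $s_{R_a}s_{\lambda}^{(k)}=s_{\lambda\cup R_a}^{(k)}$, and to exploit the fact that adding $R_a$ and adding $R_b$ for $a\neq b$ modify disjoint ``part-sizes'' of the underlying $k$-bounded partition. Concretely, for every $k$-bounded partition $\lambda$ and every $a\in\{1,\ldots,k\}$, I would let $n_a(\lambda)$ denote the number of parts of $\lambda$ equal to $k-a+1$; as $a$ runs through $\{1,\ldots,k\}$, the values $k-a+1$ exhaust $\{1,\ldots,k\}$, so the multiplicities $n_a(\lambda)$ completely determine $\lambda$.

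For existence, I would perform the Euclidean division $n_a(\lambda)=a\,p_a+\widetilde{n}_a$ with $0\le\widetilde{n}_a<a$, and define $\widetilde{\lambda}$ as the $k$-bounded partition having exactly $\widetilde{n}_a$ parts equal to $k-a+1$ for each $a$. Since the operation $\mu\mapsto\mu\cup R_a$ only increments $n_a(\mu)$ by $a$ and leaves the other $n_b(\mu)$ unchanged, iterating item 3 of the proposition immediately yields
$$\prod_{a=1}^{k}s_{R_a}^{p_a}\,s_{\widetilde{\lambda}}^{(k)}=s_{\lambda}^{(k)},$$
and $\widetilde{\lambda}$ is irreducible by the very definition of the $\widetilde{n}_a$'s.

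For uniqueness I would invoke the fact that the $k$-Schur functions form a basis of $\Lambda_{(k)}$. Any alternative decomposition $s_{\lambda}^{(k)}=\prod_{a}s_{R_a}^{p'_a}\,s_{\widetilde{\lambda}'}^{(k)}$ with $\widetilde{\lambda}'$ irreducible can be rewritten by the same proposition as a single $s_{\mu}^{(k)}$, where $n_a(\mu)=n_a(\widetilde{\lambda}')+a\,p'_a$; since $n_a(\widetilde{\lambda}')<a$ by irreducibility of $\widetilde{\lambda}'$, linear independence of the $k$-Schur basis forces $\mu=\lambda$, and uniqueness of the Euclidean division then gives $p'_a=p_a$ and $n_a(\widetilde{\lambda}')=\widetilde{n}_a$, hence $\widetilde{\lambda}'=\widetilde{\lambda}$. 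There is no real obstacle here: the decisive observation, which reduces the problem to $k$ independent Euclidean divisions, is that multiplications by distinct $s_{R_a}$ act on disjoint part-sizes and therefore commute cleanly inside $\Lambda_{(k)}$.
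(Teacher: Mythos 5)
Your argument is correct, and it is precisely the elementary argument the paper leaves implicit: the corollary is stated without proof as an immediate consequence of item 3 of the preceding proposition, and your reduction to $k$ independent Euclidean divisions on the multiplicities $n_a(\lambda)$ of parts equal to $k-a+1$, combined with the injectivity of $\nu\mapsto s_\nu^{(k)}$ (distinct basis elements of $\Lambda_{(k)}$), is exactly what is needed. One small remark: the paper's footnote defining irreducibility appears to have an indexing slip (``less than $a$ parts equal to $k-a$ for $a=0,\ldots,k-1$'' should read ``less than $a$ parts equal to $k-a+1$ for $a=1,\ldots,k$''), and your proof uses the correct version, as confirmed by the paper's own Example with $k=4$.
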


\begin{remark}
\label{Rem_bij}Write $\mathcal{P}_{\mathrm{irr}}$ for the set of irreducible
partitions. The map $\Delta:$ $\mathcal{B}_{k}\rightarrow\mathcal{P}%
_{\mathrm{irr}}\times\mathbb{Z}_{\geq0}^{k}$ which associates to each
$k$-bounded partition $\lambda$ the pair $(p,\widetilde{\lambda})$ where
$p=(p_{1},\ldots,p_{k})$ is a bijection.
\end{remark}

\begin{example}
Assume $k=4$ and $\lambda=(4,4,3,3,3,3,3,2,2,2,2,1,1,1,1,1)$. Then we get
$\widetilde{\lambda}=(3,2,1)$ and
\[
s_{\lambda}^{(k)}=s_{(4)}^{2}s_{(3,3)}^{2}s_{(2,2,2)}s_{(1,1,1,1)}%
s_{\widetilde{\lambda}}^{(k)}.
\]

\end{example}

We conclude this paragraph by recalling other important properties of
$k$-Schur functions.\ We have first the inclusions of algebras $\Lambda
_{(k)}\subset\Lambda_{(k+1)}\subset\Lambda$.

\begin{proposition}
(see \cite{LLMSSZ} Section 4)\ \label{Prop_positiveExp}

\begin{enumerate}
\item Each $k$-Schur function has a positive expansion on the basis of
$(k+1)$-Schur functions.

\item Each $k$-Schur function has a positive expansion on the basis of
ordinary Schur functions.
\end{enumerate}
\end{proposition}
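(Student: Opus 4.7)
The plan is to reduce part (2) to part (1), and then outline a route for (1). The chain of inclusions $\Lambda_{(k)}\subset\Lambda_{(k+1)}\subset\cdots\subset\Lambda$ allows iterating the $(k+1)$-Schur positivity: fix a $k$-bounded partition $\lambda$, and choose $N\geq 0$ so large that $|\lambda|\leq k+N$. Applying part (1) along each step of the chain would produce a nonnegative expansion
\[
s_\lambda^{(k)} = \sum_\mu b_\mu\,s_\mu^{(k+N)}
\]
with $b_\mu\in\mathbb{Z}_{\geq 0}$ and the sum running over $(k+N)$-bounded partitions of size $|\lambda|$. Since every such $\mu$ has rank at most $k+N$, item 3 of the recollection gives $s_\mu^{(k+N)}=s_\mu$, yielding part (2). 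So everything reduces to (1).

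For part (1) I would first try the most direct algebraic route: exploit the fact that the family $\{s_\lambda^{(k)}\}_{\lambda\in\mathcal{B}_k}$ is uniquely characterized inside $\Lambda_{(k)}$ by the Pieri rule for $h_r$ with $1\leq r\leq k$ together with the unitriangular expansion in the $h_\delta$ basis. An induction on $|\lambda|$, multiplying a hypothetical expansion $s_\lambda^{(k)}=\sum_\mu c_{\lambda,\mu}\,s_\mu^{(k+1)}$ by $h_r$ and comparing with the two Pieri rules, should propagate positivity. However, this does not close cleanly because the weak horizontal strip combinatorics in $\mathcal{B}_k$ and in $\mathcal{B}_{k+1}$ differ, and there is no a priori injection between them. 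A more robust route is combinatorial, via the strong tableaux model of \cite{LLMSSZ}: realize $s_\lambda^{(k)}$ as the generating function of strong tableaux of shape $\mathfrak{c}(\lambda)$ in the $(k+1)$-core lattice and exhibit a weight-preserving combinatorial rule sending them to disjoint unions of strong tableaux for $(k+2)$-cores indexed by the target $(k+1)$-bounded partitions $\mu$.

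The main obstacle is that each viable approach calls for machinery outside the scope of the present paper. The combinatorial route needs the full theory of strong and weak tableaux and of affine insertion, while a geometric proof would rely on the identification of $s_\lambda^{(k)}$ with a Schubert class in $H_\ast(\mathrm{Gr}_{SL_{k+1}})$ together with positivity for the pushforward along the natural inclusion $\mathrm{Gr}_{SL_{k+1}}\hookrightarrow\mathrm{Gr}_{SL_{k+2}}$. Since the statement is already proved in \cite{LLMSSZ}, my proof would be a reference to that work; I would then use both positivity results as a black box, as is consistent with the stated goal of keeping geometric input to the strict minimum (namely Lam's $k$-Littlewood--Richardson positivity) and pushing the combinatorial side of $k$-Schur functions as far as possible.
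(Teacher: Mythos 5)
The paper gives no proof of this proposition: it is stated with the citation \cite{LLMSSZ} and used as a black box, exactly as you conclude at the end. Your final answer therefore matches the paper's treatment.

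Your reduction of part (2) to part (1) is a sound and clean observation not spelled out in the paper: since each $k$-Schur function is homogeneous of degree $\left\vert \lambda\right\vert$, iterating (1) along $\Lambda_{(k)}\subset\Lambda_{(k+1)}\subset\cdots\subset\Lambda_{(k+N)}$ with $k+N\geq\left\vert \lambda\right\vert$ produces a nonnegative expansion in $(k+N)$-Schur functions indexed by partitions of size $\left\vert \lambda\right\vert\leq k+N$, and for such partitions the recollection (item 3) gives $s_{\mu}^{(k+N)}=s_{\mu}$. Your caution about the direct Pieri-induction route for (1) is also well placed: the weak horizontal strips in $\mathcal{B}_{k}$ and $\mathcal{B}_{k+1}$ genuinely differ and there is no natural injection between the two, so that argument does not close. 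Since the authors explicitly wish to restrict geometric input to Lam's $k$-Littlewood--Richardson positivity and otherwise cite \cite{LLMSSZ}, citing that reference for part (1) is the appropriate move, and the surrounding discussion in your proposal is consistent with what is known.
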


\subsection{Harmonic functions and minimal boundary of $\mathcal{B}_{k}$}

\begin{definition}
A function $f:\mathcal{B}_{k}\rightarrow\mathbb{R}$ is said harmonic when
\[
f(\lambda)=\sum_{\lambda\rightarrow\mu}f(\mu)\text{ for any }\lambda
\in\mathcal{B}_{k}.
\]
We denote by $\mathcal{H}(\mathcal{B}_{k})$ the set of harmonic functions on
$\mathcal{B}_{k}$.
\end{definition}

Another way to understand harmonic functions is to introduce the infinite
matrix $\mathcal{M}$ of the graph $\mathcal{B}_{k}$. The harmonic functions on
$\mathcal{B}_{k}$ then correspond to the right eigenvectors for $\mathcal{M}$
associated to the eigenvalue $1$. One can also consider $t$-harmonic functions
which correspond to the right eigenvectors for $\mathcal{M}$ associated to the
eigenvalue $t$. Clearly $\mathcal{H}(\mathcal{B}_{k})$ is a vector space over
$\mathbb{R}$. In fact, we mostly restrict ourself to the set $\mathcal{H}%
^{+}(\mathcal{B}_{k})$ of positive harmonic functions for which $f$ takes
values in $\mathbb{R}_{\geq0}$. Then, $\mathcal{H}^{+}(\mathcal{B}_{k})$ is a
cone since it is stable by addition and multiplication by a positive real. To
study $\mathcal{H}^{+}(\mathcal{B}_{k})$, we only have to consider its subset
$\mathcal{H}_{1}^{+}(\mathcal{B}_{k})$ of normalized harmonic functions such
that $f(1)=1$. In fact, $\mathcal{H}_{1}^{+}(\mathcal{B}_{k})$ is a convex set
and its structure is controlled by its extremal subset $\partial
\mathcal{H}^{+}(\mathcal{B}_{k})$.

\noindent We aim to characterize the extremal positive harmonic functions
defined on $\mathcal{B}_{k}$ and obtain a simple parametrization of
$\partial\mathcal{H}^{+}(\mathcal{B}_{k})$. By using the Pieri rule on
$k$-Schur functions, we get%
\[
s_{\lambda}s_{(1)}=\sum_{\lambda\rightarrow\mu}s_{\mu}%
\]
for any $k$-bounded partitions $\lambda$ and $\mu$.\ This means that
$\mathcal{B}_{k}$ is a so-called multiplicative graph with associated algebra
$\Lambda_{(k)}$. Moreover, if we denote by $K$ the positive cone spanned by
the set of $k$-Schur functions, we can apply the ring theorem of Kerov and
Vershik (see for example \cite[Section 8.4]{LT}) which characterizes the
subset of extreme points $\partial\mathcal{H}^{+}(\mathcal{B}_{k})$.\ Denote
by $\mathrm{Mult}^{+}(\Lambda_{(k)})\subset\Lambda_{(k)}^{\ast}$ the set of
multiplicative functions on $\Lambda_{(k)}$ which are nonnegative on $K$ and
equal to $1$ on $s_{1}$. So a function $f$ belongs to $\mathrm{Mult}%
^{+}(\Lambda_{(k)})$ when $f:\Lambda_{(k)}\rightarrow\mathbb{R}$ is linear and
satisfies $f(UV)=f(U)f(V)$ for any $U,V\in\Lambda_{(k)}$. Note that
$i:\mathcal{B}_{k}\longrightarrow\Lambda_{(k)}$ such that $i(\lambda
)=s_{\lambda}^{(k)}$ induces a map $i^{\ast}:\Lambda_{(k)}^{\ast
}\longrightarrow F(\mathcal{B}_{k},\mathbb{R})$. Let $K.K$ be the set of products of two elements in $K$. Since we have $K.K\subset K$,
we get the following algebraic characterization of $\partial\mathcal{H}%
^{+}(\mathcal{B}_{k})$.

\begin{proposition}
\label{multiGraphExtreme}The map $i^{\ast}$ yields an homeomorphism between
$\mathrm{Mult}^{+}(\Lambda_{(k)})$ and $\partial\mathcal{H}^{+}(\mathcal{B}%
_{k})$.\ 
\end{proposition}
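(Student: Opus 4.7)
The plan is to reduce this to the Kerov--Vershik ring theorem (cited in the paper), and to verify the hypotheses of that theorem in our setting. The key structural fact is the Pieri identity $s_{(1)}s_{\lambda}^{(k)}=\sum_{\lambda\to\mu}s_{\mu}^{(k)}$ combined with Lam's $k$-Schur positivity $K\cdot K\subset K$.

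\medskip

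First I would check that $i^{\ast}$ sends $\mathrm{Mult}^{+}(\Lambda_{(k)})$ into $\mathcal{H}^{+}_{1}(\mathcal{B}_{k})$. If $\varphi\in\mathrm{Mult}^{+}(\Lambda_{(k)})$ and $f(\lambda):=\varphi(s_{\lambda}^{(k)})$, then $f\geq 0$ because $\varphi$ is nonnegative on $K$; the normalization $f(\emptyset)=\varphi(1)=1$ comes from multiplicativity and $\varphi(s_{(1)})=1$; and applying $\varphi$ to the Pieri rule gives
\[
f(\lambda)=\varphi(s_{(1)})\varphi(s_{\lambda}^{(k)})=\varphi(s_{(1)}s_{\lambda}^{(k)})=\sum_{\lambda\to\mu}f(\mu),
\]
so $f$ is harmonic. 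Injectivity of $i^{\ast}$ is free: the $s_{\lambda}^{(k)}$ span $\Lambda_{(k)}$, so a morphism is determined by its values on them, hence by $f$.

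\medskip

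Next I would show that $i^{\ast}(\mathrm{Mult}^{+}(\Lambda_{(k)}))\subset\partial\mathcal{H}^{+}(\mathcal{B}_{k})$. This is the heart of the Kerov--Vershik ring theorem. Suppose $f=i^{\ast}(\varphi)$ decomposes as $f=\alpha f_{1}+(1-\alpha)f_{2}$ with $f_{1},f_{2}\in\mathcal{H}^{+}_{1}(\mathcal{B}_{k})$ and $0<\alpha<1$. Each $f_{j}$ extends by linearity to a positive linear functional $\varphi_{j}$ on $\Lambda_{(k)}$ with $\varphi=\alpha\varphi_{1}+(1-\alpha)\varphi_{2}$. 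The point is to prove that each $\varphi_{j}$ is again multiplicative; since $\varphi$ is extremal among positive multiplicative functionals on the cone $K$ (this uses $K\cdot K\subset K$ together with a Cauchy--Schwarz-type trick applied iteratively to products $s_{\lambda}^{(k)}s_{\mu}^{(k)}$), one gets $\varphi_{1}=\varphi_{2}=\varphi$. Conversely, given $f\in\partial\mathcal{H}^{+}(\mathcal{B}_{k})$, I would define $\varphi(s_{\lambda}^{(k)}):=f(\lambda)$ and extend linearly, and then prove multiplicativity from extremality. The standard argument is: for fixed $\mu$, the function $\lambda\mapsto\sum_{\nu}c_{\lambda,\mu}^{\nu(k)}f(\nu)$ is again a nonnegative harmonic function (positivity uses Lam's theorem, harmonicity uses associativity $s_{(1)}(s_{\lambda}^{(k)}s_{\mu}^{(k)})=(s_{(1)}s_{\lambda}^{(k)})s_{\mu}^{(k)}$), so by extremality of $f$ it must be a scalar multiple of $f$; identifying the scalar at $\lambda=\emptyset$ forces $\sum_{\nu}c_{\lambda,\mu}^{\nu(k)}f(\nu)=f(\lambda)\varphi(s_{\mu}^{(k)})$, which is exactly multiplicativity.

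\medskip

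Finally for the topological statement, equip $\mathrm{Mult}^{+}(\Lambda_{(k)})$ with the weak-$\ast$ topology (pointwise convergence on $\Lambda_{(k)}$) and $\partial\mathcal{H}^{+}(\mathcal{B}_{k})$ with pointwise convergence on $\mathcal{B}_{k}$. The map $i^{\ast}$ is continuous by definition (pointwise convergence on $\Lambda_{(k)}$ implies pointwise convergence of $\varphi(s_{\lambda}^{(k)})$ in $\lambda$). For the inverse, if $f_{n}\to f$ pointwise on $\mathcal{B}_{k}$, then the associated $\varphi_{n}$ converge pointwise on the basis $\{s_{\lambda}^{(k)}\}$, hence on all of $\Lambda_{(k)}$ by linearity; since both sides can be embedded in compact (actually, closed bounded) sets inside appropriate projective limits, this bi-continuity upgrades to a homeomorphism.

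\medskip

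The main obstacle is the multiplicativity/extremality equivalence, i.e.\ the ring theorem itself; but since the paper is invoking it as a black box and all of its hypotheses (multiplicativity of $\mathcal{B}_{k}$ via the Pieri rule, stability $K\cdot K\subset K$ from Lam's theorem, and the $k$-Schur basis property) are already established in the preceding subsections, the proof reduces to checking these hypotheses and citing \cite{LT}.
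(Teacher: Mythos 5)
Your proposal is correct and follows the same route as the paper: the paper simply verifies the two hypotheses of the Kerov--Vershik ring theorem (that $\mathcal{B}_k$ is a multiplicative graph via the Pieri rule $s_{(1)}s_{\lambda}^{(k)}=\sum_{\lambda\to\mu}s_{\mu}^{(k)}$, and that $K\cdot K\subset K$ via Lam's $k$-Schur positivity) and then cites \cite[Section 8.4]{LT} without reproducing the proof. Your additional unpacking of the ring theorem (the harmonic-function-dominated-by-$f$ trick in the converse direction, the weak-$*$ topology argument for bi-continuity) is all standard and consistent with what that reference proves; you correctly identify that the substance is in the black-boxed theorem and that the paper's contribution here is only the verification of its hypotheses.
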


Since $i(\mathcal{B}_{k})$ is a basis of $\Lambda_{(k)}$, this means that
$\partial\mathcal{H}(\mathcal{B}_{k})$ is completely determined by the
$\mathbb{R}$-algebra morphisms $\varphi:\Lambda_{(k)}\rightarrow\mathbb{R}$
such that $\varphi(s_{1})=1$ and $\varphi(s_{\lambda}^{(k)})\geq0$ for any
$k$-bounded partition $\lambda$. Each function $f\in\partial\mathcal{H}%
^{+}(\mathcal{B}_{k})$ can then be written $f=\varphi\circ i$.

By Corollary \ref{reducedCase}, the condition $\varphi(s_{\lambda}^{(k)}%
)\geq0$ for each $k$-bounded partition reduces in fact to test a finite number
of $k$-Schur functions, namely $\varphi(s_{\widetilde{\lambda}}^{(k)})\geq0$
for each irreducible $k$-bounded partition (there are $k!$ such
partitions) and $\varphi(s_{(k-a+1)^{a}})\geq0$ for any $a=1,\ldots,k$. We
will see that the condition $\varphi(s_{\lambda}^{(k)})>0$ for each
$k$-bounded partition reduces in fact to test the positivity on the Schur
functions associated to partitions with maximal hook-length less or equal to
$k$ (See Remark \ref{Rema_redtest}).

\section{Restricted graph and irreducibility}

\subsection{The matrix $\Phi$}

\label{subsec_MatrixFI}By Corollary \ref{reducedCase}, each morphism
$\varphi:\Lambda_{(k)}\rightarrow\mathbb{R}$ is uniquely determined by its
values on the rectangle Schur functions $s_{R_{a}},1\leq a\leq k$ and on each
$s_{\tilde{\lambda}}^{(k)}$ where $\tilde{\lambda}$ is an irreducible
$k$-bounded partition. Set $r_{a}=\varphi(s_{R_{a}}),a=1,\ldots,k$ and
$\vec{r}=(r_{1},\ldots,r_{k})$. Recall that $\mathcal{P}_{\mathrm{irr}}$ is
the set of irreducible $k$-bounded partitions (including the empty partition).
Then, for $\lambda\in\mathcal{P}_{\mathrm{irr}}$,
\begin{equation}
\varphi(s_{\lambda}^{(k)})\varphi(s_{(1)})=\sum_{\lambda\rightarrow\mu}%
\varphi(s_{\mu}^{(k)}). \label{PieriPhi}%
\end{equation}
By Corollary \ref{reducedCase}, for each $k$-bounded partition $\mu$ there
exists a sequence $\{p_{1}^{\mu},p_{2}^{\mu}\ldots,p_{k}^{\mu}\}$ of elements
in $\{1,\dots,k\}$ and an irreducible partition $\widetilde{\mu}$ such that
\begin{equation}
s_{\mu}^{(k)}=\prod_{a=1}^{k}s_{R_{a}}^{p_{a}^{\mu}}s_{\widetilde{\mu}}%
^{(k)}\text{ and thus }\varphi(s_{\mu}^{(k)})=\prod_{a=1}^{k}r_{a}^{p_{a}%
^{\mu}}\varphi(s_{\widetilde{\mu}}^{(k)}). \label{decompos}%
\end{equation}
Hence by setting
\[
\varphi_{\lambda\nu}=\sum_{\substack{\lambda\rightarrow\mu\\\widetilde{\mu
}=\nu}}\prod_{1\leq a\leq k}r_{a}^{p_{a}^{\mu}}%
\]
we get
\[
\varphi(s_{\lambda}^{(k)})=\sum_{\nu\in\mathcal{P}_{\mathrm{irr}}}%
\varphi_{\lambda\nu}\varphi(s_{\nu}^{(k)}).
\]
Let $\Phi_{(r_{1},\ldots,r_{k})}:=(\varphi_{\nu\lambda})_{\lambda,\nu
\in\mathcal{P}_{\mathrm{irr}}}$ \footnote{Observe we have defined
$\Phi_{(r_{1},\ldots,r_{k})}$ as the transpose of the matrix $(\varphi
_{\lambda,\mu})_{\lambda,\nu\in\mathcal{P}_{\mathrm{irr}}}$ to make it
compatible with the multiplication by $s_{(1)}$ used in Section
\ref{Secion_Field}.} and define $f\in\mathbb{R}^{\mathcal{P}_{\mathrm{irr}}}$
as the vector $(\varphi(s_{\lambda}^{(k)}))_{\lambda\in\mathcal{P}%
_{\mathrm{irr}}}$. When there is no risk of confusion, we simply write $\Phi$
instead of $\Phi_{(r_{1},\ldots,r_{k})}$. The vector $f$ is a left eigenvector
of $\Phi$ for the eigenvalue $\varphi(s_{1})$ with positive entries having
value $1$ on $\emptyset$ and $\varphi(s_{1})$ on $s_{1}$.

\subsection{Irreducibility of the matrix $\Phi$}

Recall that a matrix $M\in M_{n}(\mathbb{R})$ with nonnegative entries is
irreducible if and only if for each $1\leq i,j\leq n$ there exists $n\geq1$
such that $(M^{n})_{ij}>0$.

\begin{proposition}
\label{irredPhi}Consider $\vec{r}=(r_{1},\ldots,r_{k})\in\mathbb{R}_{\geq
0}^{k}.$ Then, the matrices $\Phi_{(r_{1},\ldots,r_{k})}$ and $\Phi
_{(r_{1},\ldots,r_{k})}^{t}$ associated to $\varphi$ are irreducible if and
only if for all $1\leq a\leq k-1$, $r_{a}$ or $r_{a+1}$ is positive.
\end{proposition}

We will prove in fact that $\Phi^{t}$ is irreducible.\ Let $G$ be the graph
with set of vertices $\mathcal{P}_{\mathrm{irr}}$ and a directed edge from
$\lambda$ to $\nu$ if and only if $\Phi_{\lambda\nu}\not =0$. The matrix
$\Phi^{t}$ is irreducible if and only if $G$ is strongly connected, which
means that there is a (directed) path from any vertex to any other vertex of
the graph. We prove Proposition \ref{irredPhi} by showing that $G$ is strongly
connected. Let us first establish a preliminary lemma. We say that $\lambda
\in\mathcal{P}_{\mathrm{irr}}$ is $1$-saturated when it contains less than
$k-1$ parts equal to $1$. More generally, for $i\geq2$ the partition
$\lambda\in\mathcal{P}_{\mathrm{irr}}$ is $i$-saturated when we have%
\[
\lambda=(\ldots,(i-1)^{k-i+1},\dots,2^{k-2},1^{k-1})\text{ and }\lambda
\neq(\ldots,i^{k-i},(i-1)^{k-i+1},\dots,2^{k-2},1^{k-1}).
\]
Denote by $\lambda^{1}$ the irreducible $k$-bounded partition $((k-1)^{1}%
,(k-2)^{2},\dots,1^{k-1})$. Observe that $\lambda\in\mathcal{P}_{\mathrm{irr}%
}$ is $k$-saturated if and only if $\lambda=\lambda^{1}$.

\begin{lemma}
Any vertex $\lambda$ of $G$ is connected to $\lambda^{1}$.
\end{lemma}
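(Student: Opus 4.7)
The plan is to argue by induction on $|\lambda^{1}|-|\lambda|\geq 0$. Writing $\ell_{b}(\lambda)$ for the number of parts of $\lambda$ equal to $b$, irreducibility reads $\ell_{b}(\lambda)\leq k-b$ for $1\leq b\leq k-1$, and these bounds are saturated precisely at $\lambda^{1}$, where $\ell_{b}(\lambda^{1})=k-b$. Hence $\lambda^{1}$ is the unique irreducible partition of maximum size and the base case $\lambda=\lambda^{1}$ is immediate.

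For the inductive step I will show that for every $\lambda\in\mathcal{P}_{\mathrm{irr}}\setminus\{\lambda^{1}\}$ there exists an irreducible $\mu$ with $\lambda\to\mu$ in $\mathcal{B}_{k}$. Because $\mu$ is irreducible the associated coefficient $\prod_{a}r_{a}^{p_{a}^{\mu}}$ equals $1$, so $\lambda\to\mu$ is automatically a nonzero edge of $G$ regardless of the values of the $r_{a}$'s; combining with the induction hypothesis applied to $\mu$ (which has size $|\lambda|+1$) yields the required directed path from $\lambda$ to $\lambda^{1}$.

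To produce $\mu$, let $b_{0}$ be the smallest $b\in\{1,\ldots,k-1\}$ with $\ell_{b}(\lambda)<k-b$. By minimality of $b_{0}$ the tail of $\lambda$ starting at some position $p$ is the fully saturated block
\[
\left((b_{0}-1)^{k-b_{0}+1},(b_{0}-2)^{k-b_{0}+2},\ldots,1^{k-1}\right).
\]
If $b_{0}=1$ I take $\mu$ to be $\lambda$ with a new part equal to $1$ appended; if $b_{0}\geq 2$ I take $\mu$ obtained from $\lambda$ by incrementing $\lambda_{p}$ from $b_{0}-1$ to $b_{0}$. Both choices produce an irreducible $k$-bounded partition of size $|\lambda|+1$: the increment does not violate any bound $\ell_{b}\leq k-b$ since $\ell_{b_{0}}$ goes from at most $k-b_{0}-1$ to at most $k-b_{0}$, and $\ell_{b_{0}-1}$ only decreases.

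The step requiring the most care is verifying through Lemma \ref{conditionArrow} that $\lambda\to\mu$ is genuinely an arrow of $\mathcal{B}_{k}$. The case $b_{0}=1$ is straightforward since the new box starts a fresh chain of zeros on which the condition is vacuous. For $b_{0}\geq 2$, I will trace the chain originating at $\lambda_{p}$: a short calculation shows that its $j$-th successor (for $2\leq j\leq b_{0}-1$) lies at offset $j-1$ inside the block of common size $b_{0}-j$ of the saturated tail, so the immediately preceding partition entry sits at offset $j-2$ in the same block and shares the value $b_{0}-j$. Once the chain exits the tail it enters the region of padded zeros, where the condition is again trivial. The equalities $\lambda_{i_{b}-1}=\lambda_{i_{b}}$ demanded by Lemma \ref{conditionArrow} all therefore hold, completing the induction.
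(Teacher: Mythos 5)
Your proof is correct and uses essentially the same mechanism as the paper's: add a box to the first row of the fully saturated tail of $\lambda$ and verify the required arrow in $\mathcal{B}_{k}$ by tracing the chain through Lemma~\ref{conditionArrow}. Your packaging is tighter — a direct induction on $|\lambda^{1}|-|\lambda|$ in place of the paper's multi-stage iteration through $i$-saturated states, together with the explicit remark that an edge of $\mathcal{B}_{k}$ between two \emph{irreducible} partitions always carries coefficient $1$ in $\Phi$ and therefore persists in $G$ for any $\vec{r}$ — but the underlying construction and the chain computation are the same.
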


\begin{proof}
We get a path between $\lambda$ and $\lambda^{1}$ from the following observations.

\noindent1: If $\lambda$ is $i$-saturated, we claim that $\lambda
\rightarrow\lambda^{\uparrow i-1}$, where $\lambda^{\uparrow i-1}$ is the
partition obtained by adding one box to the first row of size $(i-1)$ of
$\lambda$. Moreover, $\lambda^{\uparrow i-1}$ is then $(i-1)$-saturated. To
see this, consider the minimal integer $r$ such that $\lambda_{r}=i-1$ and the
chain $c$ associated to $\lambda_{r}$. Since $\lambda$ is $i$-saturated, a
combinatorial computation shows that
\[
c=\{\ldots,\lambda_{r},\lambda_{r+k-i+2},\lambda_{r+(k-i+2)+(k-i+3)}%
,\ldots,\lambda_{m}\}\text{ with }m=r+\sum_{s=0}^{i-1}(k-i+s).
\]
Moreover, for $0\leq t\leq i-1$, the $(t+1)$-th row in the chain $c$ is
$\lambda_{r+\sum_{s=0}^{t}(k-i+s)}$ and has length $i-1-t$. In particular,
each row following $\lambda_{r}$ in the chain $c$ is preceded by a row with
the same length. Therefore, we can apply Lemma \ref{conditionArrow} to get the
existence of an edge between $\lambda$ and the partition $\lambda^{\uparrow
i-1}$ which is $(i-1)$-saturated.

\noindent2: Suppose that $\lambda$ is $i$-saturated. Applying successively the
previous procedure on the partitions $\lambda,\lambda^{\uparrow(i-1)}%
,(\lambda^{\uparrow(i-1)})^{\uparrow(i-2)},\dots$ eventually yields a
partition $\nu$ which is irreducible, $i$-saturated, with one more row of
length $i$ than $\lambda$ and such that there is a path between $\lambda$ and
$\nu$ in $G$.

\noindent3: Suppose that $\lambda$ has initially $l\leq k-i-1$ rows of size
$i$. By repeating $k-i-l$ times step 2, one gets a partition $\kappa$
connected to $\lambda$ in $\Gamma$ which is $(i+1)$-saturated.

\noindent4: Repeated applications of step 3 for $i<l\leq k$ yields a path
between $\lambda$ and $\lambda^{1}$ in $G$.
\end{proof}

We prove Proposition \ref{irredPhi} by giving necessary and sufficient
conditions to have a path on $G$ between $\lambda^{1}$ and $\emptyset$.

\begin{proof}
[Proof of Proposition \ref{irredPhi}]Let us show that there is a path between
$\lambda^{1}$ and $\emptyset$ if and only if for all $1\leq a\leq k-1$,
$\varphi(s_{R_{a}})$ or $\varphi(s_{R_{a+1}})$ is positive.

Assume first that for all $1\leq a\leq k-1$, $\varphi(s_{R_{a}})$ or
$\varphi(s_{R_{a+1}})$ is positive.

\noindent For $1\leq a\leq k-1$, let $\lambda^{a}$ be the partition
$(k-1,(k-2)^{2},\ldots,a^{k-a})$, and set $\lambda^{k}=\emptyset$. Let us
prove that for $1\leq a\leq k-1$, there is a path on $G$ from $\lambda^{a}$ to
$\lambda^{a+1}$. If $\varphi(s_{R_{a}})>0$, it suffices to add a part of
length $a$ at the end of $\lambda^{a}$, which is always possible. This gives a
rectangle $a^{k-a+1}$ which, once removed, yields $\lambda^{a+1}$. Assume now
we have $\varphi(s_{R_{a}})=0$ and thus $\varphi(s_{R_{a+1}})>0$ by the
hypothesis on $\varphi$. Let $i$ be minimal such that $\lambda_{i}^{a}=a$, and
let $c$ be the chain containing $\lambda_{i}^{a}$. On the one hand, since
$\lambda_{i}^{a}=a$, the part following $\lambda_{i}^{a}$ in $c$ is
$\lambda_{i+k-a+1}^{a}$. On the other hand, by definition of $\lambda^{a}$, we
have $\lambda_{i+(k-a)}^{a}=\lambda_{i+(k-a)+1}^{a}=0$. Thus, by Lemma
\ref{conditionArrow}, we can add a box on the part $\lambda_{i}^{a}$, which
makes appear a block $(a+1)^{k-a+1}$. Since $\varphi(s_{R_{a+1}})>0$ we so get
an arrow in $G$ between $\lambda^{a}$ and the partition $(\lambda
^{a-2},\lambda_{i+1}^{a},\ldots,\lambda_{i+(k-a)-1}^{a})$. Similarly, we can
successively add a box to the parts $\lambda_{i+1}^{a},\ldots,\lambda
_{i+(k-a)-1}^{a}$ (all equal to $a$) and get a path between $(\lambda
^{a-2},\lambda_{i+1}^{a},\ldots,\lambda_{i+(k-a)-1}^{a})$ and $\lambda^{a+1}$.
We so obtain a path in $G$ from $\lambda^{a}$ to $\lambda^{a+1}$ for all
$1\leq a\leq k-1$ and thus a path from $\lambda^{1}$ to $\lambda^{k}%
=\emptyset$.

Assume now that there exists $1\leq a\leq k-1$ such that $\varphi(s_{R_{a}%
})=\varphi(s_{R_{a+1}})=0$.

\noindent Let $\gamma=(\mu^{1},\ldots,\mu^{r})$ be a path on $G$ starting at
$\mu^{1}=a^{k-a}$, and denote by $x_{i}$ and $y_{i}$ the number of parts in
$\mu^{i}$ equal to $a+1$ and $a$, respectively. Since $\gamma$ is a path on
$G$, for all $1\leq i\leq r$, we have $x_{i}\leq k-a-1$ and $y_{i}\leq k-a$.
Let us prove by induction on $1\leq i\leq r$ that $x_{i}+y_{i}\geq k-a$.

\noindent This is certainly true for $i=1$. Assume that $i>1$ and the result
holds for $i-1$. Since $\varphi(s_{R_{a}})=\varphi(s_{R_{a+1}})=0$, the only
way to get $x_{i}+y_{i}<x_{i-1}+y_{i-1}$ is to add a box in the first part of
length $a+1$ (if any) of $\mu^{i-1}$. So if we have $x_{i-1}=0$, then
$x_{i}+y_{i}\geq x_{i-1}+y_{i-1}\geq k-a$. Assume now that $x_{i-1}>0$. Let
$l$ be minimal such that $\mu_{l}^{i-1}=a+1$ and let $c$ be the chain
containing $\mu_{l}^{i-1}$. The part following $\mu_{l}^{i-1}$ in $c$ is then
equal to $l+k-(a+1)+1=l+(k-a)$. Since $x_{i-1}+y_{i-1}\geq k-a$ and
$x_{i-1}\leq k-a-1$, we have $\mu_{l+(k-a)-1}^{i-1}=a$. Thus, by Lemma
\ref{conditionArrow}, it is possible to add a box on the part $l$ of
$\mu^{i-1}$ if and only if $\mu_{l+k-a}^{i-1}=a.$ If so, we get in fact
$x_{i-1}+y_{i-1}\geq(l+k-a)-l+1\geq k-a+1$ and $x_{i}+y_{i}=x_{i-1}+y_{i-1}%
-1$. Therefore, in any cases, $x_{i}+y_{i}\geq k-a$. We have proved that for
any path on $G$ starting at $a^{k-a}$, the number of parts of length $a$ or
$a+1$ is at least $k-a$, thus there is no path between $a^{k-a}$ and
$\emptyset$ in $G$.
\end{proof}

\section{Field extensions and $k$-Schur functions}

\label{Secion_Field}

\subsection{Field extensions}

Recall that $\Lambda_{(k)}\mathbb{=R}[h_{1},\ldots,h_{k}]$.\ Since
$h_{1},\ldots,h_{k}$ are algebraically independent over $\mathbb{R}$, we can
consider the fraction field $\mathbb{L}=\mathbb{R}(h_{1},\ldots,h_{k}%
)$.\ Write $\mathbb{A}=\mathbb{R}[s_{R_{1}},\ldots s_{R_{k}}]$ the subalgebra
of $\Lambda_{(k)}$ generated by the rectangle Schur functions $s_{R_{a}%
},a=1,\ldots,k$. In order to introduce the fraction field of $\mathbb{A}$, we
first need to check that $s_{R_{1}},\ldots s_{R_{k}}$ are algebraically
independent over $\mathbb{R}$. We shall use a proposition giving a sufficient
condition on a family of polynomials to be algebraically independent.

Let $k$ be a field and $k[T_{1},\ldots,T_{m}]$ the ring of polynomials in
$T_{1},\ldots,T_{m}$ over $k$.\ For any $\beta\in\mathbb{Z}_{\geq0}^{m}$, we
set $T^{\beta}=T_{1}^{\beta_{1}}\cdots T_{m}^{\beta_{m}}$. We also consider a total order $\preceq$ on the monomials of $k[T_{1},\ldots,T_{m}]$ given by the lexicographical order on the exponents. Namely, $T^{\beta}\succeq T^{\beta'}$ if and only if $\beta_{i}> \beta'_{i}$ on the first index on which $\beta$ and $\beta'$ differ, if any. The
leading monomial $\mathrm{lm}(P)$ of a polynomial $P\in k[T_{1},\ldots,T_{m}]$
is the monomial appearing in the support of $P$ (that is, with a nonzero
coefficient) maximal under the total order $\preceq$.

\begin{proposition}
(See \cite[Lemma 4.2.10]{Mitt}, \cite[Proposition 6.6.11]{KR})\label{Prop_alge_inde}

\begin{enumerate}
\item The monomials $T^{\beta^{(1)}},\ldots,T^{\beta^{(p)}}$ are algebraically
independent if and only if $\beta^{(1)},\ldots,\beta^{(p)}$ are linearly
independent over $\mathbb{Z}$.

\item Consider $P_{1},\ldots,P_{l}$ polynomials in $k[T_{1},\ldots,T_{m}]$
such that $\mathrm{lm}(P_{1}),\ldots,\mathrm{lm}(P_{l})$ are algebraically
independent, then $P_{1},\ldots,P_{l}$ are algebraically independent.
\end{enumerate}
\end{proposition}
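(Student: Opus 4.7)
The plan is to prove part (1) directly by bookkeeping of exponent vectors, and then reduce part (2) to part (1) by exploiting the multiplicativity of $\mathrm{lm}$ under a term order. The entire argument is elementary provided one treats the monomial semigroup $\mathbb{Z}_{\geq 0}^{m}$ carefully.

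For part (1), the $(\Leftarrow)$ direction I would handle by contraposition: if $\sum_{i=1}^{l} c_{i}\beta^{(i)}=0$ with integers $c_{i}$ not all zero, one splits the index set according to the sign of $c_{i}$ to obtain the nontrivial monomial identity $\prod_{c_{i}>0}(T^{\beta^{(i)}})^{c_{i}} = \prod_{c_{j}<0}(T^{\beta^{(j)}})^{-c_{j}}$, which is an algebraic relation. For $(\Rightarrow)$, assume the $\beta^{(i)}$ are $\mathbb{Z}$-linearly independent and consider a putative polynomial relation $\sum_{\alpha \in \mathbb{Z}_{\geq 0}^{l}} c_{\alpha}\,(T^{\beta^{(1)}})^{\alpha_{1}}\cdots(T^{\beta^{(l)}})^{\alpha_{l}} = \sum_{\alpha} c_{\alpha}\, T^{\sum_{i}\alpha_{i}\beta^{(i)}} = 0$. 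If two distinct $\alpha,\alpha'$ in the support produced the same exponent, then $\alpha-\alpha'$ would witness a nontrivial $\mathbb{Z}$-dependence among the $\beta^{(i)}$; hence the monomials appearing on the right are pairwise distinct and every $c_{\alpha}$ must vanish.

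For part (2), I argue by contradiction: suppose $Q = \sum_{\alpha} c_{\alpha} X^{\alpha} \in k[X_{1},\ldots,X_{l}]$ is nonzero with $Q(P_{1},\ldots,P_{l})=0$, and expand out the sum $\sum_{\alpha} c_{\alpha} P_{1}^{\alpha_{1}}\cdots P_{l}^{\alpha_{l}} = 0$. Since $\preceq$ is a term order, $\mathrm{lm}$ is multiplicative, so the leading monomial of the $\alpha$-summand is $\mathrm{lm}(P_{1})^{\alpha_{1}}\cdots \mathrm{lm}(P_{l})^{\alpha_{l}}$. By part (1), algebraic independence of $\mathrm{lm}(P_{1}),\ldots,\mathrm{lm}(P_{l})$ forces $\mathbb{Z}$-linear independence of their exponent vectors, so these leading monomials are pairwise distinct as $\alpha$ ranges over the support of $Q$. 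Pick $\alpha^{*}$ in this support that maximizes $\mathrm{lm}(P_{1})^{\alpha_{1}}\cdots \mathrm{lm}(P_{l})^{\alpha_{l}}$ under $\preceq$; the corresponding term cannot be cancelled by any other summand, so $Q(P_{1},\ldots,P_{l})\neq 0$, contradiction. The only subtle point — and I do not expect it to be a real obstacle — is the tacit assumption that $\preceq$ is a \emph{term order} rather than an arbitrary total order, so that $\mathrm{lm}$ is a monoid homomorphism on monomials; this is the standing convention of the reference \cite{KR}.
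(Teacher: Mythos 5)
Your proof is mathematically correct. Note first that the paper itself does not supply a proof of this proposition; it simply cites Kreuzer--Robbiano, so there is no in-paper argument to compare against.

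Your argument is the standard one and it works: part (1) reduces to the observation that the map $\alpha\mapsto\sum_i\alpha_i\beta^{(i)}$ from $\mathbb{Z}_{\geq 0}^{l}$ to $\mathbb{Z}^{m}$ is injective exactly when the $\beta^{(i)}$ are $\mathbb{Z}$-independent (distinctness of the exponents in the image kills any would-be relation), together with the sign-splitting trick for the converse; part (2) then follows by picking, among the exponents $\alpha$ in the support of the putative relation $Q$, the one whose corresponding product of leading monomials is $\preceq$-maximal, and observing by multiplicativity of $\mathrm{lm}$ and part (1) that this top monomial cannot be cancelled. Two small remarks. First, you have swapped the labels: the paragraph you call ``$(\Leftarrow)$ by contraposition'' in fact proves the $(\Rightarrow)$ direction (algebraic independence $\Rightarrow$ $\mathbb{Z}$-linear independence) via its contrapositive, and the paragraph you call ``$(\Rightarrow)$'' proves $(\Leftarrow)$; the mathematics is unaffected. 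Second, you are right to flag that the argument needs $\preceq$ to be a \emph{term order} (i.e.\ a total order compatible with the semigroup structure on exponents), not just an arbitrary total order refining dominance, since multiplicativity of $\mathrm{lm}$ is exactly what the choice of $\alpha^{*}$ exploits; the paper's phrasing ``any total order refining this dominance order'' is slightly too loose on this point, though of course term orders refining dominance do exist and this is what the application actually uses.
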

We also refer to \cite[Section 3]{Pan} for a detailed proof of this proposition. With Proposition \ref{Prop_alge_inde} in hand, it is then easy to check that
$s_{R_{1}},\ldots s_{R_{k}}$ are algebraically independent over $\mathbb{R}$.
Recall that each Schur function $s_{\lambda}$ with indeterminate set
$X=\{X_{1},X_{2}\ldots\}$ decomposes on the form%
\[
s_{\lambda}=X^{\lambda}+\sum_{\mu<\lambda}K_{\lambda,\mu}X^{\mu}%
\]
where $\leq$ is the dominant order over finite sequences of integers, that is,
$\beta<\beta^{\prime}$ when $\beta^{\prime}-\beta$ decomposes as a sum of
$\varepsilon_{i}-\varepsilon_{j},i<j$ with nonnegative integer coefficients.
Since the lexicographical order refines the dominance order,
by Assertion 1 of the previous proposition, the monomials $X^{R_{1}}%
,\ldots,X^{R_{k}}$ are algebraically independent since the rectangle
partitions $R_{a},a=1,\ldots,k$ are linearly independent over $\mathbb{Z}$.
Assertion 2 then implies that $s_{R_{1}},\ldots s_{R_{k}}$ are algebraically
independent over $\mathbb{R}$. We denote by $\mathbb{K}=\mathbb{R}(s_{R_{1}%
},\ldots s_{R_{k}})$ the fraction field of the algebra $\mathbb{A}$.

\begin{proposition}
\ \label{PropLK}

\begin{enumerate}
\item Each $h_{1},\ldots,h_{k}$ is algebraic over $\mathbb{K}$.

\item We have $\mathbb{L}=\mathbb{R}(h_{1},\ldots,h_{k})=\mathbb{K}%
[h_{1},\ldots,h_{k}]$.

\item $\mathbb{L}$ is an algebraic extension of $\mathbb{K}$.

\item The field $\mathbb{L}$ is a finite extension of $\mathbb{K}$ with degree
$[\mathbb{L}:\mathbb{K}]=k!$ and the set $\mathcal{I}=\{s_{\kappa}^{(k)}%
\mid\kappa\in\mathcal{P}_{\mathrm{irr}}\}$ is a basis of $\mathbb{L}$ over
$\mathbb{K}$.

\item $\Lambda_{(k)}$ is an integral extension of $\mathbb{A}$.
\end{enumerate}
\end{proposition}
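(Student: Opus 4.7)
The plan is to derive all five statements from a single structural fact: that $\Lambda_{(k)}$ is a free $\mathbb{A}$-module with basis $\mathcal{I}$, of rank $k!$. First one observes that $\mathbb{A}\subset\Lambda_{(k)}$ since each $s_{R_a}$ is a polynomial in the $h_i$'s (by Jacobi--Trudi), so that $\mathbb{K}\subset\mathbb{L}$. The key step is then to combine Corollary \ref{reducedCase} with the bijection $\Delta$ of Remark \ref{Rem_bij}: every $s_\lambda^{(k)}$, $\lambda\in\mathcal{B}_{k}$, admits a unique factorisation $\prod_{a=1}^k s_{R_a}^{p_a} s_{\widetilde{\lambda}}^{(k)}$ with $\widetilde{\lambda}\in\mathcal{P}_{\mathrm{irr}}$ and $(p_1,\ldots,p_k)\in\mathbb{Z}_{\geq0}^k$. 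Since the $s_{R_a}$ are algebraically independent, the monomials $\prod s_{R_a}^{p_a}$ form an $\mathbb{R}$-basis of $\mathbb{A}$, and since the $s_\lambda^{(k)}$ form an $\mathbb{R}$-basis of $\Lambda_{(k)}$, one deduces the direct sum decomposition
\[
\Lambda_{(k)}=\bigoplus_{\widetilde{\lambda}\in\mathcal{P}_{\mathrm{irr}}}\mathbb{A}\cdot s_{\widetilde{\lambda}}^{(k)}.
\]
A direct combinatorial count gives $|\mathcal{P}_{\mathrm{irr}}|=k!$: an irreducible partition can contain any of $0,1,\ldots,a-1$ parts of size $k-a+1$ for each $a=1,\ldots,k$, which yields $\prod_{a=1}^{k}a=k!$ choices.

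From this decomposition everything follows quickly. Item (5) is immediate since a finitely generated module over a commutative ring is automatically integral. Item (1) follows because each $h_i\in\Lambda_{(k)}$ is integral, hence algebraic, over $\mathbb{A}$, and a fortiori over $\mathbb{K}$. Item (3) is then a direct consequence, since $\mathbb{L}=\mathbb{R}(h_1,\ldots,h_k)$ is generated over $\mathbb{K}$ by the algebraic elements $h_1,\ldots,h_k$. For item (2) I would note that the ring $\mathbb{K}[h_1,\ldots,h_k]$ is a finite-dimensional $\mathbb{K}$-algebra (by (1)) and an integral domain (as a subring of $\mathbb{L}$), hence a field; this field contains both $\mathbb{K}$ and the $h_i$, so it contains and therefore equals $\mathbb{L}$. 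Finally, for item (4), integrality is used to identify $\mathbb{L}$ with $\mathbb{K}\otimes_{\mathbb{A}}\Lambda_{(k)}$: every nonzero $y\in\Lambda_{(k)}$ satisfies a monic relation $y^n+a_{n-1}y^{n-1}+\cdots+a_0=0$ over $\mathbb{A}$ with $a_0\ne 0$, which expresses $y^{-1}$ as an $\mathbb{A}$-linear combination of elements of $\Lambda_{(k)}$ divided by $a_0\in\mathbb{A}\setminus\{0\}$, so every fraction lies in the localisation. Tensoring the above decomposition with $\mathbb{K}$ then yields $\mathbb{L}=\bigoplus_{\widetilde{\lambda}}\mathbb{K}\cdot s_{\widetilde{\lambda}}^{(k)}$, proving both that $\mathcal{I}$ is a $\mathbb{K}$-basis and that $[\mathbb{L}:\mathbb{K}]=k!$.

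The real content is packaged in Corollary \ref{reducedCase} and Remark \ref{Rem_bij}, which are already in hand; the main obstacle is simply to recognise that together they upgrade the $\mathbb{R}$-basis $\{s_\lambda^{(k)}\}$ of $\Lambda_{(k)}$ to an $\mathbb{A}$-basis $\mathcal{I}$, after which the argument is a routine sequence of standard commutative-algebra steps. The only mildly delicate point is the identification $\mathbb{L}=\mathbb{K}\otimes_{\mathbb{A}}\Lambda_{(k)}$, but this is precisely what integrality provides.
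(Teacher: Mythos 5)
Your proof is correct, and it reorganizes the argument in a way that is cleaner than the paper's treatment while resting on the same combinatorial input. The paper proves the five items in order: item (1) by noting that powers of $h_a$ decompose over the finite family $\mathcal{I}$ with coefficients in $\mathbb{K}$ (so the image of the evaluation map $\mathbb{K}[T]\to\mathbb{L}$, $T\mapsto h_a$, is finite-dimensional), items (2)--(3) as formal consequences, item (4) by a direct linear-independence argument that clears denominators and invokes the bijection of Remark~\ref{Rem_bij} to reduce to the known $\mathbb{R}$-linear independence of the $k$-Schur basis of $\Lambda_{(k)}$, and item (5) by observing that the characteristic polynomial of multiplication by $h_a$ on $\mathcal{I}$ has entries in $\mathbb{A}$. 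You instead extract the single structural fact that both Corollary~\ref{reducedCase} and Remark~\ref{Rem_bij} encode, namely the free $\mathbb{A}$-module decomposition $\Lambda_{(k)}=\bigoplus_{\widetilde{\lambda}\in\mathcal{P}_{\mathrm{irr}}}\mathbb{A}\cdot s_{\widetilde{\lambda}}^{(k)}$, and then run the standard commutative-algebra machine: finite generation implies integrality (your item (5)), integrality over $\mathbb{A}$ implies algebraicity over $\mathbb{K}$ (item (1)), hence $\mathbb{K}[h_1,\ldots,h_k]$ is a finite-dimensional $\mathbb{K}$-algebra that is a domain, hence a field equal to $\mathbb{L}$ (items (2)--(3)), and finally $\mathbb{L}=S^{-1}\Lambda_{(k)}=\mathbb{K}\otimes_{\mathbb{A}}\Lambda_{(k)}$ so that tensoring the free decomposition with $\mathbb{K}$ hands you the $\mathbb{K}$-basis $\mathcal{I}$ and the degree $k!$ (item (4)). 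What the paper's route buys is a self-contained, hands-on proof that never invokes localization or tensor products and proves linear independence by hand; what your route buys is a single explicit module-theoretic statement from which all five items drop out with no further combinatorial work, making the logical dependencies crystal clear. One small point worth articulating if you write this up: to get a monic relation over $\mathbb{A}$ with nonzero constant term you need $\Lambda_{(k)}=\mathbb{R}[h_1,\ldots,h_k]$ to be an integral domain (so that a vanishing constant term lets you factor out $y$ and lower the degree), which is of course true since it is a polynomial ring, but should be said; likewise the identification $\mathbb{K}\otimes_{\mathbb{A}}\Lambda_{(k)}\cong S^{-1}\Lambda_{(k)}$ and the inclusion $S^{-1}\Lambda_{(k)}\subset\mathrm{Frac}(\Lambda_{(k)})$ rely on this same integrality. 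Your count $|\mathcal{P}_{\mathrm{irr}}|=k!$ is also correct and matches what the paper uses.
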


\begin{proof}
1: For any $a=1,\ldots,k$, consider the evaluation morphism $\theta
_{a}:\mathbb{K}[T]\rightarrow\mathbb{L}$ which associates to any
$P\in\mathbb{K}[T]$, the polynomial $P(h_{a})\in\Lambda_{(k)}\mathbb{=R}%
(h_{1},\ldots,h_{k})$.\ We know that $\{s_{\lambda}^{(k)}\mid\lambda
\in\mathcal{B}_{k}\}$ is a basis of $\Lambda_{(k)}$ over $\mathbb{R}$ thus
each power $h_{a}^{i},i\in\mathbb{Z}_{\geq0}$ decomposes on the basis of
$k$-Schur functions with real coefficients.\ By Corollary \ref{reducedCase},
$h_{a}^{i}$ then decomposes on the family $\mathcal{I}$ with coefficients in
$\mathbb{K}$. Thus, $P(h_{a})$ also decomposes on the family $\mathcal{I}$
with coefficients in $\mathbb{K}$. Since $\mathcal{I}$ is a finite set, this
shows that $\mathrm{Im}(\theta_{a})$ is a finite-dimensional $\mathbb{K}%
$-subspace of $\mathbb{L}$, thus $h_{a}$ is algebraic over $\mathbb{K}$.

2: Since $h_{1},\ldots,h_{k}$ are algebraic over $\mathbb{K}$, we get
$\mathbb{K}[h_{1},\ldots,h_{k}]=\mathbb{K}(h_{1},\ldots,h_{k})\subset
\mathbb{L}$. We also have $\mathbb{L}=\mathbb{R}(h_{1},\ldots,h_{k}%
)\subset\mathbb{K}(h_{1},\ldots,h_{k})$ since $\mathbb{K}$ is an extension of
$\mathbb{R}$. Thus, $\mathbb{L}=\mathbb{K}[h_{1},\ldots,h_{k}]$.

3: This easily follows from 1 and 3$.$

4: By using the same arguments as in the proof of 1, we get that each element
$Q(h_{1},\ldots,h_{k})$ with $Q\in\mathbb{K}[T_{1},\ldots,T_{k}]$ decomposes
on the family $\mathcal{I}$ with coefficients in $\mathbb{K}$. Assume we have
\[
\sum_{\kappa\mid k\text{-irreducible}}c_{\kappa}s_{\kappa}^{(k)}=0
\]
with $c_{\kappa}\in\mathbb{K}=\mathbb{R}(s_{R_{1}},\ldots,s_{R_{k}})$ for any
$k$-irreducible partition $\kappa$. Up to multiplication, we can assume these
coefficients belong in fact to $\mathbb{R}[s_{R_{1}},\ldots,s_{R_{k}}]$. Set
\[
c_{\kappa}=\sum_{\beta\in\mathbb{Z}_{\geq0}^{k}}a_{\beta}^{(\kappa)}s_{R_{1}%
}^{\beta_{1}}\cdots s_{R_{k}}^{\beta_{k}}%
\]
where all the coefficients $a_{\beta}^{(\kappa)}$ are equal to $0$ up to a
finite number (in which case $a_{\beta}^{(\kappa)}$ is real). We get%
\begin{equation}
\sum_{\kappa\mid k\text{-irreducible}}\sum_{\beta\in\mathbb{Z}_{\geq0}^{k}%
}a_{\beta}^{(\kappa)}s_{R_{1}}^{\beta_{1}}\cdots s_{R_{k}}^{\beta_{k}%
}s_{\kappa}^{(k)}=0. \label{CL}%
\end{equation}
By Remark \ref{Rem_bij}, there is a bijection between the set of $k$-bounded
partitions and that of pairs $(\beta,\kappa)$ with $\kappa$ $k$-irreducible
and $\beta\in\mathbb{Z}_{\geq0}^{k}$.\ So equation (\ref{CL}) gives in fact a
linear combination of $k$-Schur functions over $\mathbb{R}$ equal to $0$.
Since we know that the set of $k$-Schur functions is a basis of $\Lambda
_{(k)}$, this imposes that each coefficient $a_{\beta}^{(\kappa)}$ is in fact
equal to $0$.\ So the family $\mathcal{I}$ is a $\mathbb{K}$-basis of
$\mathbb{L}$ and we have $[\mathbb{L}:\mathbb{K}]=\mathrm{card}(\mathcal{I}%
)=k!.$

5: The characteristic polynomial of each $h_{a},a=1,\ldots,k$ belongs to
$\mathbb{A}[T]$ because the multiplication by $h_{a}$ on the basis
$\mathcal{I}$ makes only appear coefficients in $\mathbb{A}$. Thus, each
$h_{a}$ is an integral element of $\Lambda_{(k)}=\mathbb{A}[h_{1},\ldots
,h_{k}]$ over $\mathbb{A}.$
\end{proof}

\subsection{Primitive element}

\label{subsec_primitiveel}By Proposition \ref{PropLK}, $s_{1}=h_{1}$ is
algebraic over $\mathbb{K}$. Denote by $\Pi$ its minimal polynomial.\ Observe
that $\Pi$ is an irreducible polynomial of $\mathbb{K}[T]$. Also write
$\boldsymbol{\Phi}$ for the matrix of the multiplication by $s_{1}$ in
$\mathbb{L}$ in the basis $\mathcal{I}=\{s_{\kappa}^{(k)}\mid\kappa
\in\mathcal{P}_{\mathrm{irr}}\}$ (here, we assume we have fixed once for all a
total order on the set $\mathcal{P}_{\mathrm{irr}}$ of $k$-irreducible
partitions).\ Let $\Xi(T)=\mathrm{det}(TI_{k!}-\boldsymbol{\Phi})$ be the
characteristic polynomial of the matrix $\boldsymbol{\Phi}$. We thus have that
$\Pi$ divides $\Xi$. Moreover both polynomials belong to $\mathbb{A}[T]$ since
the entries of the matrix $\boldsymbol{\Phi}$ are in the ring $\mathbb{A}$. We
have in fact the following stronger proposition.

\begin{proposition}
\ \label{Prop_InvSim}

\begin{enumerate}
\item The invariant factors of the multiplication by $s_{1}$ are all equal to
$\Pi$: there exists an integer $m$ such that $\Xi=\Pi^{m}.$

\item The coefficients of $\Pi$ and $\Xi$ are invariant under the flips of
$s_{R_{a}}$ and $s_{R_{k-a+1}}$ for any $a=1,\ldots,\left\lfloor \frac{k}%
{2}\right\rfloor $.
\end{enumerate}
\end{proposition}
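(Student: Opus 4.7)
For assertion 1, my plan is a direct application of the structure theory for finitely generated modules over the principal ideal domain $\mathbb{K}[T]$. Viewing $\mathbb{L}$ as a $\mathbb{K}[T]$-module via the action $T \cdot x = s_1 x$, the identity $\Pi(s_1) = 0$ shows that $\Pi(T)$ annihilates the whole module. Since $\Pi$ is irreducible in $\mathbb{K}[T]$, the quotient $\mathbb{K}[T]/\Pi$ is a field, namely $\mathbb{K}(s_1)$, and any module over a field is free. Hence $\mathbb{L} \cong (\mathbb{K}[T]/\Pi)^{m}$ as $\mathbb{K}[T]$-modules, with $m = [\mathbb{L} : \mathbb{K}(s_1)]$. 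All invariant factors therefore coincide with $\Pi$, and the characteristic polynomial equals their product $\Xi = \Pi^{m}$. A consistency check from Proposition~\ref{PropLK}(4) gives $k! = [\mathbb{L}:\mathbb{K}] = m \cdot \deg \Pi$, as required.

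For assertion 2, the plan is to exploit the $k$-conjugation involution $\omega_k$ on $\Lambda_{(k)}$. Recall from \cite{LLMSSZ} that $\omega_k$ is an $\mathbb{R}$-algebra automorphism of $\Lambda_{(k)}$ acting on the $k$-Schur basis by $s_\lambda^{(k)} \mapsto s_{\lambda^{\omega_k}}^{(k)}$. Two points must be verified: $\omega_k$ fixes $s_1$, which is immediate since $(1)^{\omega_k} = (1)$, and $\omega_k$ exchanges $s_{R_a}$ with $s_{R_{k-a+1}}$. The latter comes from a short chain-decomposition computation on $R_a = ((k-a+1)^a)$: each part $\lambda_i = k-a+1$ would jump to index $i+a$, which lies beyond the range of nonzero parts, so every part forms a singleton chain. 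Consequently $R_a^{\omega_k}$ has $a$ columns of height $k-a+1$, i.e. $R_a^{\omega_k} = (a^{k-a+1}) = R_{k-a+1}$. Since $R_a$ is a $(k+1)$-core we have $s_{R_a}^{(k)} = s_{R_a}$, and we conclude $\omega_k(s_{R_a}) = s_{R_{k-a+1}}$.

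Now $\omega_k$ extends uniquely to a field automorphism of $\mathbb{L}$ which restricts on $\mathbb{K} = \mathbb{R}(s_{R_1}, \ldots, s_{R_k})$ to the flip $s_{R_a} \leftrightarrow s_{R_{k-a+1}}$. Applying $\omega_k$ coefficientwise to the equality $\Pi(s_1) = 0$ and using $\omega_k(s_1) = s_1$ yields $\Pi^{\omega_k}(s_1) = 0$, where $\Pi^{\omega_k}$ denotes the polynomial with each coefficient replaced by its $\omega_k$-image. Since $\omega_k$ fixes $1$, the polynomial $\Pi^{\omega_k}$ is monic of the same degree as $\Pi$, and the minimality of $\Pi$ forces $\Pi^{\omega_k} = \Pi$. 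Hence every coefficient of $\Pi$ is fixed by the flips, and by assertion~1 the same holds for $\Xi = \Pi^{m}$.

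The main (and essentially only substantive) input is the fact that $\omega_k$ is a ring automorphism of $\Lambda_{(k)}$, which we quote from \cite{LLMSSZ}; once this is in hand, everything else is the standard minimal-polynomial/Galois-style bookkeeping together with the linear-algebra structure argument for invariant factors. I expect the most delicate step in writing out the details to be the explicit identification $\omega_k(R_a) = R_{k-a+1}$, since it requires running the chain decomposition carefully for the rectangle, but this is a finite combinatorial check.
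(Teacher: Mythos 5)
Your proof is correct and follows essentially the same strategy as the paper. Your module-theoretic framing of assertion 1 (viewing $\mathbb{L}$ as a free module over the field $\mathbb{K}[T]/(\Pi)$) is the paper's argument, namely that the invariant factors all divide the irreducible top factor $P_m=\Pi$, expressed a bit more conceptually; it also quietly sidesteps the convention question of whether invariant factors are allowed to be constant. For assertion 2 the paper offers only a one-line gesture (``apply $\omega_k$''), and your write-up supplies the details left implicit there: that $\omega_k$ is an $\mathbb{R}$-algebra involution of $\Lambda_{(k)}$ fixing $s_{(1)}$, the finite chain-decomposition check that $R_a^{\omega_k}=R_{k-a+1}$, and, notably, the deduction for $\Xi$ via $\Xi=\Pi^m$ rather than from $\Xi(s_{(1)})=0$ alone, which by itself does not pin $\Xi$ down (any multiple of $\Pi$ would satisfy that relation). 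So the route is the same as the paper's, but your version is the fully written-out argument.
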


\begin{proof}
1: Write $P_{1},\ldots,P_{m}$ for the invariant factors of $\boldsymbol{\Phi}%
$. We must have $P_{1}/P_{2}/\cdots/P_{m},$ $P_{m}=\Pi$ and $P_{1}P_{2}\cdots
P_{m}=\Xi$. Since $\Pi$ is irreducible, this imposes $P_{1}=\cdots=P_{m}=\Pi$
which gives the assertion.

2: We apply $\omega_{k}$ to each equality $\Pi(s_{(1)})=0$ and $\Xi
(s_{(1)})=0$.
\end{proof}

\bigskip

\begin{example}
\label{ex_k3_Phi}For $k=3,$ we get by listing the $k$-bounded partitions as
$\emptyset,(1),(2),(1,1),(2,1)$ and $(2,1,1)$%
\[
\boldsymbol{\Phi}=\left(
\begin{array}
[c]{cccccc}%
0 & 0 & s_{R_{1}} & s_{R_{3}} & s_{R_{2}} & 0\\
1 & 0 & 0 & 0 & 0 & s_{R_{2}}\\
0 & 1 & 0 & 0 & 0 & s_{R_{3}}\\
0 & 1 & 0 & 0 & 0 & s_{R_{1}}\\
0 & 0 & 1 & 1 & 0 & 0\\
0 & 0 & 0 & 0 & 1 & 0
\end{array}
\right)
\]
This gives $\Xi(T)=T^{6}-2\left(  s_{R_{1}}+s_{R_{3}}\right)  T^{3}-4s_{R_{2}%
}T^{2}+\left(  s_{R_{1}}-s_{R_{3}}\right)  ^{2}.$ Observe the symmetry of
$\boldsymbol{\Phi}$ which will be elucidated in \S \ \ref{SubSec_Invol}.
\end{example}

In Proposition \ref{Prop_InvSim}, we have just used that $\boldsymbol{\Phi}$
is the matrix of the multiplication by $s_{(1)}$ which is algebraic over
$\mathbb{K}$. Given any morphism $\widetilde{\varphi}:\mathbb{A}%
\rightarrow\mathbb{R}$ such that $\widetilde{\varphi}(s_{R_{a}})\geq0$ for any
$a=1,\ldots,k$, the matrix $\widetilde{\varphi}(\boldsymbol{\Phi})$ obtained
by replacing in $\boldsymbol{\Phi}$ each rectangle Schur function $s_{R_{a}}$
by $\widetilde{\varphi}(s_{R_{a}})$ coincides with the matrix $\Phi$ defined
in \S \ \ref{subsec_MatrixFI}.\ Thus $\Phi=\widetilde{\varphi}%
(\boldsymbol{\Phi})$ has nonnegative entries.\ We can now state the
main theorem of this section.

\begin{theorem}
We have $\mathbb{L}=\mathbb{K}(s_{(1)})$, that is $s_{(1)}$ is a primitive
element for $\mathbb{L}$ regarded as an extension of $\mathbb{K}$.
\end{theorem}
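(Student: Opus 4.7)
The plan is to deduce the result from Proposition \ref{Prop_InvSim}, Proposition \ref{irredPhi}, and the Perron-Frobenius theorem. Since $[\mathbb{L}:\mathbb{K}] = k! = \deg \Xi$ and $\Xi = \Pi^{m}$ for some positive integer $m$, it is enough to prove $m = 1$: this will give $[\mathbb{K}(s_{(1)}):\mathbb{K}] = \deg \Pi = k!$, and therefore $\mathbb{K}(s_{(1)}) = \mathbb{L}$.

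I first observe that both $\Xi$ and $\Pi$ lie in $\mathbb{A}[T]$. For $\Xi$ this is immediate, since the entries of $\boldsymbol{\Phi}$ are in $\mathbb{A}$; for $\Pi$, Proposition \ref{PropLK} tells us that $s_{(1)}$ is integral over the UFD $\mathbb{A}$, so its minimal polynomial over $\mathbb{K} = \mathrm{Frac}(\mathbb{A})$ has coefficients in $\mathbb{A}$ by Gauss's lemma. Consequently, for any real vector $\vec{r} = (r_{1},\ldots,r_{k})$, the specialization $s_{R_{a}} \mapsto r_{a}$ yields an identity $\Xi_{\vec{r}}(T) = \Pi_{\vec{r}}(T)^{m}$ in $\mathbb{R}[T]$, where $\Xi_{\vec{r}}$ is the characteristic polynomial of the specialized matrix $\Phi = \widetilde{\varphi}(\boldsymbol{\Phi})$ introduced in \S\ref{subsec_MatrixFI}.

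I now specialize at a $\vec{r}$ with all $r_{a} > 0$. By Proposition \ref{irredPhi} the resulting nonnegative matrix $\Phi$ is irreducible, so the Perron-Frobenius theorem applies: the spectral radius $\rho(\Phi) > 0$ is a simple eigenvalue of $\Phi$, hence a simple root of $\Xi_{\vec{r}}$. Since every root of $\Pi_{\vec{r}}^{m}$ in $\mathbb{C}$ occurs with multiplicity at least $m$, the existence of a simple root of $\Xi_{\vec{r}} = \Pi_{\vec{r}}^{m}$ forces $m = 1$, which concludes the argument.

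The main obstacle was already overcome in establishing Proposition \ref{irredPhi}; once the irreducibility of $\Phi$ is available, the combination with Perron-Frobenius and the factorization $\Xi = \Pi^{m}$ delivers the primitivity of $s_{(1)}$ almost mechanically. The only subtlety worth flagging is verifying that $\Pi$ itself (and not merely $\Xi$) can legitimately be specialized, which is exactly where the integrality assertion of Proposition \ref{PropLK} is used.
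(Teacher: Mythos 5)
Your proof is correct, and it takes a genuinely different (and shorter) route than the paper. The paper establishes $m=1$ by first putting $\boldsymbol{\Phi}$ into Frobenius normal form over $\mathbb{K}$, clearing denominators so the conjugating matrix $P$ has entries in $\mathbb{A}$, and then choosing a specialization $\vec{r}\in\mathbb{R}_{>0}^k$ avoiding the zero locus of $\det P$; after specialization, the block-diagonal form would produce $m$ linearly independent Perron eigenvectors, contradicting the one-dimensionality of the Perron eigenspace (i.e., the paper only uses the \emph{geometric} simplicity part of Perron--Frobenius). You instead use the \emph{algebraic} simplicity of the Perron root directly: since $\Xi=\Pi^m$ is an identity in $\mathbb{A}[T]$ (you correctly note that $\Pi\in\mathbb{A}[T]$ follows from integrality of $s_{(1)}$ over the UFD $\mathbb{A}$ together with Gauss's lemma, an ingredient the paper also tacitly relies on when writing $\mathcal{C}_\Pi$ with $\mathbb{A}$-coefficients), any specialization gives $\Xi_{\vec r}=\Pi_{\vec r}^m$, so every root of $\Xi_{\vec r}$ has multiplicity at least $m$; but for $\vec r\in\mathbb{R}_{>0}^k$ the matrix $\Phi$ is irreducible by Proposition~\ref{irredPhi}, and its Perron root is a simple root of $\Xi_{\vec r}$, forcing $m=1$. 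This avoids the Frobenius reduction and, crucially, the need to choose the specialization point to dodge the hypersurface $\det P=0$: \emph{any} interior point of the positive orthant works. Both approaches rest on the same two pillars (Proposition~\ref{Prop_InvSim} and Perron--Frobenius via Proposition~\ref{irredPhi}); yours reaches the conclusion with noticeably less machinery.
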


\begin{proof}
It suffices to show that $\Pi=\Xi$.\ By Proposition \ref{Prop_InvSim}, we
already know that $\Xi=\Pi^{m}$ with $m\in\mathbb{Z}_{>0}$. Then by Frobenius
reduction, there exists an invertible matrix $P$ with entries in $\mathbb{K}$
such that
\begin{equation}
\boldsymbol{\Phi}=P\left(
\begin{array}
[c]{ccc}%
\mathcal{C}_{\Pi} & \boldsymbol{0} & \boldsymbol{0}\\
\boldsymbol{0} & \ddots & \boldsymbol{0}\\
\boldsymbol{0} & \boldsymbol{0} & \mathcal{C}_{\Pi}%
\end{array}
\right)  P^{-1}, \label{FIB}%
\end{equation}
that is, the matrix $\boldsymbol{\Phi}$ is equivalent to a block diagonal
matrix with $k$ blocks equal to $\mathcal{C}_{\Pi}$ the companion matrix of
the polynomial $\Pi$. By multiplying the columns of the matrix $P$ by elements
of $\mathbb{A}$, one can also assume that the entries of $P$ belong to
$\mathbb{A}$.\ Then we can write $P^{-1}=\frac{1}{\det(P)}Q$ where $Q$ has
also entries in $\mathbb{A}$ and $\det(P)\in\mathbb{A}$ is nonzero. Since
$\det(P)\in\mathbb{A}=\mathbb{R}[s_{R_{1}},\ldots,s_{R_{k}}]$ is nonzero,
there exists a nonzero polynomial $F\in\mathbb{R}[T_{1},\ldots,T_{k}]$ such
that $\det(P)=F(s_{R_{1}},\ldots,s_{R_{k}})$.\ Also a morphism $\widetilde
{\varphi}:\mathbb{A}\rightarrow\mathbb{R}$ such that $\widetilde{\varphi
}(s_{R_{a}})\geq0$ for any $a=1,\ldots,k$ is characterized by the datum of the
$\widetilde{\varphi}(s_{R_{a}})$'s. The polynomial $F$ being nonzero, one can
find $(r_{1},\ldots,r_{k})\in\mathbb{R}_{>0}^{k}$ such that $F(r_{1}%
,\ldots,r_{k})\neq0$.\ For such a $k$-tuple, let us define $\widetilde
{\varphi}$ by setting $\widetilde{\varphi}(s_{R_{a}})=r_{a}$. Then
$\widetilde{\varphi}(\det(P))\neq0$ and we can apply $\widetilde{\varphi}$ to
(\ref{FIB}) which gives%
\[
\Phi=\widetilde{\varphi}(P)\left(
\begin{array}
[c]{ccc}%
\mathcal{C}_{\widetilde{\varphi}(\Pi)} & \boldsymbol{0} & \boldsymbol{0}\\
\boldsymbol{0} & \ddots & \boldsymbol{0}\\
\boldsymbol{0} & \boldsymbol{0} & \mathcal{C}_{\widetilde{\varphi}(\Pi)}%
\end{array}
\right)  \widetilde{\varphi}(P)^{-1}.
\]
The matrix $\Phi$ has nonnegative entries and is irreducible by Lemma
\ref{irredPhi}.\ So, by Perron Frobenius theorem, it admits a unique
eigenvalue $t>0$ of maximal module and the corresponding eigenspace is
one-dimensional. This eigenvalue $t$ should also be a root of $\widetilde
{\varphi}(\Pi)$, thus there is a vector $v\in\mathbb{R}^{d}$ with $d=\deg
(\Pi)$ such that $\mathcal{C}_{\widetilde{\varphi}(\Pi)}v=tv$. Then we get $m$
right eigenvectors of $\Phi$ linearly independent on $\mathbb{R}^{dm}$%
\[
\left(
\begin{array}
[c]{c}%
\boldsymbol{v}\\
\boldsymbol{0}\\
\vdots\\
\boldsymbol{0}%
\end{array}
\right)  ,\left(
\begin{array}
[c]{c}%
\boldsymbol{0}\\
\boldsymbol{v}\\
\vdots\\
\boldsymbol{0}%
\end{array}
\right)  ,\ldots,\left(
\begin{array}
[c]{c}%
\boldsymbol{0}\\
\boldsymbol{0}\\
\vdots\\
\boldsymbol{v}%
\end{array}
\right)  .
\]
Since the eigenspace considered is one-dimensional, this means that $m=1$ and
we are done.
\end{proof}

\begin{corollary}
\label{Cor_Delta}There exist $\Delta\in\mathbb{A}$ and for each irreducible
$k$-bounded partition $\kappa$ a polynomial $P_{\kappa}\in\mathbb{A}[T]$ such
that
\[
s_{\kappa}^{(k)}=\frac{1}{\Delta}P_{\kappa}(s_{(1)}).
\]
In particular, for any morphism $\varphi:\Lambda_{(k)}\rightarrow\mathbb{R}$
such that $\varphi(\Delta)\neq0$ we have%
\[
\varphi(s_{\kappa}^{(k)})=\frac{1}{\varphi(\Delta)}\varphi(P_{\kappa}%
)(\varphi(s_{(1)})).
\]

\end{corollary}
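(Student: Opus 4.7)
The plan is to deduce this corollary directly from the primitive element statement $\mathbb{L}=\mathbb{K}(s_{(1)})$ established just above, by clearing denominators uniformly over the finite basis $\mathcal{I}$.

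First, since $\mathbb{L}=\mathbb{K}(s_{(1)})$ and $[\mathbb{L}:\mathbb{K}]=k!$, every element of $\mathbb{L}$ can be written as a polynomial in $s_{(1)}$ of degree at most $k!-1$ with coefficients in $\mathbb{K}$. Applying this to each $s_{\kappa}^{(k)}\in\Lambda_{(k)}\subset\mathbb{L}$ for $\kappa\in\mathcal{P}_{\mathrm{irr}}$ produces polynomials $Q_{\kappa}\in\mathbb{K}[T]$ with $s_{\kappa}^{(k)}=Q_{\kappa}(s_{(1)})$. Concretely one can obtain $Q_{\kappa}$ by solving the linear system that expresses $s_{\kappa}^{(k)}$ in the $\mathbb{K}$-basis $(1,s_{(1)},s_{(1)}^{2},\ldots,s_{(1)}^{k!-1})$; equivalently the change-of-basis matrix between $\mathcal{I}$ and the power basis of $s_{(1)}$ is invertible over $\mathbb{K}$ by the theorem.

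Next, I would reduce all the $Q_{\kappa}$ to a single common denominator in $\mathbb{A}=\mathbb{R}[s_{R_{1}},\ldots,s_{R_{k}}]$. Each coefficient of each $Q_{\kappa}$ is an element of $\mathbb{K}=\mathrm{Frac}(\mathbb{A})$, hence of the form $a/b$ with $a,b\in\mathbb{A}$ and $b\neq 0$. Since $\mathcal{I}$ is finite ($k!$ elements) and each $Q_{\kappa}$ has finitely many coefficients, taking $\Delta\in\mathbb{A}$ to be the product of all the denominators that appear yields polynomials $P_{\kappa}\in\mathbb{A}[T]$ with $\Delta\cdot s_{\kappa}^{(k)}=P_{\kappa}(s_{(1)})$ in $\Lambda_{(k)}$, which is exactly the sought identity.

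Finally, for the specialization: any $\mathbb{R}$-algebra morphism $\varphi:\Lambda_{(k)}\to\mathbb{R}$ restricts to a morphism $\mathbb{A}\to\mathbb{R}$, so applying $\varphi$ to the identity $\Delta\cdot s_{\kappa}^{(k)}=P_{\kappa}(s_{(1)})$ gives $\varphi(\Delta)\,\varphi(s_{\kappa}^{(k)})=\varphi^{\mathbb{A}}(P_{\kappa})(\varphi(s_{(1)}))$, where $\varphi^{\mathbb{A}}(P_{\kappa})\in\mathbb{R}[T]$ denotes the polynomial obtained by applying $\varphi$ to the coefficients (which is what the stated formula $\varphi(P_{\kappa})\varphi(s_{(1)})$ abbreviates). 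When $\varphi(\Delta)\neq 0$ we may divide to isolate $\varphi(s_{\kappa}^{(k)})$. There is essentially no obstacle here beyond bookkeeping; the only nonobvious ingredient is the primitivity $\mathbb{L}=\mathbb{K}(s_{(1)})$, which is already in hand, so the corollary is really just the statement that a primitive-element expression whose coefficients lie in $\mathrm{Frac}(\mathbb{A})$ can be cleared of denominators uniformly across the finite basis $\mathcal{I}$.
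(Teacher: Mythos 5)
Your proof is correct and follows the same overall strategy as the paper's: exploit the primitive-element fact that $\{1,s_{(1)},\ldots,s_{(1)}^{k!-1}\}$ is a $\mathbb{K}$-basis of $\mathbb{L}$, write each $s_{\kappa}^{(k)}$ in that basis, and clear denominators uniformly over the finite set $\mathcal{I}$. The one substantive difference is in the choice of $\Delta$. You take $\Delta$ to be the product of all denominators that appear in the coefficients of the $Q_{\kappa}$, which certainly works. The paper instead forms the change-of-basis matrix $M$ whose columns express $s_{(1)}^{i}$ on the basis $\mathcal{I}$ (its entries lie in $\mathbb{A}$, since the multiplication-by-$s_{(1)}$ matrix $\boldsymbol{\Phi}$ has entries in $\mathbb{A}$), and sets $\Delta=\det M$ with $P_{\kappa}$ read off from the adjugate $N=\operatorname{adj}(M)$ via $M^{-1}=\frac{1}{\det M}N$. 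For the bare statement of the corollary both choices are valid; however, the paper's choice is the canonical (and essentially minimal) one, and that specificity matters downstream. In particular, Proposition \ref{primitive_Specialization}(2) uses precisely that $\Delta$ equals the determinant of the transition matrix between $\{\overline{s}_{1}^{r}\}$ and $\{\overline{s}_{\kappa}\}$ to conclude that $\overline{\Lambda}_{(k)}=\mathbb{R}[\overline{s}_{1}]$ if and only if $\overline{\Delta}\neq 0$, and the remark following the corollary asserts the equality of $\mathbb{A}$-modules $\Lambda_{(k)}=\frac{1}{\Delta}\mathbb{A}[s_{1}]$. With your larger $\Delta$ one would generically have only an inclusion, and the "if" direction of that equivalence would fail. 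So your proof establishes the corollary as stated, but you should be aware that the paper's particular $\Delta=\det M$ is the one actually carried forward in the arguments that follow.
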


\begin{proof}
Since $s_{(1)}$ is a primitive element for $\mathbb{L}$ regarded as an
extension of $\mathbb{K}$, $\{1,s_{(1)},\ldots,s_{(1)}^{k!-1}\}$ is a
$\mathbb{K}$-basis of $\mathbb{L}$.\ It then suffices to consider the matrix
$M$ whose columns are the vectors $s_{(1)}^{i},i=0,\ldots,k!-1$ expressed on
the basis $\mathcal{I}=\{s_{\kappa}^{(k)},\kappa\in\mathcal{P}_{\mathrm{irr}%
}\}$. Its inverse can be written $M^{-1}=\frac{1}{\det M}N$ where the entries
of $N$ belongs to $\mathbb{A}$. So we have $\Delta=\det(M)$ and the entries on
each columns of the matrix $N$ give the polynomials $P_{\kappa},\kappa
\in\mathcal{P}_{\mathrm{irr}}$.
\end{proof}

\begin{remark}
In fact we get the equality of $\mathbb{A}$-modules $\Lambda_{(k)}=\frac
{1}{\Delta}\mathbb{A}[s_{1}]$.\ In particular, the polynomial $\Delta$ (once
assumed monic) only depends on $\Lambda_{(k)}$ and $\mathbb{A}[s_{1}]$ and not
on the choice of the bases considered in these $\mathbb{A}$-modules. Indeed a
basis change will multiply $\Delta$ by an invertible element in $\mathbb{A}$,
that is by a nonzero real.
\end{remark}

\begin{example}
For $k=2$ we get
\[
\boldsymbol{\Phi}=\left(
\begin{array}
[c]{cc}%
0 & s_{R_{1}}+s_{R_{2}}\\
1 & 0
\end{array}
\right)  \text{ and }M=I_{2}.
\]

\end{example}

\begin{example}
\label{Examk=3}For $k=3$ and with the same convention as in Example
\ref{ex_k3_Phi}, we get%
\begin{multline*}
M=\left(
\begin{array}
[c]{cccccc}%
1 & 0 & 0 & s_{R_{1}}+s_{R_{3}} & 2s_{R_{2}} & 0\\
0 & 1 & 0 & 0 & s_{R_{1}}+s_{R_{3}} & 4s_{R_{2}}\\
0 & 0 & 1 & 0 & 0 & s_{R_{1}}+3s_{R_{3}}\\
0 & 0 & 1 & 0 & 0 & 3s_{R_{1}}+s_{R_{3}}\\
0 & 0 & 0 & 2 & 0 & 0\\
0 & 0 & 0 & 0 & 2 & 0
\end{array}
\right)  \text{ and }\\
M^{-1}=\left(
\begin{array}
[c]{cccccc}%
1 & 0 & 0 & 0 & -\frac{s_{R_{1}}+s_{R_{3}}}{2} & -s_{R_{2}}\\
0 & 1 & \frac{2s_{R_{2}}}{s_{R_{1}}-s_{R_{3}}} & \frac{2s_{R_{2}}}{s_{R_{3}%
}-s_{R_{1}}} & 0 & -\frac{s_{R_{1}}+s_{R_{3}}}{2}\\
0 & 0 & \frac{3s_{R_{1}}+s_{R_{3}}}{2s_{R_{1}}-2s_{R_{3}}} & \frac{s_{R_{1}%
}+3s_{R_{3}}}{2s_{R_{3}}-2s_{R_{1}}} & 0 & 0\\
0 & 0 & 0 & 0 & \frac{1}{2} & 0\\
0 & 0 & 0 & 0 & 0 & \frac{1}{2}\\
0 & 0 & \frac{-1}{2s_{R_{1}}-2s_{R_{3}}} & \frac{-1}{2s_{R_{3}}-2s_{R_{1}}} &
0 & 0
\end{array}
\right)  \allowbreak.
\end{multline*}
So in particular, $s_{(2,1,1)}^{(3)}=\frac{1}{2}s_{(1)}^{4}-\frac{1}%
{2}(s_{R_{1}}+s_{R_{3}})s_{(1)}-s_{R_{2}}$.
\end{example}

\subsection{Algebraic variety associated to fixed values of rectangles}

Recall that $\Lambda_{(k)}=\mathbb{R}[h_{1},\ldots,h_{k}]$ and each rectangle
Schur polynomial can be written $s_{R_{a}}=JT_{a}(h_{1},\ldots,h_{k})$ for any
$a=1,\ldots,k$ where $JT_{a}\in\mathbb{R}[h_{1},\ldots,h_{k}]$ is given by the
Jacobi-Trudi determinantal formula. Consider $\vec{r}=(r_{1},\ldots,r_{k}%
)\in\mathbb{R}_{\geq0}^{k}$.

\begin{definition}
\label{Def_Variety}Let $\mathcal{R}_{\vec{r}}$ be the algebraic variety of
$\mathbb{R}^{k}$ defined by the equations $s_{R_{a}}=r_{a}$ for any
$a=1,\ldots,k$.
\end{definition}

We can consider the algebra $\overline{\Lambda}_{(k)}:=\Lambda_{(k)}/J$ where
$J$ is the ideal generated by the relations $s_{R_{a}}=r_{a}$ for any
$a=1,\ldots,k$. Write $\overline{\varphi}:\Lambda_{(k)}\rightarrow
\Lambda_{(k)}/J$ for the canonical projection obtained by specializing in
$\Lambda_{(k)}$ each rectangle Schur function $s_{R_{a}}$ to $r_{a}$.\ We
shall write for short $\overline{b}=\overline{\varphi}(b)$ for any
$b\in\Lambda_{(k)}$. Clearly $\overline{\Lambda}_{(k)}=\Lambda_{(k)}/J$ is a
finite-dimensional $\mathbb{R}$-algebra and $\overline{\Lambda}_{(k)}%
=\mathrm{vect}\langle\overline{s}_{\kappa}\mid\kappa$ irreducible$\rangle$.
The following proposition shows that the non-cancellation of $\Delta$ can be
naturally interpreted as a condition for the multiplication by $\overline
{s}_{1}$ to be a cyclic morphism in $\overline{\Lambda}_{(k)}$.

\begin{proposition}
\label{primitive_Specialization} \ 

\begin{enumerate}
\item The algebra $\overline{\Lambda}_{(k)}$ has dimension $k!$ over
$\mathbb{R}$ and $\{\overline{s}_{\kappa}\mid\kappa\in\mathcal{P}%
_{\mathrm{irr}}\}$ is a basis of $\overline{\Lambda}_{(k)}$.

\item We have $\overline{\Lambda}_{(k)}=\mathbb{R}[\overline{s}_{1}]$ if and
only if $\overline{\Delta}\neq0$.
\end{enumerate}
\end{proposition}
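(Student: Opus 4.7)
The plan is to reduce both assertions to facts already established. For part (1), I would leverage the free $\mathbb{A}$-module structure of $\Lambda_{(k)}$. By Corollary~\ref{reducedCase} together with Remark~\ref{Rem_bij}, every $k$-bounded partition decomposes uniquely as a product of rectangle Schur functions multiplied by an irreducible $k$-Schur function, so $\Lambda_{(k)}$ is a free $\mathbb{A}$-module with basis $\mathcal{I}=\{s_\kappa^{(k)}\mid\kappa\in\mathcal{P}_{\mathrm{irr}}\}$. Since the $s_{R_a}$ are algebraically independent over $\mathbb{R}$ by Proposition~\ref{PropLK}, the quotient of $\mathbb{A}=\mathbb{R}[s_{R_1},\ldots,s_{R_k}]$ by the maximal ideal $(s_{R_a}-r_a:a=1,\ldots,k)$ is simply $\mathbb{R}$, so tensoring yields
\[
\overline{\Lambda}_{(k)}=\Lambda_{(k)}\otimes_{\mathbb{A}}\mathbb{R},
\]
which is a free $\mathbb{R}$-module of the same rank as the free $\mathbb{A}$-module $\Lambda_{(k)}$, with basis the classes $\{\overline{s}_\kappa^{(k)}\}_{\kappa\in\mathcal{P}_{\mathrm{irr}}}$. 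By Proposition~\ref{PropLK}~(4) this rank equals $k!$.

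For part (2), I would specialize the primitive element data of the preceding corollary. That corollary produces a matrix $M$ with entries in $\mathbb{A}$ whose columns express $1,s_{(1)},\ldots,s_{(1)}^{k!-1}$ in the basis $\mathcal{I}$, and whose determinant is $\Delta$. Applying the ring homomorphism $\overline{\varphi}$ entrywise gives a real matrix $\overline{M}$ whose columns express $1,\overline{s}_{(1)},\ldots,\overline{s}_{(1)}^{\,k!-1}$ in the basis $\{\overline{s}_\kappa^{(k)}\}$ of $\overline{\Lambda}_{(k)}$, and which satisfies $\det(\overline{M})=\overline{\Delta}$.

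The equivalence is then purely linear-algebraic inside the $k!$-dimensional commutative $\mathbb{R}$-algebra $\overline{\Lambda}_{(k)}$. If $\overline{\Delta}\neq 0$ then $\overline{M}$ is invertible, so $\{1,\overline{s}_{(1)},\ldots,\overline{s}_{(1)}^{\,k!-1}\}$ is an $\mathbb{R}$-basis of $\overline{\Lambda}_{(k)}$, forcing $\mathbb{R}[\overline{s}_{(1)}]=\overline{\Lambda}_{(k)}$. Conversely, if $\mathbb{R}[\overline{s}_{(1)}]=\overline{\Lambda}_{(k)}$ then $\mathbb{R}[\overline{s}_{(1)}]\cong\mathbb{R}[T]/(q)$ for $q$ the minimal polynomial of $\overline{s}_{(1)}$, and a dimension count forces $\deg q=k!$; this means the powers $1,\overline{s}_{(1)},\ldots,\overline{s}_{(1)}^{\,k!-1}$ are linearly independent, so $\overline{M}$ is invertible and $\overline{\Delta}\neq 0$. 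The only bookkeeping point to check is that $\overline{\det(M)}=\det(\overline{M})$; this is immediate because the determinant is polynomial in the entries and $\overline{\varphi}$ is a ring homomorphism, so no genuine obstacle arises.
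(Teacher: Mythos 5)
Your proof is correct, and your argument for part~(2) is essentially the paper's (both identify $\overline{\Delta}$ as the determinant of the change of basis between $\{\overline{s}_\kappa\}$ and the powers of $\overline{s}_{(1)}$, so its non-vanishing is exactly linear independence of the powers). Where you diverge is part~(1): you observe that Corollary~\ref{reducedCase} together with Remark~\ref{Rem_bij} makes $\Lambda_{(k)}$ a \emph{free} $\mathbb{A}$-module of rank $k!$ on the basis $\mathcal{I}$, and then obtain $\overline{\Lambda}_{(k)}=\Lambda_{(k)}\otimes_{\mathbb{A}}(\mathbb{A}/\mathfrak{m})$ as a free $\mathbb{R}$-module of the same rank, with the induced basis. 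The paper instead argues directly inside $\Lambda_{(k)}$: it writes down an explicit $\mathbb{R}$-spanning set of the ideal $J$ adapted to the decomposition $\lambda\mapsto(\widetilde\lambda,p)$, lifts a relation $\sum c_\kappa\overline{s}_\kappa=0$ to a relation among the $s_\lambda^{(k)}$, and concludes $c_\kappa=0$ from the fact that $\{s_\lambda^{(k)}\}$ is an $\mathbb{R}$-basis of $\Lambda_{(k)}$. The two proofs are substantively the same computation; yours packages it as a base-change statement, which is cleaner and makes the rank identification $k!$ fall out immediately from Proposition~\ref{PropLK}(4) rather than being re-derived, whereas the paper's version is self-contained and does not presuppose the reader is comfortable with freeness and tensoring over $\mathbb{A}$. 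One small point worth making explicit in your write-up: the freeness of $\Lambda_{(k)}$ over $\mathbb{A}$ on $\mathcal{I}$ is not literally stated in the paper (Proposition~\ref{PropLK}(4) is phrased at the level of the fraction fields $\mathbb{K}\subset\mathbb{L}$), so you should say a word about why the monomials $\prod_a s_{R_a}^{p_a}s_\kappa^{(k)}$ are $\mathbb{R}$-linearly independent --- this is Remark~\ref{Rem_bij} plus the fact that $\{s_\lambda^{(k)}\}$ is an $\mathbb{R}$-basis of $\Lambda_{(k)}$, which is exactly the ingredient the paper uses.
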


\begin{proof}
1:
For any $k$-bounded partition $\lambda$, write $\lambda=R_{1}^{m_{1}}%
\sqcup\cdots\sqcup R_{k}^{m_{k}}\sqcup\kappa(\lambda)$ for its decomposition into
rectangles and irreducible partition, and set $u(\lambda)=r_{1}^{m_{1}}\cdots
r_{k}^{m_{k}}$.\ If all $m_{i}$ are equal to zero for $1\leq i\leq k$, we set $u(\lambda)=0$. Let us prove that $J$ regarded as a $\mathbb{R}$-vector space has basis
$\{s_{\lambda}^{(k)}-u(\lambda)s_{\kappa(\lambda)}^{(k)}\mid\lambda\in \mathcal{B}_{k}\setminus \mathcal{P}_{\mathrm{irr}}\}$. First, the latter set is linearly independent and is thus a basis of a vector subspace $V\subset \Lambda_{(k)}$. It remains to prove that $V=J$. 

Let $\lambda=R_{1}^{m_{1}}%
\sqcup\cdots\sqcup R_{k}^{m_{k}}\sqcup\kappa(\lambda)$ be a reducible $k$-bounded partition and let $\kappa'$ be a $k$-bounded partition. Then,
\begin{align*}
s_{\kappa'}^{(k)} (s_{\lambda}^{(k)}-u(\lambda)s_{\kappa(\lambda)}^{(k)})=&s_{\kappa'}^{(k)}s_{\kappa(\lambda)}^{(k)}\left(s_{R_{1}^{m_{1}}%
\sqcup\cdots\sqcup R_{k}^{m_{k}}}^{(k)}-u(R_{1}^{m_{1}}%
\sqcup\cdots\sqcup R_{k}^{m_{k}})\right)\\
=&\sum_{\nu\in \mathcal{B}_{k}}c_{\kappa(\lambda),\kappa'}^\nu s_{\nu}^{(k)}\left(s_{R_{1}^{m_{1}}%
\sqcup\cdots\sqcup R_{k}^{m_{k}}}^{(k)}-u(R_{1}^{m_{1}}%
\sqcup\cdots\sqcup R_{k}^{m_{k}})\right)
\end{align*}
Let us write $\lambda'=R_{1}^{m_{1}}%
\sqcup\cdots\sqcup R_{k}^{m_{k}}$. For each $\nu\in\mathcal{B}_{k}$ written $\nu=R_{1}^{n_{1}}%
\sqcup\cdots\sqcup R_{k}^{n_{k}}\sqcup\kappa(\nu):=\nu'\sqcup \kappa(\nu)$, we have 
\begin{align*}
s_{\nu}^{(k)}(s_{\lambda'}^{(k)}-u(\lambda'))=&s_{\kappa(\nu)}^{(k)}(s_{\lambda'\sqcup \nu'}^{(k)}-u(\lambda')s_{\nu'}^{(k)})\\
=&s_{\kappa(\nu)}^{(k)}\left(s_{\lambda'\sqcup \nu'}^{(k)}-u(\lambda'\sqcup \nu')+u(\lambda'\sqcup \nu')-u(\lambda')s_{\nu'}^{(k)}\right)\\
=&s_{\kappa(\nu)}^{(k)}\left(s_{\lambda'\sqcup \nu'}^{(k)}-u(\lambda'\sqcup \nu')-u(\lambda')(s_{\nu'}^{(k)} -u(\nu'))\right)\\
=&s_{\lambda'\sqcup \nu'\sqcup \kappa(\nu)}^{(k)}-u(\lambda'\sqcup \nu'\sqcup \kappa(\nu))s_{\kappa(\nu)}^{(k)}-u(\lambda')(s_{\nu}^{(k)}-u(\nu)s_{\kappa(\nu)}^{(k)})\in V,
\end{align*}
where we have used $u(\lambda'\sqcup \nu')=u(\lambda'\sqcup \nu'\sqcup \kappa(\nu))$ on the last equality. Hence, $s_{\kappa'}^{(k)} (s_{\lambda}^{(k)}-u(\lambda)s_{\kappa(\lambda)}^{(k)})\in V$ and $V$ is an ideal of $\Lambda_{(k)}$. Since all generator $s_{R_{a}}-r_{a}$ of $J$ are in $V$, we have in particular $J\subset V$. 

Let us show that $(s_{\lambda}^{(k)}-u(\lambda)s_{\kappa(\lambda)}^{(k)})\in J$ for all reducible $k$-bounded partition $\lambda=R_{1}^{m_{1}}%
\sqcup\cdots\sqcup R_{k}^{m_{k}}\sqcup\kappa(\lambda)$. Since $(s_{\lambda}^{(k)}-u(\lambda)s_{\kappa(\lambda)}^{(k)})=s_{\kappa(\lambda)}^{(k)}(s_{R_{1}^{m_{1}}%
\sqcup\cdots\sqcup R_{k}^{m_{k}}}^{(k)}-u(R_{1}^{m_{1}}%
\sqcup\cdots\sqcup R_{k}^{m_{k}}))$, we just have to show that $s_{R_{1}^{m_{1}}%
\sqcup\cdots\sqcup R_{k}^{m_{k}}}^{(k)}-u(R_{1}^{m_{1}}%
\sqcup\cdots\sqcup R_{k}^{m_{k}})\in J$ for all tuple $\vec{m}=(m_{1},\ldots,m_{k})$ different from zero. We prove this by induction on $l(\vec{m})=\sum_{i}^k m_{i}$. For $\vec{m}$ such that $l(\vec{m})=1$, this is true. Suppose that $\vec{m}$ is such that $l(\vec{m})>1$ and assume without loss of generality that $m_{1}\geq 1$. Then,
\begin{align*}
s_{R_{1}^{m_{1}}%
\sqcup\cdots\sqcup R_{k}^{m_{k}}}^{(k)}  -u(R_{1}^{m_{1}}%
\sqcup\cdots\sqcup R_{k}^{m_{k}})=s_{R_{1}}& (s_{R_{1}^{m_{1}-1}%
\sqcup\cdots\sqcup R_{k}^{m_{k}}}^{(k)} -u(R_{1}^{m_{1}-1}%
\sqcup\cdots\sqcup R_{k}^{m_{k}}))\\
&+u(R_{1}^{m_{1}-1}%
\sqcup\cdots\sqcup R_{k}^{m_{k}})(s_{R_{1}}-u(R_{1}))\in J,
\end{align*}
where we have used the induction hypothesis to prove that the first term of the right hand side is in $J$. Hence, $V\subset J$, and finally $V=J$.

Observe also that $\{s_{\lambda}^{(k)}-u(\lambda)s_{\kappa(\lambda)}^{(k)}\mid\lambda$ $k$-bounded partition$\}$ is a basis of $\Lambda_{(k)}$, since it is a triangular change of basis from the standard basis $\{ s_{\lambda}^{k}, \lambda$ $k$-bounded partition$\}$. Hence, we have a decomposition 
$$\Lambda_{(k)}=\mathrm{Vect}\langle s_{\lambda}^{(k)}-u(\lambda)s_{\kappa(\lambda)}^{(k)}\mid\lambda\in \mathcal{P}_{\mathrm{irr}}\rangle\oplus J.$$
We deduced that a basis of $\Lambda_{(k)}/J$ is given by $\{ \overline{\varphi}(s_{\lambda}^{(k)}-u(\lambda)s_{\kappa(\lambda)}^{(k)}),\mid\lambda\in\mathcal{P}_{\mathrm{irr}}\}$. Since $u(\lambda)=0$ and $\lambda=\kappa(\lambda)$ when $\lambda$ is irreducible, we get that $\{ \overline{s}_{\kappa},\mid\kappa\in\mathcal{P}_{\mathrm{irr}}\}$ is a basis of $\Lambda_{(k)}/J$.

2: We have $\overline{\Delta}\neq0$ if and only if $\{\overline{s}_{1}^{r}%
\mid0\leq r\leq k!-1\}$ is a basis of $\overline{\Lambda}_{(k)}$ since
$\{\overline{s}_{\kappa}\mid\mathcal{P}_{\mathrm{irr}}\}$ is a basis of
$\overline{\Lambda}_{(k)}$ and $\overline{\Delta}$ is then the determinant
between the two bases.
\end{proof}

The following proposition is classical, we prove it for completion.

\begin{proposition}
\label{Prop_Variety}The algebraic variety $\mathcal{R}_{\vec{r}}$ is finite.
\end{proposition}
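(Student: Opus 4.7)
The plan is to exploit the integrality statement already established in Proposition \ref{PropLK}(5), namely that $\Lambda_{(k)}$ is an integral extension of $\mathbb{A}=\mathbb{R}[s_{R_1},\ldots,s_{R_k}]$. Since each generator $h_a$ of $\Lambda_{(k)}$ is integral over $\mathbb{A}$, there exists, for every $a=1,\ldots,k$, a monic polynomial $P_a(T)\in\mathbb{A}[T]$ such that $P_a(h_a)=0$ in $\Lambda_{(k)}$.

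Next I would apply the specialization morphism $\overline{\varphi}:\Lambda_{(k)}\to\overline{\Lambda}_{(k)}$, which sends $s_{R_a}$ to $r_a$. The coefficients of $P_a$ lie in $\mathbb{A}$, so their images under $\overline{\varphi}$ lie in $\mathbb{R}$, yielding a monic polynomial $\overline{P}_a(T)\in\mathbb{R}[T]$ such that $\overline{P}_a(h_a)=0$ on the variety $\mathcal{R}_{\vec r}$. Since $\overline{P}_a$ is a monic (hence nonzero) polynomial with real coefficients, it has only finitely many real roots, so the $a$-th coordinate of any point of $\mathcal{R}_{\vec r}$ lies in a finite subset of $\mathbb{R}$. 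This holds for every $a=1,\ldots,k$, so $\mathcal{R}_{\vec r}$ is contained in a finite product of finite sets, and is therefore finite.

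As a sanity check one could instead invoke the finite-dimensionality of $\overline{\Lambda}_{(k)}$ established in Proposition \ref{primitive_Specialization}(1): the algebra $\overline{\Lambda}_{(k)}$ has $\mathbb{R}$-dimension $k!$, hence is Artinian, so it has only finitely many maximal ideals; each point of $\mathcal{R}_{\vec r}$ corresponds to an $\mathbb{R}$-algebra morphism $\overline{\Lambda}_{(k)}\to\mathbb{R}$, equivalently to a maximal ideal with residue field $\mathbb{R}$, so there are only finitely many. Both arguments are short and I do not anticipate a real obstacle; the only small point requiring care is the remark that the monic polynomial $P_a$ guaranteed by integrality has \emph{coefficients in $\mathbb{A}$}, so that the specialization $\overline{\varphi}$ can be applied to these coefficients without any denominator issue.
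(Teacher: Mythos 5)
Your main argument is correct and takes a genuinely different route from the paper. The paper passes to the complexified variety $\mathcal{R}_{\vec r}^{\mathbb{C}}$, decomposes it into irreducible components, observes that each coordinate ring $\mathbb{C}[h_1,\ldots,h_k]/J_j$ is a finite-dimensional integral domain (hence a field), and then invokes Hilbert's Nullstellensatz to conclude each component is a single point. You instead exploit Proposition \ref{PropLK}(5) directly: each $h_a$ satisfies a monic polynomial $P_a(T)\in\mathbb{A}[T]$ (indeed the characteristic polynomial of multiplication by $h_a$ on the basis $\mathcal I$), and specializing $s_{R_b}\mapsto r_b$ turns $P_a$ into a monic real polynomial $\overline{P}_a$ annihilating the $a$-th coordinate of any point of $\mathcal{R}_{\vec r}$. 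This is more elementary (no Nullstellensatz, no passage to $\mathbb{C}$, no irreducible component decomposition), it works directly over $\mathbb{R}$, and it uses the integrality statement the paper records but never exploits for this proposition. The one detail you flag — that the coefficients of $P_a$ lie in $\mathbb{A}$ so specialize cleanly to reals — is exactly the point that makes it work, and it holds because the matrix of multiplication by $h_a$ on $\mathcal I$ has entries in $\mathbb{A}$. Your sanity-check argument (finite-dimensionality of $\overline{\Lambda}_{(k)}$ implies an Artinian algebra with finitely many maximal ideals, hence finitely many $\mathbb{R}$-points) is also valid and is essentially the paper's argument repackaged in commutative-algebra language rather than variety language.
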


\begin{proof}
It suffices to see that the algebraic variety $\mathcal{R}_{\vec{r}%
}^{\mathbb{C}}$ of $\mathbb{C}^{k}$ defined by the equations $s_{R_{a}}=r_{a}$
for any $a=1,\ldots,k$ is finite.\ We can decompose $\mathcal{R}_{\vec{r}%
}^{\mathbb{C}}=V_{1}\cup\cdots\cup V_{m}$ into its irreducible components.\ To
each such component $V_{j}$ is associated a prime ideal $J_{j}$ and we have
$J=J_{1}\cap\cdots\cap J_{m}$. Therefore for any $j=1,\ldots,m,$
$\mathbb{C}[h_{1},\ldots,h_{k}]/J_{j}$ is a finite-dimensional algebra
(because $\mathbb{C}[h_{1},\ldots,h_{k}]/J$ is) which is an integral domain.
So $\mathbb{C}[h_{1},\ldots,h_{k}]/J_{j}$ is in fact a field and $J_{j}$ is
maximal in $\mathbb{C}[h_{1},\ldots,h_{k}]$. By using Hilbert's
Nullstellensatz's theorem, we obtain that each $V_{j}$ reduces to a point, so
$\mathcal{R}_{\vec{r}}^{\mathbb{C}}$ is finite.
\end{proof}

\section{Nonnegative morphisms on $\Lambda_{(k)}$}

\subsection{Nonnegative morphisms with $\Phi$ irreducible}

We show now that when the matrix $\Phi$ introduced in
\S \ \ref{subsec_MatrixFI} is irreducible, the values of $\varphi$ on the
rectangle Schur functions $s_{R_{a}},$ $1\leq a\leq k$ determine completely
the morphism $\varphi$. Recall we have denoted by $\mathbb{A}=\mathbb{R}%
[s_{R_{1}},\ldots,s_{R_{k}}]$ the subalgebra of $\Lambda_{(k)}=\mathbb{R}%
[h_{1},\ldots,h_{k}]$ generated by the $k$-rectangle Schur functions. Also we
have $\mathcal{I}=\{s_{\kappa}^{(k)}\mid\kappa\in\mathcal{P}_{\mathrm{irr}}%
\}$. Set $\mathcal{R}=\{s_{R_{a}}\}_{1\leq a\leq k}$.

\begin{theorem}
\label{principalDomain} \ 

\begin{enumerate}
\item Let $\varphi:\mathbb{A}\rightarrow\mathbb{R}$ be a morphism, nonnegative
on $\mathcal{R}$ and such that its associated matrix $\Phi$ is irreducible.
Then there exists a unique morphism $\tilde{\varphi}:\Lambda_{(k)}%
\longrightarrow\mathbb{R}_{\geq0}$ extending $\varphi$, nonnegative on the
$k$-Schur functions and positive on $\mathcal{I}$.

\item A positive morphism $\varphi:\Lambda_{(k)}\longrightarrow\mathbb{R}$ is
uniquely determined by its values on $\mathcal{R}$.
\end{enumerate}
\end{theorem}

\begin{proof}
(1): Let us prove the existence of $\tilde{\varphi}$. Set $(r_{1},\ldots,
r_{k})=(\varphi(s_{R_{1}}),\ldots,\varphi(s_{R_{k}}))$, and assume first that
$\Delta(r_{1},\ldots,r_{k})\not =0$ where $\Delta$ is the polynomial defined
in Corollary \ref{Cor_Delta}. We have to show that there exists a morphism
$\widetilde{\varphi}$ on $\Lambda_{(k)}$ such that $\widetilde{\varphi}$ is
positive on $\mathcal{I}$ and $\widetilde{\varphi}(s_{R_{a}})=r_{a}$ for
$a=1,\ldots,k$. The set of morphisms from $\Lambda_{(k)}$ to $\mathbb{R}$
which takes values $r_{a}$ on $s_{R_{a}}$ for $a=1\ldots k$ is in bijection
with the set of morphisms from $\overline{\Lambda}_{(k)}$ to $\mathbb{R}.$
Here recall that $\overline{\Lambda}_{(k)}=\Lambda_{(k)}/J$ with $J$ the ideal
generated by the relations $s_{R_{a}}=r_{a}$ for $a=1,\ldots,k$. Since we have
assumed $\bar{\Delta}\not =0$, Proposition \ref{primitive_Specialization}
yields that $\overline{\Lambda}_{(k)}=\mathbb{R}[\overline{s}_{1}]$. There
exists one morphism from $\mathbb{R}[\overline{s}_{1}]$ to $\mathbb{R}$ for
each real root of the minimal polynomial of $\overline{s}_{1}$, which is
$\overline{\Xi}$ because $\deg(\overline{\Xi})=k!=\dim(\overline{\Lambda
}_{(k)})$. Let $t$ be the root of $\overline{\Xi}$ with maximal modulus. It is
positive since $\overline{\Xi}$ is the characteristic polynomial of the
irreducible matrix $\Phi_{(r_{1},\ldots,r_{k})}$, and moreover it is the
Perron-Frobenius eigenvalue of $\Phi$. Then, the specialization $\overline
{s}_{1}=t$ yields a morphism from $\overline{\Lambda}_{(k)}$ to $\mathbb{R}$,
and by extension a morphism $\widetilde{\varphi}$ from $\Lambda_{(k)}$ to
$\mathbb{R}$. In particular, the equality (\ref{PieriPhi}) holds for
$\widetilde{\varphi}$, and the vector $X=(\widetilde{\varphi}(s_{\lambda
}^{(k)}))_{\lambda\in\mathcal{P}_{\mathrm{irr}}}$ is the eigenvector of $\Phi$
corresponding to the Perron Frobenius eigenvalue $t$ and such that
$X(\emptyset)=1$. Therefore, $\widetilde{\varphi}$ is positive on
$\mathcal{I}$. So we have proved the existence of an extension of $\varphi$
with the right properties in the case where $\Delta\neq0$.

We now drop the hypothesis $\Delta=0$. Consider $\vec{r}=(r_{1},\ldots,r_{k})$
such that the matrix $\Phi$ is irreducible. For any extension $\widetilde
{\varphi}$ (if any) of $\varphi$ positive on $\mathcal{I}$, the equality
(\ref{PieriPhi}) still holds. Therefore, $X^{\vec{r}}=(\widetilde{\varphi
}(s_{\lambda}^{(k)})_{\lambda\in\mathcal{P}_{\mathrm{irr}}})$ should then be
the eigenvector of $\Phi$ corresponding to the Perron Frobenius eigenvalue
$t_{\vec{r}}$ and such that $X^{\vec{r}}(\emptyset)=1$. For $\mu\in
\mathcal{B}_{k}$, set
\begin{equation}
\label{expression_morphism}\widetilde{\varphi}_{\vec{r}}(s_{\mu}^{(k)}%
)=\prod_{a=1}^{k}r_{a}^{p_{a}^{\mu}}X^{\vec{r}}(\widetilde{\mu})\text{ with
}s_{\mu}^{(k)}=\prod_{a=1}^{k}s_{R_{a}}^{p_{a}^{\mu}}s_{\widetilde{\mu}}%
^{(k)}\text{ , }\tilde{\mu}\text{ irreducible.}%
\end{equation}
Then, the map $\widetilde{\varphi}_{\vec{r}}$ is an extension of $\varphi$
which is positive on $\mathcal{I}$ and nonnegative on the $k$-Schur functions
by construction.\ So it just remains to prove that $\widetilde{\varphi}%
_{\vec{r}}$ is a morphism.

Since $t_{\vec{r}}$ and $X^{\vec{r}}$ are continuous functions of $\vec{r}$ on
the set of irreducible matrices, the map $\widetilde{\varphi}_{\vec{r}}$ is a
continuous function of $\vec{r}$. The hypersurface $V(\Delta):=\{\Delta
(\vec{r})=0\}$ is closed in the Zariski topology, thus $V(\Delta)$ has empty
interior in the set $\Theta=\{\vec{r}\in\mathbb{R}_{\geq0}^{k}\mid\Phi
_{(r_{1},\ldots,r_{k})}$ is irreducible$\}$. By the previous arguments, for
all $\vec{r}\in\Theta$ outside $V(\Delta)$, the map $\widetilde{\varphi}%
_{\vec{r}}$ is a morphism and $\vec{r}\mapsto\widetilde{\varphi}_{\vec{r}}$ is
continuous on $\Theta$, thus $\widetilde{\varphi}_{\vec{r}}$ is a morphism for
$\vec{r}\in\overline{\Theta\setminus V(\Delta)}$. By Proposition
\ref{irredPhi}, the set $\Theta$ is open.\ Let $\vec{r}\in\Theta\cap
V(\Delta)$.\ Since the interior of $V(\Delta)$ is empty, one can define a
sequence $\vec{r}^{(n)}\in\Theta\setminus V(\Delta)$ which tends to $\vec
{r}\in\overline{\Theta\setminus V(\Delta)}$ as $n$ goes to infinity. Finally
we get that $\widetilde{\varphi}_{\vec{r}}$ is a morphism, as desired.

We now prove the uniqueness of the extension $\widetilde{\varphi}$. Let
$\widehat{\varphi}$ be another real extension of $\varphi$ positive on
$\mathcal{I}$. By (\ref{PieriPhi}), the vector $\widehat{X}=(\widehat{\varphi
}(s_{\lambda}^{(k)}))_{\lambda\in\mathcal{P}_{\mathrm{irr}}}$ is also a left
eigenvector of $\Phi$ for the positive eigenvalue $\widehat{\varphi}(s_{1})$
normalized so that $\widehat{X}_{\emptyset}$ is equal to one. Since
$\widehat{X}$ has positive entries, it is the Perron Frobenius eigenvector of
$\Phi$ and is thus equal to the vector $X^{\vec{r}}$ defined above. By the
morphism property of $\widehat{\varphi}$, for $\mu\in\mathcal{B}_{k}$ such
that $s_{\mu}^{(k)}=\prod_{a=1}^{k}s_{R_{a}}^{p_{a}^{\mu}}s_{\widetilde{\mu}%
}^{(k)}$ with $\tilde{\mu}$ irreducible, we have
\begin{equation}
\widehat{\varphi}(s_{\mu}^{(k)})=\prod_{a=1}^{k}\varphi(s_{R_{a}}^{p_{a}^{\mu
}})\widehat{X}(\widetilde{\mu})=\prod_{a=1}^{k}r_{a}^{p_{a}^{\mu}}%
X(\widetilde{\mu})=\widetilde{\varphi}(s_{\mu}^{(k)}),
\end{equation}
where we have used the definition of $\widetilde{\varphi}$ given in
(\ref{expression_morphism}). Therefore, $\widetilde{\varphi}=\widehat{\varphi
}$, and we have proven the uniqueness of the extension of $\varphi$ satisfying
the properties of the statement.

(2): Let $\varphi$ be a positive morphism on $\Lambda_{(k)}$. Then, the
associated matrix $\Phi$ is irreducible by Proposition \ref{irredPhi}. Hence,
the first part of the proposition yields that $\varphi$ is uniquely determined
by its values on $\mathcal{R}$.
\end{proof}

An immediate consequence of the latter theorem is the description of positive
extremal harmonic functions. We define an action of $\mathbb{R}_{>0}$ on
$\mathcal{F}(\mathcal{B}_{k},\mathbb{R}_{\geq0})$ by
\[
t\cdot\varphi(s_{\lambda}^{(k)})=t^{\left\vert \lambda\right\vert }%
\varphi(s_{\lambda}^{(k)}),
\]
for $t>0$, $\varphi\in\mathcal{F}(\mathcal{B}_{k},\mathbb{R}_{\geq0})$.

\begin{corollary}
\ 

\begin{enumerate}
\item Let $\varphi\in\partial\mathcal{H}^{+}(\mathcal{B}_{k})$, and suppose
that $\varphi$ is positive on the $k$-Schur functions. Then, $\varphi$ is
uniquely determined by its values on the $s_{R_{a}}$, $1\leq a\leq k$.

\item Assume the matrix $\Phi$ associated to $\varphi$ is irreducible. Then
there exists a unique $t>0$ such that $t^{-1}\cdot\varphi$ can be extended to
an element of $\partial\mathcal{H}^{+}(\mathcal{B}_{k})$ positive on
$\mathcal{I}$.
\end{enumerate}
\end{corollary}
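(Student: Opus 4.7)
The plan is to reduce both assertions to the corresponding parts of Theorem \ref{principalDomain} through the algebraic-probabilistic dictionary of Proposition \ref{multiGraphExtreme}, using a rescaling argument to handle the normalization $\varphi(s_1)=1$ built into the set $\partial\mathcal{H}^{+}(\mathcal{B}_{k})$.

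For assertion (1), I would first invoke Proposition \ref{multiGraphExtreme}: the homeomorphism $i^{\ast}$ identifies any $\varphi\in\partial\mathcal{H}^{+}(\mathcal{B}_{k})$ with an element of $\mathrm{Mult}^{+}(\Lambda_{(k)})$, that is, with an $\mathbb{R}$-algebra morphism $\Lambda_{(k)}\rightarrow\mathbb{R}$ nonnegative on the cone $K$ spanned by the $k$-Schur functions and normalized by $\varphi(s_{1})=1$. Positivity of $\varphi$ on every $s_{\lambda}^{(k)}$ then puts us exactly in the setting of Theorem \ref{principalDomain}(1), which gives the unique determination by $\bigl(\varphi(s_{R_{a}})\bigr)_{1\le a\le k}$.

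For assertion (2), I would apply Theorem \ref{principalDomain}(2) to the given $\varphi\in\mathcal{F}(\mathcal{R}_{k},\mathbb{R}_{\geq 0})$ whose matrix $\Phi$ is irreducible, producing a morphism $\widehat{\varphi}:\Lambda_{(k)}\to\mathbb{R}_{\geq 0}$ that is nonnegative on the $k$-Schur functions and extends $\varphi$. Set $t:=\widehat{\varphi}(s_{1})$; the construction in the proof of Theorem \ref{principalDomain}(2) shows that $t$ is precisely the Perron–Frobenius eigenvalue of the nonnegative irreducible matrix $\Phi$, hence $t>0$. Then I would define $\widetilde{\varphi}:=t^{-1}\cdot\widehat{\varphi}$, i.e. $\widetilde{\varphi}(s_{\lambda}^{(k)})=t^{-|\lambda|}\widehat{\varphi}(s_{\lambda}^{(k)})$. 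Since $\Lambda_{(k)}$ is graded by the partition size and the product $s_{\lambda}^{(k)}s_{\mu}^{(k)}$ expands on $k$-Schur functions of weight exactly $|\lambda|+|\mu|$ (by Lam's positivity together with the Pieri rule (\ref{PieriKSchur}) iterated), the rescaling preserves multiplicativity, so $\widetilde{\varphi}$ is still a morphism; it remains nonnegative on $K$ because $t>0$, and now $\widetilde{\varphi}(s_{1})=1$. Proposition \ref{multiGraphExtreme} therefore places $\widetilde{\varphi}$ in $\partial\mathcal{H}^{+}(\mathcal{B}_{k})$, and a direct check on rectangles gives $\widetilde{\varphi}(s_{R_{a}})=t^{-|R_{a}|}\varphi(s_{R_{a}})=(t^{-1}\cdot\varphi)(s_{R_{a}})$, which is what is meant by saying $\widetilde{\varphi}$ extends $t^{-1}\cdot\varphi$.

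I expect the only subtle point to be the verification that the weight-graded rescaling $f\mapsto t^{-1}\cdot f$ maps algebra morphisms to algebra morphisms; everything else is a direct transport of structure through Proposition \ref{multiGraphExtreme} and Theorem \ref{principalDomain}. The positivity of $t$, which is needed to make the rescaling legitimate, is itself a consequence of irreducibility of $\Phi$ and Perron–Frobenius, already exploited in Theorem \ref{principalDomain}(2).
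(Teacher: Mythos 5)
Your proof is correct and follows essentially the same route as the paper: reduce part (1) to Theorem \ref{principalDomain}(1) via Proposition \ref{multiGraphExtreme}, and for part (2) apply Theorem \ref{principalDomain}(2) to get a nonnegative extension $\widehat{\varphi}$, set $t=\widehat{\varphi}(s_1)$ (positive as the Perron--Frobenius eigenvalue), and rescale by the graded automorphism to normalize $s_1\mapsto 1$. The only cosmetic difference is that you spell out the verification that the weight-graded rescaling preserves multiplicativity, which the paper simply asserts; note this follows from graded homogeneity of $\Lambda_{(k)}$ alone and does not actually require Lam's positivity or iterating the Pieri rule.
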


\begin{proof}
The only non immediate statement is the second one. Suppose that $\Phi$ is
irreducible. Then, by Theorem \ref{principalDomain}, $\varphi$ can be extended
in a unique way to a morphism $\widetilde{\varphi}$ nonnegative on
$\mathcal{B}_{k}$ and positive on $\mathcal{I}$. Let $t=\widetilde{\varphi
}(s_{1})>0$. Then, $t^{-1}\cdot\widetilde{\varphi}$ is a nonnegative morphism
on $\mathcal{B}_{k}$ such that $t^{-1}\cdot\widetilde{\varphi}(s_{1})=1$,
which belongs to $\partial\mathcal{H}^{+}(\mathcal{B}_{k})$. It is clear that
$t^{-1}\cdot\widetilde{\varphi}$ extends $t^{-1}\cdot\varphi$. Also if
$\widehat{\theta}\in\partial\mathcal{H}^{+}(\mathcal{B}_{k})$ extends
$s^{-1}\cdot\varphi$ with $s>0$ and is positive on $\mathcal{I}$, then the
vector with entries $s\cdot\widehat{\theta}(s_{\lambda}^{(k)}),\lambda
\in\mathcal{P}_{\mathrm{irr}}$ is an eigenvector for $\Phi$ associated to the
eigenvalue $s$. Since it has positive entries, we get $t=s$ and $\widehat
{\theta}=t^{-1}\cdot\widetilde{\varphi}$.
\end{proof}

\begin{remark}
\label{Rem_param}It follows also from Proposition \ref{irredPhi} and Assertion
2 of Theorem \ref{principalDomain}, that each morphism $\varphi:\mathbb{A}%
\rightarrow\mathbb{R}$ positive on $\mathcal{R}$ can be extended in a unique
way to a morphism $\widetilde{\varphi}:\Lambda_{(k)}\longrightarrow
\mathbb{R}_{\geq0}$ positive on the $k$-Schur functions. Also clearly, two
distinct such morphisms on $\mathbb{A}$ will yield distinct extensions on
$\Lambda_{(k)}$. Thus the morphisms $\widetilde{\varphi}:\Lambda
_{(k)}\longrightarrow\mathbb{R}_{\geq0}$ positive on the $k$-Schur functions
are parametrized by $\mathbb{R}_{>0}^{k}$ via the map which associates to each
such morphism its values on $\mathcal{R}$.
\end{remark}

\subsection{Two parametrizations of the positive morphisms}

\label{SubSec_Twoparam}The more immediate parametrization of the positive
morphisms $\varphi:\Lambda_{(k)}\rightarrow\mathbb{R}$ such that
$\varphi(s_{\lambda}^{(k)})>0$ for any $k$-bounded partition is obtained from
the factorization property (Corollary \ref{reducedCase}) of the $k$-Schur
functions. Consider
\[
V=\left\{  (h_{1},\ldots,h_{k})\in\mathbb{R}^{k}\mid\left\{
\begin{array}
[c]{c}%
JT_{R_{i}}(h_{1},\ldots,h_{k})>0,i=1,\ldots,k\\
JT_{\kappa}(h_{1},\ldots,h_{k})>0\ \forall\kappa\in\mathcal{P}_{\mathrm{irr}}%
\end{array}
\right.  \right\}  \subset\mathbb{R}_{>0}^{k}%
\]
where we have set $s_{Ri}=JT_{R_{i}}(h_{1},\ldots,h_{k})$ and $s_{\kappa
}=JT_{\kappa}(h_{1},\ldots,h_{k})$ where $JT_{R_{1}},\ldots,JT_{R_{k}}$ and
$JT_{\kappa},\kappa\in\mathcal{P}_{\mathrm{irr}}$ are polynomials in
$\mathbb{R}[X_{1},\ldots,X_{k}]$. To each point in $V$ corresponds a unique
positive morphism $\varphi$ defined on $\Lambda_{(k)}$. Now define%
\[
U=\{\vec{r}=(r_{1},\ldots,r_{k})\in\mathbb{R}_{>0}^{k}\}.
\]
As explained in Remark \ref{Rem_param}, the positive morphisms $\varphi
:\Lambda_{(k)}\rightarrow\mathbb{R}$ such that $\varphi(s_{\lambda}^{(k)})>0$
for any $\lambda\in\mathcal{B}_{k}$ are parametrized by $U$ and we can define
a map $f:U\rightarrow V$ such that
\[
f(r_{1},\ldots,r_{k})=(\varphi(h_{1}),\ldots,\varphi(h_{k})).
\]
The map $f$ is then continuous on $U$ since the entries of the matrix $\Phi$
are and so is its Perron Frobenius vector normalized at $1$ on $s_{\emptyset}%
$. Moreover, the map $f$ is bijective by Theorem \ref{principalDomain} and we
have
\[
f^{-1}:\left\{
\begin{array}
[c]{c}%
V\rightarrow U\\
(h_{1},\ldots,h_{k})\mapsto(JT_{R_{1}}(h_{1},\ldots,h_{k}),\ldots,JT_{R_{k}%
}(h_{1},\ldots,h_{k}))
\end{array}
\right.
\]
where the polynomials $JT_{R_{1}},\ldots,JT_{R_{k}}$ are given by the
Jacobi-Trudi determinantal formulas. In particular $f^{-1}$ is continuous on
$V$.

\begin{lemma}
\label{Lemma_Bounded}The map $f$ is bounded on any bounded subset of $U$.
\end{lemma}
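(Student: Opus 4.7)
The plan is to bound each coordinate $\varphi(h_r)$, $1 \leq r \leq k$, of $f(\vec{r})$ in terms of the Perron--Frobenius eigenvalue $t := \varphi(s_{(1)})$ of the matrix $\Phi_{\vec{r}}$, and then to show that $t$ is itself bounded on any bounded subset of $U$.

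First I would reduce the problem to estimating $k$-Schur functions indexed by one-row partitions. Since $1 \leq r \leq k$, the partition $(r)$ has rank $r \leq k$, so by the properties of $k$-Schur functions recalled just before Lemma~\ref{conditionArrow} we have $s_{(r)}^{(k)} = s_{(r)} = h_r$, and therefore $\varphi(h_r) = \varphi(s_{(r)}^{(k)})$.

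Next I obtain an upper bound for $\varphi(s_{(r)}^{(k)})$ by iterating the Pieri rule~(\ref{PieriKSchur}) for the multiplication by $s_{(1)} = h_1$. Starting from $s_\emptyset^{(k)} = 1$ and applying this $r$ times yields a decomposition
$$s_{(1)}^r = \sum_{\nu \in \mathcal{B}_k} d_\nu\, s_\nu^{(k)},$$
where $d_\nu \in \mathbb{Z}_{\geq 0}$ counts the paths of length $r$ from $\emptyset$ to $\nu$ in $\mathcal{B}_k$. The chain $\emptyset \to (1) \to (2) \to \cdots \to (r)$ is a legal path since each intermediate partition has rank at most $k$, so $d_{(r)} \geq 1$. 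Since $\vec{r} \in U \subset \mathbb{R}_{>0}^k$ forces $\Phi_{\vec{r}}$ to be irreducible by Proposition~\ref{irredPhi}, Theorem~\ref{principalDomain} gives that the associated morphism $\varphi$ is nonnegative on all $k$-Schur functions; applying $\varphi$ to the above identity and dropping all terms but $s_{(r)}^{(k)}$ yields
$$\varphi(h_r) = \varphi(s_{(r)}^{(k)}) \leq \varphi(s_{(1)})^r = t^r.$$

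Finally I bound $t$ itself. By Perron--Frobenius, $t$ does not exceed the maximum row sum of $\Phi_{\vec{r}}$. From the definition of $\varphi_{\lambda\nu}$ in \S~\ref{subsec_MatrixFI}, each entry of $\Phi_{\vec{r}}$ is a finite sum of monomials in $r_1,\ldots,r_k$ with nonnegative coefficients; hence every entry, and in particular every row sum, is bounded on any bounded subset $B$ of $U$. Combining with the previous step, $\varphi(h_r) \leq t^r$ is bounded on $B$ for every $r = 1,\ldots,k$, which gives the claim. The argument presents no serious obstacle: the key observation is the Pieri inequality $\varphi(h_r) \leq t^r$, which reduces boundedness of $f$ to boundedness of the Perron--Frobenius eigenvalue of a matrix with polynomial entries in $\vec{r}$.
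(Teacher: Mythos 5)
Your argument is correct and is essentially the paper's own proof: one bounds $\varphi(h_r)$ by $\varphi(h_1)^r$ using the Pieri rule together with the nonnegativity of $\varphi$ on the $k$-Schur basis, and then observes that $\varphi(h_1)$ is the spectral radius of $\Phi_{\vec r}$, whose entries depend polynomially on $\vec r$ and are therefore bounded on bounded subsets of $U$. The only cosmetic difference is that you invoke the max-row-sum bound for the Perron--Frobenius eigenvalue where the paper appeals directly to boundedness of the spectral radius as a function of the matrix entries.
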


\begin{proof}
Let $B\subset U$ be a bounded subset of $U$. By definition of $f$, for any
$\vec{r}=(r_{1},\ldots,r_{k})$ in $B$, $\varphi(h_{1})$ is the first
coordinate of $f(r_{1},\ldots,r_{k})$ and coincides with the Perron Frobenius
eigenvalue of the matrix $\Phi$, that is with its spectral radius.\ Since the
spectral radius of a real matrix is a bounded function of its entries, we get
that $\varphi(h_{1})$ is bounded when $\vec{r}$ runs over $B$.\ To conclude,
observe that for any $a=2,\ldots,k$ we have $\varphi(h_{a})\leq\varphi
(h_{1})^{a}$ because $h_{1}=s_{1},$ the map $\varphi$ is multiplicative and
$h_{a}$ appears in the decomposition of $s_{1}^{a}$ on the basis of $k$-Schur
functions (which only makes appear nonnegative real coefficients).
\end{proof}

Now set
\begin{align*}
\overline{U}  &  =\{\vec{r}=(r_{1},\ldots,r_{k})\in\mathbb{R}_{\geq0}%
^{k}\}\text{ and }\\
\overline{V}  &  =\left\{  (h_{1},\ldots,h_{k})\in\mathbb{R}^{k}\mid\left\{
\begin{array}
[c]{c}%
JT_{R_{i}}(h_{1},\ldots,h_{k})\geq0,i=1,\ldots,k\\
JT_{\kappa}(h_{1},\ldots,h_{k})\geq0\ \forall\kappa\text{ }%
k\text{-irreducible}%
\end{array}
\right.  \right\}  \subset\mathbb{R}_{\geq0}^{k}%
\end{align*}
Since $JT_{R_{1}},\ldots,JT_{R_{k}}$ are polynomials, we can extend $f^{-1}$
by continuity on $\overline{V}$ and get a continuous map $g:\overline
{V}\rightarrow\overline{U}$. But this is not immediate right now that $g$ is
bijective and $f$ can also be extended to a bijective map from $\overline{U}$
to $\overline{V}$. Observe nevertheless that if we can extend $f$ by
continuity on $\overline{U}$, the continuity of $g$ and $f$ will imply that
$f\circ g=id_{\overline{V}}$ and $g\circ f=id_{\overline{U}}.$ Therefore, to
extend $f$ by continuity will suffice to prove that $\overline{U}\ $and
$\overline{V}$ are homeomorphic by $f$.

\subsection{Extension of the map $f$ on $\overline{U}$}

Let $\vec{r}_{0}\in\mathbb{R}_{\geq0}^{k}$, and denote by $A(\vec{r}_{0})$ the
set of limiting values of $f(\vec{r})$ as $\vec{r}$ goes to $\vec{r}_{0}$ in $\overline{U}=\mathbb{R}^{k}$.
Recall the notation of the previous paragraph, in particular the function $g$
is defined and continuous on $\overline{V}$ and $g=f^{-1}$ on $f(U)$.

\begin{lemma}
The set $A(\vec{r}_{0})$ is a connected subset of $\mathcal{R}_{\vec{r}_{0}}$
(see Definition \ref{Def_Variety}).
\end{lemma}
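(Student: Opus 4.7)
The plan is to realize $A(\vec{r}_0)$ as a decreasing intersection of nonempty compact connected sets and then invoke the standard topological fact that such an intersection is connected in a Hausdorff space. Along the way I also need to check that every limit value automatically lies on the variety $\mathcal{R}_{\vec{r}_0}$.

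First, I would verify that $A(\vec{r}_0)\subset\mathcal{R}_{\vec{r}_0}$. By construction of $f$, for every $\vec{r}\in U$ one has $J_{R_a}(f(\vec{r}))=r_a$, where $J_{R_a}$ is the Jacobi-Trudi polynomial expressing $s_{R_a}$ in the $h_i$'s. If $\vec{h}_0=\lim_n f(\vec{r}^{(n)})$ for some $\vec{r}^{(n)}\to\vec{r}_0$ in $U$, continuity of $J_{R_a}$ gives $J_{R_a}(\vec{h}_0)=r_{0,a}$ for every $a$, so $\vec{h}_0\in\mathcal{R}_{\vec{r}_0}$ as defined in Definition \ref{Def_Variety}.

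For the connectedness, given $\epsilon>0$ I set $B_\epsilon=B(\vec{r}_0,\epsilon)\cap U$ and $K_\epsilon=\overline{f(B_\epsilon)}$. Since $B(\vec{r}_0,\epsilon)$ and $U=\mathbb{R}_{>0}^{k}$ are both convex, $B_\epsilon$ is convex, in particular connected. Then $f(B_\epsilon)$ is connected as the continuous image of a connected set, and so is its closure $K_\epsilon$. Lemma \ref{Lemma_Bounded} guarantees that $f(B_\epsilon)$ is bounded (because $B_\epsilon$ is), so $K_\epsilon$ is a compact connected subset of $\mathbb{R}^k$. A direct diagonal extraction yields the identification $A(\vec{r}_0)=\bigcap_{\epsilon>0}K_\epsilon$: any $\vec{h}_0$ lying in every $K_\epsilon$ can be approached by picking $\vec{r}^{(n)}\in B_{1/n}$ with $|f(\vec{r}^{(n)})-\vec{h}_0|<1/n$, and the converse inclusion is immediate. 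Moreover $A(\vec{r}_0)$ is nonempty, because any sequence in $U$ tending to $\vec{r}_0$ has bounded image under $f$ (Lemma \ref{Lemma_Bounded}) and therefore a convergent subsequence by Bolzano-Weierstrass.

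To finish I would invoke the classical fact that a decreasing intersection of nonempty compact connected subsets of a Hausdorff space is connected. The argument is essentially topological once the identification $A(\vec{r}_0)=\bigcap_{\epsilon}K_\epsilon$ is in hand, so the only genuine input specific to this setting is the boundedness statement of Lemma \ref{Lemma_Bounded}, which supplies compactness. There is no serious obstacle here; the lemma is a soft preparatory step for the subsequent analysis of the extension of $f$ to $\overline{U}$, where the real work (showing that $A(\vec{r}_0)$ is a singleton) will lie.
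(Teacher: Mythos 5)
Your proof is correct and follows essentially the same route as the paper: writing $A(\vec{r}_0)$ as a decreasing intersection of the compact connected sets $\overline{f(B(\vec{r}_0,\epsilon)\cap U)}$, using Lemma \ref{Lemma_Bounded} for compactness and the standard fact about nested compact connected sets. Your direct verification that $A(\vec{r}_0)\subset\mathcal{R}_{\vec{r}_0}$ via continuity of the $J_{R_a}$ is equivalent to the paper's appeal to continuity of $g$ (which is precisely $(J_{R_1},\ldots,J_{R_k})$), and the added nonemptiness remark is a harmless clarification.
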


\begin{proof}
Consider $K_{n}=B\left(  \vec{r}_{0},\frac{1}{n}\right)  \cap\mathbb{R}%
_{>0}^{k}$.\ This is a system of decreasing bounded connected neighborhoods of
$\vec{r}_{0}$ in $\mathbb{R}_{>0}^{k}$. By definition, $A(\vec{r}_{0}%
)=\bigcap_{n\geq1}\overline{f(K_{n})}$. By Lemma \ref{Lemma_Bounded}, we know
that $f$ is bounded on bounded subsets of $U=\mathbb{R}_{>0}^{k}$, therefore
we get that $f(K_{n})$ is bounded and thus $\overline{f(K_{n})}$ is compact.
Since $f$ is continuous on $U$ and $K_{n}$ is connected, $f(K_{n})$ is also
connected, which implies that $\overline{f(K_{n})}$ is connected. Hence,
$A(\vec{r}_{0})$ is a decreasing intersection of connected compact sets, and
thus $A(\vec{r}_{0})$ is connected.

Let $\vec{h}\in A(\vec{r}_{0})$. Then, there exists a sequence $(\vec
{r_{n}})_{n\geq1}$ in $U$ converging to $\vec{r}_{0}$ such that $\vec{h}%
_{n}:=f(\vec{r_{n}})$ converges to $\vec{h}$ as $n$ goes to infinity. Since $g=f^{-1}$ on $\mathbb{R}_{>0}^{k}$, $g(\vec{h}_{n})=g\circ f(\vec
{r_{n}})=\vec{r_{n}}$ for $n\geq1$. Moreover, since $g$ is continuous and
$(\vec{h}_{n})_{n\geq1}$ converges to $\vec{h}$ as $n$ goes to infinity,
\[
g(\vec{h})=\lim_{n\rightarrow\infty}g(\vec{h}_{n})=\lim_{n\rightarrow\infty
}\vec{r_{n}}=\vec{r}_{0}%
\]
which implies that $\vec{h}\in\mathcal{R}_{\vec{r}_{0}}$.
\end{proof}

\begin{theorem}
\ \label{Th_UVhomeo}

\begin{enumerate}
\item The map $f$ is an homeomorphism from $\overline{U}$ to $\overline{V}$.

\item The morphisms $\varphi:\Lambda_{(k)}\rightarrow\mathbb{R}$ nonnegative
on the $k$-Schur functions are parametrized by $\mathbb{R}_{\geq0}^{k}$.
\end{enumerate}
\end{theorem}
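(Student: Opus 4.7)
The plan is to combine the preceding lemma with Proposition \ref{Prop_Variety} as the main leverage. The lemma shows $A(\vec{r}_0)$ is connected and contained in $\mathcal{R}_{\vec{r}_0}$, while Proposition \ref{Prop_Variety} asserts $\mathcal{R}_{\vec{r}_0}$ is finite; any connected subset of a finite subset of $\mathbb{R}^k$ is either empty or a singleton, and non-emptiness is guaranteed by Lemma \ref{Lemma_Bounded} (for any sequence $\vec{r}_n \to \vec{r}_0$ in $U$, the bounded sequence $f(\vec{r}_n)$ admits a convergent subsequence whose limit lies in $A(\vec{r}_0)$). Hence $A(\vec{r}_0)$ consists of a single point, which we denote $\tilde{f}(\vec{r}_0)$. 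The resulting map $\tilde{f}: \overline{U} \to \overline{V}$ is a continuous extension of $f$, whose image lies in $\overline{V}$ since the strict defining inequalities of $V$ become non-strict in the limit.

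Next I would identify $g: \overline{V} \to \overline{U}$ (already continuous) as the two-sided inverse of $\tilde{f}$. The equality $g \circ f = \mathrm{id}_U$ on the dense subset $U \subset \overline{U}$ upgrades by continuity to $g \circ \tilde{f} = \mathrm{id}_{\overline{U}}$, yielding injectivity of $\tilde{f}$ and surjectivity of $g$. For surjectivity of $\tilde{f}$, take $\vec{h}_0 \in \overline{V}$ and set $\vec{r}_0 = g(\vec{h}_0) \in \overline{U}$; both $\vec{h}_0$ and $\tilde{f}(\vec{r}_0)$ lie in the finite set $\mathcal{R}_{\vec{r}_0}$, and to identify them it suffices to exhibit a sequence $\vec{h}_n \in V$ with $\vec{h}_n \to \vec{h}_0$, since then $\vec{r}_n := g(\vec{h}_n) \to \vec{r}_0$ and $\tilde{f}(\vec{r}_n) = \vec{h}_n \to \tilde{f}(\vec{r}_0)$, forcing $\vec{h}_0 = \tilde{f}(\vec{r}_0)$.

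The main obstacle is therefore establishing the density of $V$ in $\overline{V}$. The approach is to exploit the semi-algebraic structure: $V$ is exactly the locus of strict polynomial inequalities among those defining $\overline{V}$, and since $V = f(U)$ is non-empty and open of full dimension $k$ in $\mathbb{R}^k$, the closed semi-algebraic set $\overline{V}$ coincides with the closure of its interior, which equals $V$. Once bijectivity is secured, continuity of $\tilde{f}$ and of $g = \tilde{f}^{-1}$ make it a homeomorphism, proving (1). Part (2) is then immediate: a morphism $\varphi: \Lambda_{(k)} \to \mathbb{R}$ is encoded by $\vec{h} = (\varphi(h_1), \ldots, \varphi(h_k))$, and by Corollary \ref{reducedCase} nonnegativity on all $k$-Schur functions is precisely the condition $\vec{h} \in \overline{V}$, so the homeomorphism of (1) gives the parametrization of such morphisms by $\mathbb{R}_{\geq 0}^k$.
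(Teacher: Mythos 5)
Your proposal follows essentially the same route as the paper: show $A(\vec{r}_0)$ is a singleton (connected and finite), extend $f$ continuously, and conclude via the two-sided inverse argument from the end of \S\ \ref{SubSec_Twoparam}. You correctly supply details the paper leaves implicit — nonemptiness of $A(\vec{r}_0)$ (already implicit in the preceding lemma via Cantor's intersection theorem, since each $\overline{f(K_n)}$ is a nonempty compact) and the inclusion $\tilde{f}(\overline{U}) \subseteq \overline{V}$ — and part~(2) is handled correctly.

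The substantive divergence is your explicit flagging that $\tilde{f} \circ g = \mathrm{id}_{\overline{V}}$ requires the density of $V$ in $\overline{V}$. You are right that this point deserves attention: the paper's remark at the end of \S\ \ref{SubSec_Twoparam} asserts both identities follow ``by continuity,'' but continuity plus density of $U$ in $\overline{U}$ only yields $g \circ \tilde{f} = \mathrm{id}_{\overline{U}}$; the other identity requires knowing $V$ is dense in $\overline{V}$, and the paper does not justify this (the notation $\overline{V}$ notwithstanding, it is \emph{defined} by non-strict inequalities, not as a topological closure). However, the argument you offer to fill this gap is not valid as stated. It is \emph{not} true in general that a closed set defined by polynomial non-strict inequalities with nonempty, full-dimensional strict locus coincides with the closure of that strict locus: take $S = \{(x,y) : y \geq 0,\ y(x^2+y^2-1)\geq 0\}$, which contains the whole $x$-axis, while the strict locus $\{y>0,\ x^2+y^2>1\}$ is nonempty, open and full-dimensional but not dense in $S$. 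So the semi-algebraic appeal does not close the gap. The density of $V$ in $\overline{V}$ is in fact true for this particular $\overline{V}$, since later the paper identifies $\overline{V} = T_{\geq 0}$ (Theorem~\ref{Th_RiRef}) and proves $T_{\geq 0} = \overline{T_{>0}}$ (Lemma~\ref{Lemma_closed}), but Theorem~\ref{Th_RiRef} uses Theorem~\ref{Th_UVhomeo}, so invoking it here is circular. A non-circular route would be to prove injectivity of $g$ on $\overline{V}$ directly (i.e., a morphism nonnegative on the $k$-Schur functions is determined by its rectangle values even when $\Phi$ fails to be irreducible, which requires handling the non-Perron-Frobenius case), or to adapt the totally-positive matrix multiplication trick of Lemma~\ref{Lemma_closed} to show $V$ is dense in $\overline{V}$ without first knowing $\overline{V} = T_{\geq 0}$.
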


\begin{proof}
(1): The set $A(\vec{r}_{0})$ is a connected subset by the previous lemma and
it is also finite by Proposition \ref{Prop_Variety}. Therefore, the set
$A(\vec{r}_{0})$ is a singleton. In particular, $f(\vec{r})$ converges to some
$f(\vec{r}_{0})$ as $\vec{r}$ tends to $\vec{r}_{0}$, and $f$ can be extended
by continuity on $\mathbb{R}_{\geq0}^{k}$. As explained at the end of
\S \ \ref{SubSec_Twoparam}, this suffices to conclude that $f$ is an
homeomorphism from $\overline{U}$ to $\overline{V}$.

(2): By the first part of the theorem, it suffices to prove that any morphism
$\varphi:\Lambda_{(k)}\rightarrow\mathbb{R}$ nonnegative on the $k$-Schur
functions is in the closure of the set of positive morphisms. Let $\varphi
_{0}:\Lambda_{(k)}\rightarrow\mathbb{R}$ be a positive morphism (for example,
one can take the image of any element of $U$ by $f$). Then, for any
nonnegative morphism $\varphi:\Lambda_{(k)}\rightarrow\mathbb{R}$ and $0\leq
t\leq1$, one can define the convolution morphism $\varphi*_{t}\varphi_{0}$ by
the formula
\[
\varphi*_{t}\varphi_{0}(f)=((1-t)\cdot\varphi\otimes t\cdot\varphi_{0}%
)\Delta(f)
\]
for $f\in\Lambda_{(k)}$, where $\Delta$ is the coproduct on $\Lambda_{(k)}$.
Since $\Delta$ is an algebra morphism from $\Lambda_{(k)}$ to $\Lambda
_{(k)}\otimes\Lambda_{(k)}$, $\varphi*_{t}\varphi_{0}$ is indeed a morphism.
Let $\lambda\in\mathcal{B}_{k}$ and $s_{\lambda}^{(k)}$ the corresponding
$k$-Schur function. Then, by \cite[Corollary 8.1]{LamLR},
\[
\Delta(s_{\lambda}^{(k)})=\sum_{\substack{\mu,\nu\in\mathcal{B}_{k}\\\vert
\mu\vert+\vert\nu\vert=\vert\lambda\vert}}C_{\mu,\nu}^{\lambda,(k)}s_{\mu
}^{(k)}\otimes s_{\nu}^{(k)}%
\]
with nonnegative coefficients $C_{\mu,\nu}^{\lambda,(k)}$. Since $\Delta$ is
the usual coproduct from the ring of symmetric functions, we moreover have
$C_{\lambda,\emptyset}^{\lambda,(k)}=C_{\emptyset,\lambda}^{\lambda,(k)}=1$,
so that
\[
\varphi*_{t}\varphi_{0}(s_{\lambda}^{(k)})=(1-t)\cdot\varphi(s_{\lambda}%
^{(k)})+t\cdot\varphi_{0}(s_{\lambda}^{(k)})+\sum_{\substack{\vert\mu
\vert+\vert\nu\vert=\vert\lambda\vert\\\mu,\nu\not =\emptyset}}C_{\mu,\nu
}^{\lambda,(k)}(1-t)^{\vert\mu\vert}\varphi(s_{\mu}^{(k)})t^{\vert\nu\vert}
\varphi_{0}(s_{\nu}^{(k)}).
\]
Since $\varphi_{0}(s_{\lambda}^{(k)})$ is positive and all terms in the above
sums are nonnegative, $\varphi*_{t}\varphi_{0}(s_{\lambda}^{(k)})$ is positive
for all $t>0$. Hence, $\varphi*_{t}\varphi_{0}$ is a positive morphism and
$\varphi*_{t}\varphi_{0}$ converges to $\varphi$ as $t$ goes to zero. Thus,
$\varphi$ is in the closure of the set of positive morphisms.
\end{proof}

\begin{example}
\label{Ex_k=2}For $k=2$, we get for the matrix associated to $\vec{r}%
=(r_{1},r_{2})\in\mathbb{R}_{\geq0}^{2}$%
\[
\Phi=\left(
\begin{array}
[c]{cc}%
0 & r_{1}+r_{2}\\
1 & 0
\end{array}
\right)
\]
whose greatest eigenvalue is $\sqrt{r_{1}+r_{2}}$ with associated normalized
left eigenvector $(1,\sqrt{r_{1}+r_{2}})$. We thus get $\vec{h}=f(\vec
{r})=(\sqrt{r_{1}+r_{2}},r_{1})$ since $h_{1}=\sqrt{r_{1}+r_{2}}$ and
$h_{2}=r_{1}$. Conversely, we have $g(\vec{h})=(h_{2},h_{1}^{2}-h_{2})$. If we
assume $h_{1}=1$, we get $\partial\mathcal{H}^{+}(\mathcal{B}_{2}%
)=\{(1,h_{2})\mid h_{2}\in\lbrack0,1]\}.$
\end{example}

\section{Markov chains on alcoves}

\subsection{Central Markov chains on alcoves from harmonic functions}

\label{definitionCentral_Markov}

Recall the notation of \S \ \ref{subset_Lattices} for the notion of reduced
alcove paths.\ A probability distribution on reduced alcove paths is said
central when the probability $p_{\pi}$ of the path $\pi=(A_{1}=A^{(0)}%
,A_{2},\ldots,A_{m})$ only depends on $m,A_{1}$ and $A_{m}$, that is only on
its length and its alcoves ends. In the situation we consider, affine
Grassmannian central random paths correspond to central random paths on
$\mathcal{B}_{k}$. Similarly, affine (non Grassmannian) central random alcove
paths correspond to central random paths on the Hasse diagram $\mathcal{G}%
_{k}$ of the weak Bruhat order. They are determined by the positive harmonic
functions on $\mathcal{B}_{k}$ and $\mathcal{G}_{k}$,
respectively$\mathcal{\ }$(see \cite{Ker}).

More precisely any central probability distribution on the affine Grassmannian
alcove paths can be written
\[
p_{\pi}=\frac{h(\mu)}{h(\lambda)}%
\]
where $h\in\mathcal{H}^{+}(\mathcal{B}_{k})$ is positive and for any path
$\pi=(A_{1},\ldots,A_{m}),$ $\mu$ and $\lambda$ are the $k$-bounded partitions
associated to $A_{1}$ and $A_{m}$. Also we then get a Markov chain on
$\mathcal{B}_{k}$ (or equivalently on the affine Grassmannian elements) with
transition matrix
\[
\Pi(\lambda,\mu)=\frac{h(\mu)}{h(\lambda)}.
\]
When $h$ is extremal, it corresponds to a morphism $\varphi$ on $\Lambda
_{(k)}$ with $\varphi(s_{(1)})=1$, nonnegative on the $k$-Schur functions. We
get an extremal central distribution on the trajectories starting at $A^{(0)}$
verifying $p_{\pi}=\frac{\varphi(s_{\mu}^{(k)})}{\varphi(s_{\lambda}^{(k)})}%
$.\ The associated Markov chain has then the transition matrix $\Pi
(\lambda,\mu)=\frac{\varphi(s_{\mu}^{(k)})}{\varphi(s_{\lambda}^{(k)})}$.

\subsection{Comparison with Lam's uniform distribution}
\label{SubSec_Compar}

The probability distribution on reduced alcoves paths used by Lam in
\cite{Lam2} does not coincide with ours in general: at each step of such a
path, an alcove is chosen uniformly with the condition that each hyperplane
can be crossed only once. It is not difficult to check that such a
distribution is not central when $k\geq3$. One can for example compare the
probability of two paths from the fundamental alcove to a suitable alcove on the
border of the dominant Weyl chamber with one path remaining always on the
border and not the other (or equivalently by comparing the probabilities of
two paths in the graph $\mathcal{B}_{k}$ from $\emptyset$ to a suitable column
partition with one path containing only column partitions and not the other).

The case $k=2$ is very particular because $\mathcal{B}_{2}$ has then a very
simple regular structure which imposes that the probability of any path from
$\emptyset$ to the $2$-restricted partition $\lambda$ is equal to $\left(
\frac{1}{2}\right)  ^{\left\lfloor \frac{\left\vert \lambda\right\vert }%
	{2}\right\rfloor }$. In particular, Lam's uniform distribution is then
central.\ One can check that in our setting, this correspond to the case where
$s_{R_{1}}=s_{(2)}$ and $s_{R_{2}}=s_{(1,1)}$ are both specialized to
$\frac{1}{2}$.

\subsection{Involutions on the reduced walk}

\label{SubSec_Invol}

By Corollary \ref{reducedCase}, the structure of the graph $\mathcal{B}_{k}$
is completely determined by the matrix $\mathbf{\Phi}$ depicted in Section
\ref{subsec_MatrixFI} with entries in $\mathbb{R}[s_{R_{1}},\ldots,s_{R_{k}}%
]$. Then $\Phi_{(r_{1},\ldots,r_{k})}$ is the matrix $\boldsymbol{\Phi}$ after
the specialization $s_{R_{1}}=r_{1},\ldots,s_{R_{k}}=r_{k}$. We are going to
see that this matrix exhibits particular symmetries coming from the underlying
alcove structure.

The first symmetry is due to the action of $\omega_{k}$ on $\Lambda_{(k)}$
which sends $s_{R_{a}}$ to $s_{R_{k-a}}$ for any $a=1,\ldots,k$. Since
$\omega_{k}$ is an algebra morphism, we get for $1\leq a_{i}\leq k$ and
$s\geq1$%

\[
\left\{
\begin{matrix}
\Phi_{(r_{1},\ldots,r_{k})}(\lambda,\mu)=1\Leftrightarrow\Phi_{(r_{1}%
,\ldots,r_{k})}(\lambda^{\omega_{k}},\mu^{\omega_{k}})=1,\\
\Phi_{(r_{1},\ldots,r_{k})}(\lambda,\mu)=r_{a_{1}}+\cdots+r_{a_{s}%
}\Leftrightarrow\Phi_{(r_{1},\ldots,r_{k})}(\lambda^{\omega_{k}},\mu
^{\omega_{k}})=r_{k+1-a_{1}}+\cdots+r_{k+1-a_{s}}.
\end{matrix}
\right.
\]
Hence, if we denote by $\Omega$ the matrix of the conjugation $\omega_{k}$ on
the basis of irreducible partitions, we get
\begin{equation}
\Omega\Phi_{(r_{1},\ldots,r_{k})}\Omega^{-1}=\Omega\Phi_{(r_{1},\ldots,r_{k}%
)}\Omega=\Phi_{(r_{k},\ldots,r_{1})}. \label{PhiConjugation}%
\end{equation}
For the second symmetry, we need some basic facts about the affine Coxeter
arrangement of type $A_{k}^{(1)}$. For any root $\alpha$ and any integer, let
$H_{\alpha,r}$ be the affine hyperplane
\[
H_{\alpha,r}=\{v\in\mathbb{R}^{k},\langle v,\alpha\rangle=r\}.
\]
We denote by $s_{\alpha,r}$ the reflection with respect to this hyperplane and
for $\beta$ in the weight lattice $P$, we write $t_{\beta}$ for the
translation by $\beta$. We have then $s_{\alpha,r}=t_{r\alpha}s_{\alpha,0}$.
For $w\in W$, we have the commutation relations
\begin{equation}
wt_{\beta}=t_{w(\beta)}w\,,\quad ws_{\alpha,r}=s_{w(\alpha),r}%
w\quad\text{ and }\quad t_{\beta}s_{\alpha,r}=s_{\alpha,r+\langle\beta
,\alpha\rangle}t_{\beta}. \label{commutRelation}%
\end{equation}
Affine Grassmannian elements are in bijection with alcoves in the dominant
Weyl chamber through a map $w\mapsto A_{w}$ such that $w\rightarrow w^{\prime
}$ (that is we have a covering relation for the weak order from $w$ to
$w^{\prime}$) if and only if there is a hyperplane $H_{\alpha,r}$ such that
$A_{w^{\prime}}=s_{\alpha,r}(A_{w})$. In this case, we write
$w\xrightarrow{\alpha,r}w^{\prime}$.

Write $v_{w}$ for the center of the alcove $A_{w}$ (defined as the mean of the
its extreme weights). With these notations, $w\xrightarrow{\alpha,r}w^{\prime
}$ if and only if $v_{w^{\prime}}=s_{\alpha,r}(v_{w})$ and $r<\langle
\alpha,v_{w^{\prime}}\rangle<r+1$.

Any alcove $A_{w}$ is completely determined by its center $v_{w}$.\ Let
$\mathsf{B}$ be the set of alcoves corresponding to affine Grassmannian
elements $w$ such that $\langle v_{w},\alpha_{i}\rangle\in]0,1[$ for any
$i=1,\ldots,k$ (i.e. such that the coordinates of $v_{w}$ on the basis of
fundamental weights belong to $]0,1[$). Recall also there is an involution on
the Dynkin diagram of affine type $A_{k}^{(1)}$ fixing the node $0$ and
sending each node $i\in\{1,\ldots,k\}$ to $i^{\ast}=k+1-i$. Consider now the
involution $I:\mathbb{R}^{k}\rightarrow\mathbb{R}^{k}$ defined by
\[
I=t_{\rho}\circ w_{0},
\]
where $\rho=\sum_{i=1}^{k}\Lambda_{i}$ and $w_{0}$ is the longest element of
$W.$ Observe we indeed get an involution because $w_{0}\circ t_{\rho}\circ
w_{0}=t_{-\rho}$.

\begin{lemma}
\label{Lem_IonB}The involution $I$ restricts to an involution on the set
$\mathsf{B}$ .
\end{lemma}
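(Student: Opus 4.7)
The plan is to reduce everything to a single explicit computation of the quantity $\langle I(v),\alpha_i\rangle$ for classical simple roots $\alpha_i$, then use the affinity of $I$ to transfer the information from centers of alcoves back to alcoves themselves. The three things I have to check are: (i) $I$ permutes alcoves, (ii) $I$ preserves the affine Grassmannian condition (being in the dominant chamber), and (iii) $I$ preserves the box condition $\langle v,\alpha_i\rangle\in\,]0,1[$ defining $\mathsf{B}$. The involutivity is already noted in the excerpt via $w_0\circ t_\rho\circ w_0=t_{-\rho}$, so only (i)-(iii) need arguments.

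For (i), I would observe that $I$ lies in the extended affine Weyl group $W\ltimes P^\vee$ because $w_0\in W$ and, in type $A_k$, $\rho$ belongs to the coweight lattice (equivalently, $\langle\rho,\alpha\rangle\in\mathbb Z$ for every root $\alpha$, since $\langle\rho,\alpha_i\rangle=1$ and the highest root $\theta$ has $\langle\rho,\theta\rangle=k$). The extended affine Weyl group preserves the affine hyperplane arrangement, so it permutes alcoves. Since $I$ is affine, it sends the barycenter $v_w$ of $A_w$ to the barycenter of $I(A_w)$; hence writing $I(A_w)=A_{w'}$ gives $I(v_w)=v_{w'}$.

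The core computation for (ii) and (iii) is the following. For any $v\in\mathbb R^k$ and any $i\in\{1,\ldots,k\}$,
\[
\langle I(v),\alpha_i\rangle=\langle w_0(v)+\rho,\alpha_i\rangle=\langle v,w_0(\alpha_i)\rangle+\langle\rho,\alpha_i\rangle=1-\langle v,\alpha_{i^*}\rangle,
\]
using the standard type $A$ identity $w_0(\alpha_i)=-\alpha_{i^*}$ (with $i^*=k+1-i$) and $\langle\rho,\alpha_i\rangle=1$. If $A_w\in\mathsf{B}$, meaning $\langle v_w,\alpha_j\rangle\in\,]0,1[$ for every $j$, applying this with $j=i^*$ yields $\langle I(v_w),\alpha_i\rangle\in\,]0,1[$ for every $i$. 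This simultaneously gives (iii) and, since $\langle I(v_w),\alpha_i\rangle>0$ for all classical simple roots, shows that $I(v_w)$ lies in the interior of the dominant Weyl chamber, so $A_{w'}$ sits in that chamber and $w'$ is affine Grassmannian; this is (ii). Combined with (i), $I$ restricts to a map $\mathsf{B}\to\mathsf{B}$, and the restriction is an involution by the observation already made in the excerpt.

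I expect the only real subtlety is the verification that $\rho\in P^\vee$ so that $t_\rho$ preserves the affine hyperplane arrangement; this is automatic in type $A$ via the standard self-dual identification, but deserves to be mentioned explicitly since $\rho$ is not in general in the coroot lattice $Q^\vee$ (so $I$ is an element of the \emph{extended} affine Weyl group, not of $\widetilde W$ itself). Everything else is a direct computation with the two standard identities $w_0(\alpha_i)=-\alpha_{i^*}$ and $\langle\rho,\alpha_i\rangle=1$.
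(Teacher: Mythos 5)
Your proof is correct and relies on the same central computation as the paper's: writing $I(v)=w_0(v)+\rho$ and using $w_0(\Lambda_i)=-\Lambda_{i^*}$ (equivalently $w_0(\alpha_i)=-\alpha_{i^*}$) together with $\langle\rho,\alpha_i\rangle=1$ to get $\langle I(v),\alpha_i\rangle=1-\langle v,\alpha_{i^*}\rangle$, whence the coordinates of $I(v)$ on the fundamental weights all lie in $]0,1[$. The one place you add something the paper leaves tacit is the preliminary observation (i): the paper jumps directly from ``coordinates of $I(v)$ lie in $]0,1[$'' to ``$I(v)$ is the center of an alcove in $\mathsf{B}$'', which implicitly uses that $I$ maps alcoves to alcoves; you make this explicit by noting that $I=t_\rho w_0$ lies in the extended affine Weyl group (since $\langle\rho,\alpha\rangle\in\mathbb Z$ for every root $\alpha$) and hence permutes the alcove tessellation, with the affine map $I$ carrying barycenters to barycenters. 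This is a legitimate gap-filling remark rather than a different proof, and the rest of the argument coincides with the paper's.
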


\begin{proof}
Consider $A\in\mathsf{B}$ with center $v$.\ Set $v=\sum_{i=1}^{k}\langle
v_{A},\alpha_{i}\rangle\Lambda_{i}$. Since $w_{0}(\Lambda_{i})=-\Lambda
_{i^{\ast}}$ we get $w_{0}(v)=\sum_{i=1}^{k}-\langle v,\alpha_{i}%
\rangle\Lambda_{i^{\ast}}$.\ We also have $\rho=\sum_{i=1}^{k}\Lambda_{i}$
which gives
\[
I(v)=t_{\rho}\circ w_{0}(v)=\sum_{i=1}^{k}(1-\langle v,\alpha_{i}%
\rangle)\Lambda_{i^{\ast}}.
\]
By hypothesis, $\langle v,\alpha_{i}\rangle\in]0,1[$ for any $i=1,\ldots k$
and thus $1-\langle v,\alpha_{i}\rangle\in]0,1[$. Now the coordinates of
$I(v)$ on the basis of fundamental weights all belong to $]0,1[$. This implies
that $I(v)$ is the center of an alcove in $\mathsf{B}$ and $I$ restricts to an
involution on $\mathsf{B}$.
\end{proof}

\begin{lemma}
\label{stabilize_without_coeff} If $A,A^{\prime}$ are two alcoves of
$\mathsf{B}$ such that $A\xrightarrow{\alpha,r}A^{\prime}$, then we have
\[
I(A^{\prime})\xrightarrow{\alpha^{*},\langle \alpha,\rho\rangle-r}I(A),
\]
where $\alpha^{\ast}=-w_{0}(\alpha)$. In particular, $A\rightarrow A^{\prime}$
if and only if $I(A^{\prime})\rightarrow I(A)$.
\end{lemma}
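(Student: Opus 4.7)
The plan is to reduce everything to the characterization of covering relations in terms of alcove centers recalled just above the lemma: $w\xrightarrow{\alpha,r}w^{\prime}$ precisely when $v_{w^{\prime}}=s_{\alpha,r}(v_w)$ and $r<\langle\alpha,v_{w^{\prime}}\rangle<r+1$. Once this is in place, the natural target is the \emph{conjugation formula}
\begin{equation*}
I\circ s_{\alpha,r}\;=\;s_{\alpha^{\ast},\,\langle\rho,\alpha\rangle-r}\circ I,
\end{equation*}
which simultaneously produces the reflection label $(\alpha^{\ast},\langle\rho,\alpha\rangle-r)$ appearing in the statement and, by evaluation at $v_A$, gives $v_{I(A)}=s_{\alpha^{\ast},\langle\rho,\alpha\rangle-r}(v_{I(A^{\prime})})$.

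To establish the conjugation formula I would apply the commutation relations (\ref{commutRelation}) twice. First, combining $w_0(\alpha)=-\alpha^{\ast}$ with the trivial identity $s_{-\beta,r}=s_{\beta,-r}$ (the hyperplanes $H_{-\beta,r}$ and $H_{\beta,-r}$ coincide) yields $w_0\,s_{\alpha,r}=s_{w_0(\alpha),r}\,w_0=s_{\alpha^{\ast},-r}\,w_0$. Next, the third relation in (\ref{commutRelation}) gives $t_{\rho}\,s_{\alpha^{\ast},-r}=s_{\alpha^{\ast},-r+\langle\rho,\alpha^{\ast}\rangle}\,t_{\rho}$. The key auxiliary identity is $\langle\rho,\alpha^{\ast}\rangle=\langle\rho,\alpha\rangle$, which follows from $w_0(\rho)=-\rho$ (itself a consequence of $w_0(\Lambda_i)=-\Lambda_{i^{\ast}}$ and $\rho=\sum_i\Lambda_i$) together with the $w_0$-invariance of the pairing. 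Composing the two steps produces the displayed formula.

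It then remains to verify the strict inequality $\langle\rho,\alpha\rangle-r<\langle\alpha^{\ast},v_{I(A)}\rangle<\langle\rho,\alpha\rangle-r+1$. Expanding directly gives
\begin{equation*}
\langle\alpha^{\ast},v_{I(A)}\rangle=\langle-w_0(\alpha),\,t_{\rho}w_0(v_A)\rangle=-\langle\alpha,v_A\rangle+\langle\rho,\alpha\rangle,
\end{equation*}
so the target inequality reduces to $r-1<\langle\alpha,v_A\rangle<r$. On the other hand, since all roots in type $A_k$ satisfy $\langle\alpha,\alpha\rangle=2$, the reflection formula $s_{\alpha,r}(v)=v-(\langle\alpha,v\rangle-r)\alpha$ yields $\langle\alpha,v_{A^{\prime}}\rangle=2r-\langle\alpha,v_A\rangle$, and transporting the hypothesis $r<\langle\alpha,v_{A^{\prime}}\rangle<r+1$ through this identity gives exactly $r-1<\langle\alpha,v_A\rangle<r$. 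The final ``in particular'' assertion is then immediate because $I$ is an involution on $\mathsf{B}$ by Lemma \ref{Lem_IonB}.

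The main technical obstacle is the sign bookkeeping in the conjugation formula: the shift $\langle\rho,\alpha\rangle-r$ arises from a cancellation between the $-r$ coming out of $s_{w_0(\alpha),r}=s_{\alpha^{\ast},-r}$ and the $+\langle\rho,\alpha^{\ast}\rangle$ produced by moving $t_{\rho}$ past the reflection, and one must use consistently the three identities $w_0(\alpha)=-\alpha^{\ast}$, $w_0(\rho)=-\rho$, and $s_{-\beta,r}=s_{\beta,-r}$ to avoid a spurious sign. Once this is consolidated, the verification of the open interval condition is a one-line consequence of the simply-laced identity $\langle\alpha,\alpha\rangle=2$.
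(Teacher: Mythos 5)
Your proposal is correct and follows essentially the same route as the paper: you use the commutation relations of \eqref{commutRelation} to move $t_\rho w_0$ past $s_{\alpha,r}$, the identities $w_0(\alpha)=-\alpha^\ast$, $s_{-\beta,r}=s_{\beta,-r}$ and $\langle\rho,\alpha^\ast\rangle=\langle\rho,\alpha\rangle$ to obtain $I(A)=s_{\alpha^\ast,\langle\alpha,\rho\rangle-r}I(A^\prime)$, and then you verify the strict inequalities $\langle\alpha,\rho\rangle-r<\langle\alpha^\ast,v_{I(A)}\rangle<\langle\alpha,\rho\rangle-r+1$. The only difference is that you make explicit the step $r-1<\langle\alpha,v_A\rangle<r$ (via $\langle\alpha,\alpha\rangle=2$), which the paper simply asserts as ``by hypothesis.''
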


\begin{proof}
We have
\[
I(A^{\prime})=t_{\rho}\circ w_{0}\circ s_{\alpha,r}(A)=t_{\rho}s_{w_{0}%
(\alpha),r}w_{0}(A)=s_{w_{0}(\alpha),r+\langle\rho,w_{0}(\alpha)\rangle
}t_{\rho}w_{0}(A).
\]
Since $w_{0}(\alpha)=-\alpha^{\ast}$ and $s_{\alpha,r}=s_{-\alpha,-r}$ for any
root $\alpha$, we get
\begin{equation}
I(A)=s_{\alpha^{\ast},\langle\alpha,\rho\rangle-r}I(A^{\prime}).
\label{relationReflection}%
\end{equation}
Moreover, we can write
\[
\langle I(v),\alpha^{\ast}\rangle=\langle t_{\rho}\circ w_{0}(v),\alpha^{\ast
}\rangle=\langle\rho,\alpha^{\ast}\rangle+\langle v,w_{0}(\alpha^{\ast
})\rangle=\langle\rho,\alpha^{\ast}\rangle-\langle v,\alpha\rangle
\]
where $v$ is the center of $A$. Since $\langle\rho,\alpha\rangle=\langle
\rho,\alpha^{\ast}\rangle$, this yields
\[
\langle I(v),\alpha^{\ast}\rangle=\langle\rho,\alpha\rangle-\langle
v,\alpha\rangle.
\]
By hypothesis, $r-1<\langle v,\alpha\rangle<r$, thus
\[
\langle\rho,\alpha\rangle-r<\langle I(v),\alpha^{\ast}\rangle<\langle
\rho,\alpha\rangle-r+1.
\]
The last inequalities together with \eqref{relationReflection} implies that
\[
I(A^{\prime})\xrightarrow{\alpha^{*},\langle \alpha,\rho\rangle-r}I(A).
\]

\end{proof}

\begin{lemma}
\label{stabilize_with_coeff} Suppose that $A,A^{\prime}$ are two elements of
$\mathsf{B}$ such that $A\xrightarrow{\alpha,r}t_{\Lambda_{i}}A^{\prime}$.
Then, we have $\alpha=\alpha_{i},r=1$, and
\[
I(A^{\prime})\xrightarrow{\alpha_{i^{*}},1}t_{\Lambda_{i^{\ast}}}I(A).
\]
In particular, $A\rightarrow t_{\Lambda_{i}}A^{\prime}$ if and only if
$I(A^{\prime})\rightarrow t_{\Lambda_{i^{\ast}}}I(A)$.
\end{lemma}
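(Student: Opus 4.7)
The plan is to split the argument into two parts mirroring the structure of Lemma~\ref{stabilize_without_coeff}: first identify the reflection explicitly by showing $\alpha=\alpha_i$ and $r=1$, then obtain the claimed arrow by applying the commutation relations \eqref{commutRelation} to $I = t_\rho \circ w_0$.

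For the first step, I would exploit the fact that both $v_A$ and $v_{A'}$ lie in the open ``box'' $\mathsf{B}$, so their coordinates in the basis of fundamental weights all belong to $]0,1[$. Consequently, $t_{\Lambda_i}(v_{A'}) = v_{A'}+\Lambda_i$ has its $j$-th fundamental weight coordinate in $]0,1[$ for $j\neq i$ and in $]1,2[$ for $j=i$. Since a single reflection $s_{\alpha,r}$ swaps the two alcoves across $H_{\alpha,r}$, the real numbers $\langle\alpha,v_A\rangle$ and $\langle\alpha,v_{A'}+\Lambda_i\rangle$ must be strictly separated by the integer $r$. Testing a simple root $\alpha_j$ with $j\neq i$ is immediately impossible because both pairings then lie in $]0,1[$; testing $\alpha_i$ yields pairings in $]0,1[$ and $]1,2[$, forcing $r=1$. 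For a general positive classical root $\alpha=\alpha_p+\cdots+\alpha_q$, I use that $v_A - v_{A'} - \Lambda_i$ must be proportional to $\alpha^\vee=\alpha$ (type $A_k$ being simply laced), write the equation $v_A-v_{A'}=\Lambda_i+c\alpha$ in the basis $(\Lambda_j)$, and observe that each coordinate of $v_A-v_{A'}$ lies in $]-1,1[$. Expanding $\alpha$ in fundamental weights then forces $p=q=i$. The tedious case analysis on the shape of $\alpha$ is the main obstacle of the proof, but the box constraint is tight enough to leave $\alpha_i$ as the only possibility.

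Once $\alpha=\alpha_i$ and $r=1$ are established, the second step is a direct computation using \eqref{commutRelation} and the identity $w_0(\alpha_i)=-\alpha_{i^*}$, $w_0(\Lambda_i)=-\Lambda_{i^*}$. I would first compute
\[
I(t_{\Lambda_i}A')=t_\rho w_0 t_{\Lambda_i}(A')=t_{\rho-\Lambda_{i^*}}w_0(A')=t_{-\Lambda_{i^*}}I(A'),
\]
and then
\[
I\circ s_{\alpha_i,1}=t_\rho s_{w_0(\alpha_i),1}w_0=t_\rho s_{\alpha_{i^*},-1}w_0=s_{\alpha_{i^*},-1+\langle\rho,\alpha_{i^*}\rangle}t_\rho w_0=s_{\alpha_{i^*},0}\circ I,
\]
using $\langle\rho,\alpha_{i^*}\rangle=1$. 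Combining both identities with $s_{\alpha_i,1}(A)=t_{\Lambda_i}A'$ gives $s_{\alpha_{i^*},0}I(A)=t_{-\Lambda_{i^*}}I(A')$, hence, via $t_{\Lambda_{i^*}}s_{\alpha_{i^*},0}=s_{\alpha_{i^*},1}t_{\Lambda_{i^*}}$, the equality $t_{\Lambda_{i^*}}I(A)=s_{\alpha_{i^*},1}I(A')$. To upgrade this to the labelled arrow $I(A')\xrightarrow{\alpha_{i^*},1}t_{\Lambda_{i^*}}I(A)$, it remains to check $1<\langle\alpha_{i^*},v_{I(A)}+\Lambda_{i^*}\rangle<2$. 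By Lemma~\ref{Lem_IonB}, $I(A)\in\mathsf{B}$, so $\langle\alpha_{i^*},v_{I(A)}\rangle\in]0,1[$, and adding $\langle\alpha_{i^*},\Lambda_{i^*}\rangle=1$ gives the desired bound. The last statement (the ``in particular'') follows since $I$ is an involution on $\mathsf{B}$ and the construction is symmetric in $(A,A')$ versus $(I(A'),I(A))$.
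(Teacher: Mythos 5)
Your Step 2 — using the commutation relations \eqref{commutRelation} together with $w_0(\alpha_i)=-\alpha_{i^*}$, $w_0(\Lambda_i)=-\Lambda_{i^*}$ to derive $t_{\Lambda_{i^*}}I(A)=s_{\alpha_{i^*},1}I(A')$, and then checking the inequality $1<\langle\alpha_{i^*},v_{I(A)}+\Lambda_{i^*}\rangle<2$ via Lemma~\ref{Lem_IonB} — is correct and is essentially the same computation as the paper's, just organized with slightly different intermediate factorizations.

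Your Step 1, however, has a genuine gap. The paper does not argue by coordinate estimates: it observes that $A\in\mathsf{B}$, that $s_{\alpha,r}(A)=t_{\Lambda_i}A'$ is not in $\mathsf{B}$ (its $i$-th coordinate lies in $]1,2[$), and that these two alcoves share a facet supported on $H_{\alpha,r}$. Since one alcove sits inside the convex region delimited by the hyperplanes $H_{\alpha_j,0}$ and $H_{\alpha_j,1}$, $1\leq j\leq k$, and the other outside, that facet must lie on the boundary of the region, so $H_{\alpha,r}$ is one of the bounding hyperplanes; hence $\alpha$ is simple and $r\in\{0,1\}$. Remaining in the dominant chamber forces $r=1$, and $t_{-\Lambda_i}s_{\alpha,1}(A)=A'\in\mathsf{B}$ forces $\alpha=\alpha_i$. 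Your substitute — writing $v_A-v_{A'}=\Lambda_i+c\alpha$ and imposing that each coordinate of $v_A-v_{A'}$ lie in $]-1,1[$ — does \emph{not} force $\alpha$ simple. For $\alpha=\alpha_p+\cdots+\alpha_q$ with $p<q$ and $i=p$, one has $\alpha=-\Lambda_{p-1}+\Lambda_p+\Lambda_q-\Lambda_{q+1}$ in fundamental-weight coordinates, so the nonzero coordinates of $\Lambda_p+c\alpha$ are $-c,\,1+c,\,c,\,-c$, and all lie in $]-1,1[$ for every $c\in]-1,0[$ — exactly the sign that the arrow direction ($c=\langle v_A,\alpha\rangle-r<0$) requires. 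The symmetric case $i=q$ behaves the same way. So the box inequality together with the ``separated by the integer $r$'' observation leaves a whole interval of admissible $c$'s for a non-simple $\alpha$, and the claimed conclusion ``forces $p=q=i$'' does not follow. To close the argument you need the adjacency of the two alcoves to pin the separating hyperplane to a wall of $\mathsf{B}$ — which is exactly the geometric input the paper uses and your coordinate bookkeeping does not capture.
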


\begin{proof}
Since $A\xrightarrow{\alpha,r}t_{\Lambda_{i}}A^{\prime},$ the alcove
$s_{\alpha,r}(A)$ does not belong to $\mathsf{B}$, but belongs to the dominant
Weyl chamber. Also $\mathsf{B}$ is delimited by the affine hyperplanes
$H_{\alpha_{i},0}$ and $H_{\alpha_{i},1}$ for $1\leq i\leq k$, this implies
that $r=1$ and $\alpha$ is a simple root. Since $t_{-\Lambda_{i}}s_{\alpha
,1}(A)$ is contained in the dominant Weyl chamber, this yields that
$\alpha=\alpha_{i}$. Let $v$ and $v^{\prime}$ be the centers of $A$ and
$A^{\prime}$, respectively.

Then, $v=s_{\alpha,r}\circ t_{\Lambda_{i}}v^{\prime}$ for $s_{\alpha
,r}(v)=t_{\Lambda_{i}}v^{\prime}$. Using \eqref{commutRelation} we so derive%
\begin{multline*}
I(v)=t_{\rho}\circ w_{0}\circ s_{\alpha,r}\circ t_{\Lambda_{i}}(v^{\prime
})=t_{\rho}\circ w_{0}\circ t_{\Lambda_{i}}\circ s_{\alpha,r-\langle
\alpha,\Lambda_{i}\rangle}(v^{\prime})\\
=t_{\rho}\circ t_{w_{0}(\Lambda_{i})}\circ s_{w_{0}(\alpha),r-\langle
\alpha,\Lambda_{i}\rangle}\circ w_{0}(v^{\prime})=t_{w_{0}(\Lambda_{i})}\circ
s_{w_{0}(\alpha),r-\langle\alpha,\Lambda_{i}\rangle+\langle\rho,w_{0}%
(\alpha)\rangle}\circ t_{\rho}\circ w_{0}(v^{\prime})\\
=t_{w_{0}(\Lambda_{i})}\circ s_{w_{0}(\alpha),r-\langle\alpha,\Lambda_{i}%
+\rho\rangle}I(v^{\prime})
\end{multline*}
where $\langle\rho,w_{0}(\alpha)\rangle=\langle w_{0}(\rho),\alpha
\rangle=-\langle\rho,\alpha\rangle$ for the last equality. From the equality
$w_{0}(\Lambda_{i})=-\Lambda_{i^{\ast}}$, we get
\[
t_{\Lambda_{i^{\ast}}}I(v)=s_{\alpha^{\ast},\langle\alpha,\Lambda_{i}%
+\rho\rangle-r}I(v^{\prime}).
\]
Finally, we have $\langle I(v^{\prime}),\alpha^{\ast}\rangle=\langle
\rho,\alpha^{\ast}\rangle-\langle v^{\prime},\alpha\rangle=\langle\rho
,\alpha\rangle-\langle v^{\prime},\alpha\rangle$, so that the hypothesis
$r<\langle t_{\Lambda_{i}}v^{\prime},\alpha\rangle<r+1$ gives
\begin{align*}
r-\langle\Lambda_{i},\alpha\rangle &  <\langle v^{\prime},\alpha
\rangle<r+1-\langle\Lambda_{i},\alpha\rangle\text{ and}\\
\langle\rho+\Lambda_{i},\alpha\rangle-r-1  &  <\langle I(v^{\prime}%
),\alpha^{\ast}\rangle<\langle\rho+\Lambda_{i},\alpha\rangle-r.
\end{align*}
So
\[
I(w^{\prime}%
)\xrightarrow{\alpha^{*},\langle \alpha,\Lambda_{i}+\rho\rangle-r}t_{\Lambda
_{i^{\ast}}}I(w).
\]
Since $\alpha=\alpha_{i}$ and $r=1$, $\alpha^{\ast}=\alpha_{i}^{\ast}$ and
\[
\langle\alpha,\Lambda_{i}+\rho\rangle-r=\langle\alpha_{i},\Lambda_{i}%
+\rho\rangle-1=1.
\]

\end{proof}

\bigskip

Recall from Section \ref{subset_Lattices} that $\mathcal{B}_{k}$ is the Hasse
diagram for the weak Bruhat order on affine Grassmannian elements. We also
have a bijection which associates to $\lambda\in\mathcal{B}_{k}$ its
corresponding affine Grassmannian element $w_{\lambda}$. Let $1\leq a\leq k$.
Since the multiplication of $s_{\lambda}^{(k)}$ by $s_{R_{a}}$ is simply
$s_{\lambda\cup R_{a}}^{(k)}$, there exists a map $T_{a}$ on the set of
alcoves in the Weyl chamber such that $T_{a}(A_{w_{\lambda}})=A_{w_{\lambda
\cup R_{a}}}$. By \cite{BBTZ}, interpreting $k$-Schur functions as elements of
the affine nilCoxeter algebra yields that $T_{a}$ coincides with the
translation $t_{\Lambda_{a}}$ on the alcoves of the dominant Weyl chamber. In
particular, the partition $\lambda$ is irreducible if and only $A_{w_{\lambda
}}$ belongs to $\mathsf{B}$. By the definition of the matrix $\Phi
_{(r_{1},\ldots,r_{k})}$ in Section \ref{subsec_MatrixFI} we have
$\Phi_{(r_{1},\ldots,r_{k})}(\lambda,\mu)=1$ if $\lambda$ and $\mu$ are two
irreducible partitions such that $\lambda\rightarrow\mu$ on $\mathcal{B}_{k}$,
and $\Phi_{(r_{1},\ldots,r_{k})}(\lambda,\mu)=r_{a_{1}}+\cdots+r_{a_{s}}$ if
and only if $\lambda\rightarrow(\mu\cup R_{a_{1}}),\ldots,\lambda
\rightarrow(\mu\cup R_{a_{s}})$ on $\mathcal{B}_{k}$.

\begin{proposition}
\label{PhiSymmetryGrass} There exists an involutive permutation matrix
$\mathsf{I}$ such that
\[
\mathsf{I}\Phi_{(r_{1},\ldots,r_{k})}\mathsf{I}=\Phi_{(r_{k},\ldots,r_{1}%
)}^{t}%
\]
for any $(r_{1},\dots,r_{k})\in\mathbb{R}_{\geq0}^{k}$.
\end{proposition}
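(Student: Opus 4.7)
The plan is to take $\mathsf{I}$ to be the permutation matrix of the involution induced on $\mathcal{P}_{\mathrm{irr}}$ by the geometric involution $I$ of Lemma~\ref{Lem_IonB}, and then verify the identity entry by entry using Lemmas~\ref{stabilize_without_coeff} and \ref{stabilize_with_coeff}. By the paragraph preceding the proposition, the bijection $\lambda \mapsto A_{w_\lambda}$ identifies $\mathcal{P}_{\mathrm{irr}}$ with the set $\mathsf{B}$ of alcoves on which $I$ acts, so $I$ transports to an involution of $\mathcal{P}_{\mathrm{irr}}$; the associated permutation matrix $\mathsf{I}$ is then involutive by construction.

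Second, I would reformulate the claim. Unwinding the matrix products gives $(\mathsf{I}\Phi_{(r_1,\ldots,r_k)}\mathsf{I})(\lambda,\mu) = \Phi_{(r_1,\ldots,r_k)}(I(\lambda), I(\mu))$, whereas $\Phi_{(r_k,\ldots,r_1)}^{t}(\lambda,\mu) = \Phi_{(r_k,\ldots,r_1)}(\mu,\lambda)$, so it suffices to prove the pointwise identity
\[
\Phi_{(r_1,\ldots,r_k)}(I(\lambda), I(\mu)) = \Phi_{(r_k,\ldots,r_1)}(\mu,\lambda)
\]
for all $\lambda,\mu \in \mathcal{P}_{\mathrm{irr}}$.

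Third, I would match the two sides contribution by contribution, using the explicit description of $\Phi$ recalled just before the proposition: each entry is a sum of a weight $1$ coming from a direct cover between two irreducible partitions and weights of the form $r_a$ coming from covers $\kappa \to \kappa' \cup R_a$, which in alcove terms are arrows $A_\kappa \to t_{\Lambda_a} A_{\kappa'}$. For the weight~$1$ terms, Lemma~\ref{stabilize_without_coeff} gives the bijection $I(A_\lambda) \to I(A_\mu)$ iff $A_\mu \to A_\lambda$, so the two sides agree. For the weight $r_a$ terms, Lemma~\ref{stabilize_with_coeff} converts an arrow $I(A_\lambda) \to t_{\Lambda_a} I(A_\mu)$ into an arrow $A_\mu \to t_{\Lambda_{a^*}} A_\lambda$, where $a^* = k+1-a$; the corresponding contribution on the right-hand side is $(r_k,\ldots,r_1)_{a^*} = r_{k+1-a^*} = r_a$, which matches. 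Since both lemmas provide the reverse implications as well, the two weighted sums are equal.

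The main obstacle is mostly bookkeeping: one has to check that the Dynkin involution $a \mapsto k+1-a$ appearing in Lemma~\ref{stabilize_with_coeff} really produces exactly the tuple reversal $(r_1,\ldots,r_k) \mapsto (r_k,\ldots,r_1)$, and that the direction-reversal built into the two alcove lemmas is precisely what produces the transpose on the right-hand side. Once these identifications are laid out carefully, no further computation is needed and the proposition follows.
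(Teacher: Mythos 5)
Your proof is correct and follows essentially the same route as the paper's: both take $\mathsf{I}$ to be the permutation matrix of the involution $I$ acting on $\mathcal{P}_{\mathrm{irr}}$ via the identification with $\mathsf{B}$, both verify the identity entry by entry using the dictionary between entries of $\Phi$ and arrows/$t_{\Lambda_a}$-twisted arrows in the alcove picture, and both invoke Lemma~\ref{stabilize_without_coeff} for the weight-$1$ entries and Lemma~\ref{stabilize_with_coeff} (with the Dynkin involution $a\mapsto k+1-a$ producing the tuple reversal) for the weight-$r_a$ entries. Your explicit reformulation $\Phi_{(r_1,\ldots,r_k)}(I(\lambda),I(\mu))=\Phi_{(r_k,\ldots,r_1)}(\mu,\lambda)$ is a clean way to organize the same bookkeeping that the paper carries out on the symbolic matrix $\boldsymbol{\Phi}$ before specialization.
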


\begin{proof}
Let us write $\boldsymbol{\Phi=\Phi}_{(R_{1},\ldots,R_{k})}$ and
$\boldsymbol{\Phi}_{(R_{k},\ldots,R_{1})}$ for the matrix $\boldsymbol{\Phi}$
in which each $R_{a}$ is flipped in $R_{k-a+1}$. We get that
\begin{equation}
\left\{
\begin{matrix}
\boldsymbol{\Phi}_{(R_{1},\ldots,R_{k})}(\lambda,\mu)=1\Longleftrightarrow
w_{\lambda}\rightarrow w_{\mu}\\
\boldsymbol{\Phi}_{(R_{1},\ldots,R_{k})}(\lambda,\mu)=s_{R_{a_{1}}}%
+\cdots+s_{R_{a_{s}}}\Longleftrightarrow w_{\lambda}\rightarrow t_{\Lambda
_{a_{1}}}w_{\mu},\ldots,w_{\lambda}\rightarrow t_{\Lambda_{a_{s}}}w_{\mu}.
\end{matrix}
\right.  \label{PhiGrassmannian}%
\end{equation}
By Lemma \ref{Lem_IonB}, it makes sense to consider the matrix $\mathsf{I}$ of
the restriction of the involution $I$ on the basis $\mathcal{I}$. By Lemma
\ref{stabilize_without_coeff} and \eqref{PhiGrassmannian}, $\boldsymbol{\Phi
}_{(R_{1},\ldots,R_{k})}(\lambda,\mu)=1$ if and only if $\boldsymbol{\Phi
}_{(R_{k},\ldots,R_{1})}^{t}(I(\mu),I(\lambda))=1$, and $\boldsymbol{\Phi
}_{(R_{1},\ldots,R_{k})}=s_{R_{a_{1}}}+\cdots+s_{R_{a_{s}}}$ if and only if
$\boldsymbol{\Phi}_{(R_{1},\ldots,R_{k})}^{t}(I(\mu),I(\lambda
))=s_{R_{k+1-a_{1}}}+\cdots+s_{R_{k+1-a_{s}}}$. Since nonzero entries of
$\boldsymbol{\Phi}_{(R_{1},\ldots,R_{k})}$ are either $1$ or a sum $R_{i_{1}%
}+\cdots+R_{i_{s}}$ with $1\leq i_{j}\leq k$, this shows that $\mathsf{I}%
\boldsymbol{\Phi}_{(R_{1},\ldots,R_{k})}\mathsf{I}=\boldsymbol{\Phi}%
_{(R_{k},\ldots,R_{1})}^{t}$ and thus by specializing $\mathsf{I}\Phi
_{(r_{1},\ldots,r_{k})}\mathsf{I}=\Phi_{(r_{k},\ldots,r_{1})}^{t}.$
\end{proof}

The matrix $\Phi_{(r_{1},\ldots,r_{k})}$ exhibits thus two symmetries relating
to the $k$-conjugation and the involution $I$.

\begin{proposition}
\label{good_Symmetry} The matrices $\mathsf{I}$ and $\Omega$ commute, and
\[
(\mathsf{I}\Omega)\Phi_{(r_{1},\ldots,r_{k})}(\mathsf{I}\Omega)^{-1}%
=(\mathsf{I}\Omega)\Phi_{(r_{1},\ldots,r_{k})}(\mathsf{I}\Omega)=\Phi
_{(r_{1},\ldots,r_{k})}^{t}.
\]

\end{proposition}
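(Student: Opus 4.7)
The overall strategy is to first derive a conjugation identity from the two previously established symmetries, and then reduce the proposition to the commutativity of $\mathsf{I}$ and $\Omega$, which I would prove by a short geometric computation on $\mathbb{R}^k$.

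First, I would combine the two conjugation formulas. Applying Proposition \ref{PhiSymmetryGrass} with the reversed tuple gives $\mathsf{I}\Phi_{(r_k,\ldots,r_1)}\mathsf{I}=\Phi_{(r_1,\ldots,r_k)}^t$. Then, using \eqref{PhiConjugation} to rewrite $\Phi_{(r_k,\ldots,r_1)}=\Omega\Phi_{(r_1,\ldots,r_k)}\Omega$, we get
\[
(\mathsf{I}\Omega)\Phi_{(r_1,\ldots,r_k)}(\Omega\mathsf{I})=\mathsf{I}\bigl(\Omega\Phi_{(r_1,\ldots,r_k)}\Omega\bigr)\mathsf{I}=\mathsf{I}\Phi_{(r_k,\ldots,r_1)}\mathsf{I}=\Phi_{(r_1,\ldots,r_k)}^t.
\]
This is pure bookkeeping and holds unconditionally; the remaining content of the proposition is precisely that $\mathsf{I}\Omega=\Omega\mathsf{I}$, because once we know this, $\Omega\mathsf{I}$ can be replaced by $\mathsf{I}\Omega$ on the left of the previous identity, and, since $\mathsf{I},\Omega$ are involutions, $(\mathsf{I}\Omega)^2=\mathsf{I}^2\Omega^2=I$ so $(\mathsf{I}\Omega)^{-1}=\mathsf{I}\Omega$, yielding both equalities in the statement.

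The key step is therefore to establish that $\mathsf{I}$ and $\Omega$ commute, which I would prove geometrically. The $k$-conjugation $\omega_k$ sends $s_{R_a}$ to $s_{R_{k-a+1}}$, hence intertwines the translations $t_{\Lambda_a}\leftrightarrow t_{\Lambda_{a^*}}$; consequently it implements, at the level of the Grassmannian alcove structure, the Dynkin diagram automorphism of $\widetilde{A}_k$ fixing $\alpha_0$ and swapping $\alpha_i\leftrightarrow\alpha_{i^*}$. Since $-w_0(\Lambda_i)=\Lambda_{i^*}$, the linear involution $-w_0$ of $\mathbb{R}^k$ preserves the fundamental alcove $A^{(0)}$ and the dominant Weyl chamber, and realizes this diagram automorphism geometrically; thus $\Omega$ corresponds on $\mathsf{B}$ to the restriction of $-w_0$. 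A direct calculation using $w_0(\rho)=-\rho$ then gives, for all $v\in\mathbb{R}^k$,
\[
(-w_0)\circ I(v)=-w_0(\rho)-v=\rho-v=\rho+w_0(-w_0(v))=I\circ(-w_0)(v),
\]
so the two involutions coincide with $v\mapsto\rho-v$ when composed in either order. Restricting to centers of alcoves of $\mathsf{B}$, this proves $\mathsf{I}\Omega=\Omega\mathsf{I}$.

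The main obstacle is the justification that $\omega_k$ acts on the alcoves of $\mathsf{B}$ as $-w_0$; everything else is formal. An alternative would be to verify $I\circ\omega_k=\omega_k\circ I$ directly on each $\lambda\in\mathcal{P}_{\mathrm{irr}}$ using Lemma \ref{stabilize_without_coeff} and \ref{stabilize_with_coeff} together with the graph automorphism property of $\omega_k$ on $\mathcal{B}_k$, but the geometric identification above is considerably cleaner since both maps reduce to the same linear involution $v\mapsto\rho-v$.
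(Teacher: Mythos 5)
Your proof is correct and follows essentially the same route as the paper: both establish $\mathsf{I}\Omega=\Omega\mathsf{I}$ by identifying the action of $\omega_k$ on alcoves with $-w_0$ and then observing that $-w_0$ commutes with $I=t_\rho w_0$ (your explicit computation that each composition is $v\mapsto\rho-v$ matches the paper's one-line "commutes with $t_\rho w_0$"), after which the conjugation identity follows from combining Proposition \ref{PhiSymmetryGrass} with \eqref{PhiConjugation}. The only point where you are slightly less precise than the paper is the justification that $\omega_k$ realizes $-w_0$ on alcoves: the paper argues via the reduced-word substitution $s_{i_j}\mapsto s_{i_j^*}$ obtained from conjugation of $(k+1)$-cores, which pins down the action on every alcove, whereas your appeal to $\omega_k$ intertwining the translations $t_{\Lambda_a}\leftrightarrow t_{\Lambda_{a^*}}$ is suggestive but, as you note yourself, does not by itself determine the action on all of $\mathsf{B}$.
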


\begin{proof}
In order to show that $\Omega$ and $\mathsf{I}$ commute, it suffices to show
that the involutions $I$ and $\omega_{k}$ commute at the level of their action
on alcoves in the dominant Weyl chamber. On the one hand, $I$ is the operator
$t_{\rho}w_{0}$. On the other hand, $\omega_{k}$ sends the $(k+1)$-core
associated to $\lambda$ to its usual conjugate. Hence, if we have the reduced
decomposition $w_{\lambda}=s_{i_{1}}s_{i_{2}}\cdots s_{i_{k}}$, we get the
reduced decomposition $w_{\omega(\lambda)}=s_{i_{1}^{\ast}}\cdots
s_{i_{k}^{\ast}}.$ This implies that the action of $\omega_{k}$ on the alcoves
coincide with that of $-w_{0}$ which commutes with $t_{\rho}w_{0}$ because
\[
t_{\rho}\circ-id=t_{\rho}w_{0}(-w_{0})=-w_{0}t_{\rho}w_{0}=-id\circ t_{-\rho
}.
\]
So we have $\mathsf{I}\Omega=\Omega\mathsf{I}$. The second part of the
proposition is a direct consequence of Proposition \ref{PhiSymmetryGrass} and
(\ref{PhiConjugation}).
\end{proof}

\subsection*{Drift under harmonic measures}

Let $\mathcal{A}_{k}$ be the set of alcoves in the dominant Weyl chamber. We
denote by $\Gamma_{f}(\mathcal{A}_{k})$ the set of reduced finite alcove paths
which start at $A^{(0)}$ and remain in the dominant Weyl chamber. For any $A$
in $\mathcal{A}_{k}$, write $\lambda_{A}\in\mathcal{B}_{k}$ its corresponding
$k$-bounded partition. Conversely recall that for any $\lambda\in
\mathcal{B}_{k}$, $A_{w_{\lambda}}\in\mathcal{A}_{k}$ is the alcove associated
to $\lambda$. Let $\varphi$ be an extremal harmonic measure on $\mathcal{B}%
_{k}$ associated to $\vec{r}=(r_{1},\ldots,r_{k})\in\mathbb{R}^{k}$, and let
$(A_{n})_{n\geq1}$ be the central Markov chain on $\mathcal{A}_{k}$ defined in
\S \ \ref{definitionCentral_Markov}. By considering for each $n\geq1$ the
center $v_{n}$ of the alcove $A_{n}$, we get a genuine random walk
$(v_{n})_{n\geq1}$ on $\mathbb{R}^{k}$. Our goal is now to prove the law of
large numbers for this random walk. This will be obtained by using the matrix
$\Phi=\Phi_{r_{1},\ldots,r_{k}}$ and a reduced version of the walk
$(v_{n})_{n\geq0}$. For simplicity, we will assume that $\Phi$ is irreducible.
Nevertheless, by continuity arguments, Theorem \ref{result_drift} below also
holds in full generality.

Observe that $1$ is the maximal eigenvalue of $\Phi$ for $\varphi$ is assumed
extremal harmonic. We denote by $X$ the corresponding left eigenvector of
$\Phi$ normalized so that $X(\emptyset)=1$. Let $\widehat{X}$ be the right
eigenvector also for the eigenvalue $1$ normalized so that $(X,\widehat{X})=1$
(here $(\cdot,\cdot)$ is the usual scalar product on vectors).

Let $\mathcal{M}_{k}$ be the multigraph with set of vertices $\mathsf{B}$ such
that for each affine reflection $s_{\alpha,r}$, we have an edge between $A$
and $A^{\prime}$ when $A^{\prime}=s_{\alpha,r}A$ or $t_{\Lambda_{i}}A^{\prime
}=s_{\alpha,r}A$. We color each edge $e$ by colors in $\{0,1,\ldots,k\}$ so
that $c(e)=i$ if $\alpha=\alpha_{i}$ is simple and with $c(e)=0$ otherwise
\footnote{Observe that $\mathcal{M}_{k}$ is the graph with adjacency matrix
$\Phi$ except that each arrow with weight $r_{i_{1}}+\cdots+r_{i_{m}}$ is
split in $m$ arrows with weights $r_{i_{1}},\ldots,r_{i_{m}}$.}. Let
$(\widetilde{A}_{n})_{n\geq1}$ be the Markov chain on the graph $\mathcal{M}%
_{k}$ starting on $A^{(0)}$ with transition probabilities $\widetilde
{\mathbb{P}}(A\xrightarrow{e}A^{\prime})=r_{c(e)}\frac{X(\lambda_{A^{\prime}%
})}{X(\lambda_{A})}$, with the convention $r_{0}=1$. Note that $(\widetilde
{A}_{n})_{n\geq1}$ is indeed a random walk, since
\[
\sum_{\substack{e,A^{\prime}\\A\text{ gives }A^{\prime}\text{ through }%
e}}r_{c(e)}\frac{X(\lambda_{A^{\prime}})}{X(\lambda_{A})}=\sum_{A,A^{\prime}%
}\Phi_{\lambda_{A},\lambda_{A^{\prime}}}\frac{X(\lambda_{A^{\prime}}%
)}{X(\lambda_{A})}=\frac{X(\lambda_{A})}{X(\lambda_{A})}=1.
\]
The weight $\mathrm{wt}(\gamma)$ of a path $\gamma=A_{0}%
\xrightarrow{e_{1}}A_{1}\xrightarrow{e_{2}}\dots\xrightarrow{e_{n}}A_{n}$ is
defined by $\mathrm{wt}(\gamma)=\sum_{i=1}^{n}\Lambda_{c(e_{i})}$, with the
convention that $\Lambda_{0}=0$. We denote by $\ell$ the associated length
function. Let $\Gamma_{f}(\mathcal{M}_{k})$ and $\Gamma_{f}(\mathcal{A}_{k})$
be respectively the sets of finite paths on $\mathcal{M}_{k}$ and
$\mathcal{A}_{k}$ starting at $A_{0}$.

We define $p:\Gamma_{f}(\mathcal{M}_{k})\longrightarrow\mathcal{A}_{k}$ by
$p(\gamma)=\gamma(n)+\mathrm{wt}(\gamma)$, where $n$ is the length of $\gamma
$, and we extend the map $p$ to a map $L:\Gamma_{f}(\mathcal{M}_{k}%
)\rightarrow\Gamma_{f}(\mathcal{A}_{k})$ where $L(\gamma)=(p(A_{0}%
),p(A_{0},A_{1}),\ldots,p(A_{0},\ldots,A_{n}))$. Let $M:\Gamma_{f}%
(\mathcal{A}_{k})\rightarrow\Gamma_{f}(\mathcal{M}_{k})$ be the map which
sends a path $(A_{0}\rightarrow A_{1}\cdots\rightarrow A_{n})$ to the path
$(\tilde{A}_{0}\xrightarrow{e_{1}}\tilde{A}_{1}\xrightarrow{e_{2}}\dots
\xrightarrow{e_{n}}\tilde{A}_{n})$, where $e_{i}$ is the unique edge from
$\tilde{A}_{i-1}$ to $\tilde{A}_{i}$ such that $c(e_{i})=j$ if $A_{i}%
=s_{\alpha_{j},k}A_{i-1}$ for some $k\in\mathbb{Z}_{>0}$ and $1\leq j\leq k$,
and $e_{i}$ is the unique edge from $\tilde{A}_{i-1}$ to $\tilde{A}_{i}$ with
color $0$ if $A_{i}=s_{\alpha,k}A_{i-1}$ with $\alpha$ non-simple and
$k\in\mathbb{Z}_{>0}$. It is easy to see that $LM=id_{\Gamma_{f}%
(\mathcal{A}_{k})}$ and $ML=id_{\Gamma_{f}(\mathcal{M}_{k})}$.

\begin{lemma}
The image of the Markov chain $(\tilde{A}_{n})_{n\geq0}$ through the map $L$
is exactly the Markov chain $(A_{n})_{n\geq0}$.
\end{lemma}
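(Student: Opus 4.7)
My strategy is to compute the probability of any given finite path under both Markov chains and show they coincide, using the bijection $L \leftrightarrow M$ between $\Gamma_f(\mathcal{M}_k)$ and $\Gamma_f(\mathcal{A}_k)$ together with the factorization property of $k$-Schur functions (Corollary \ref{reducedCase}).

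Fix a path $\gamma = (A_0 \to A_1 \to \cdots \to A_n) \in \Gamma_f(\mathcal{A}_k)$, and let $\widetilde\gamma = M(\gamma) = (\widetilde A_0 \xrightarrow{e_1} \cdots \xrightarrow{e_n} \widetilde A_n)$ be the corresponding path on $\mathcal{M}_k$. First I would compute the probability of $\widetilde\gamma$ under $(\widetilde A_n)_{n\ge 0}$; by definition of the transition probabilities this telescopes to
\[
\widetilde{\mathbb{P}}(\widetilde\gamma) \;=\; \prod_{i=1}^{n} r_{c(e_i)}\,\frac{X(\lambda_{\widetilde A_i})}{X(\lambda_{\widetilde A_{i-1}})} \;=\; \frac{X(\lambda_{\widetilde A_n})}{X(\lambda_{\widetilde A_0})}\prod_{i=1}^{n} r_{c(e_i)},
\]
where $\lambda_{\widetilde A_0} = \emptyset$ and so $X(\lambda_{\widetilde A_0})=1$ by the normalization of $X$. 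Likewise, the probability of $\gamma$ under the central chain $(A_n)_{n\ge 0}$ telescopes to $\varphi(s^{(k)}_{\lambda_{A_n}})/\varphi(s^{(k)}_{\lambda_{A_0}}) = \varphi(s^{(k)}_{\lambda_{A_n}})$.

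The key step is then to identify these two quantities. Since multiplication by $s_{R_a}$ on the alcove side is by \cite{BBTZ} the translation $t_{\Lambda_a}$, every color-$a$ edge in $\widetilde\gamma$ (with $a\ge 1$) corresponds to adjoining one copy of $R_a$ to the underlying $k$-bounded partition, while color-$0$ edges keep us within $\mathsf{B}$. Letting $p_a$ denote the number of color-$a$ edges in $\widetilde\gamma$, the alcove $A_n = p(\widetilde A_0,\ldots,\widetilde A_n) = \widetilde A_n + \sum_a p_a\Lambda_a$ corresponds to the $k$-bounded partition
\[
\lambda_{A_n} \;=\; R_1^{p_1}\sqcup\cdots\sqcup R_k^{p_k}\sqcup \lambda_{\widetilde A_n},
\]
with $\lambda_{\widetilde A_n}$ irreducible (since $\widetilde A_n\in\mathsf{B}$). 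This is exactly the unique decomposition provided by Corollary \ref{reducedCase}, so $\widetilde{\lambda_{A_n}} = \lambda_{\widetilde A_n}$ and $p_a^{\lambda_{A_n}} = p_a$.

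Applying the factorization \eqref{decompos} to $\lambda_{A_n}$ and using the fact that $\varphi$ restricted to irreducible partitions is the normalized left Perron--Frobenius eigenvector $X$ of $\Phi$ (as constructed in Theorem \ref{principalDomain}), we obtain
\[
\varphi(s^{(k)}_{\lambda_{A_n}}) \;=\; \Bigl(\prod_{a=1}^{k} r_a^{p_a}\Bigr)\,\varphi(s^{(k)}_{\lambda_{\widetilde A_n}}) \;=\; \Bigl(\prod_{i=1}^{n} r_{c(e_i)}\Bigr)\, X(\lambda_{\widetilde A_n}),
\]
where the last equality uses $\prod_a r_a^{p_a} = \prod_{i=1}^n r_{c(e_i)}$ (with the convention $r_0=1$). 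This is precisely $\widetilde{\mathbb{P}}(\widetilde\gamma)$, so the two probabilities coincide. Since $L$ and $M$ are mutually inverse bijections, this shows that the pushforward law of $(\widetilde A_n)$ by $L$ agrees with the law of $(A_n)$ on every cylinder set, completing the proof.

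The only real obstacle is checking carefully that each edge color in $\mathcal{M}_k$ does correspond to adjoining the right rectangle on the partition side; this is the geometric content of the identification $T_a = t_{\Lambda_a}$ of \cite{BBTZ} combined with the definitions in Section \ref{subsec_MatrixFI} (in particular Lemma \ref{stabilize_with_coeff} forcing $\alpha = \alpha_i$, $r=1$ on the type-$2$ edges), and once this dictionary is in place the probabilities match by pure algebra.
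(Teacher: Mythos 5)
Your proposal is correct and follows essentially the same route as the paper's (very terse) proof: telescope the transition probabilities of $(\widetilde A_n)$ to get $\widetilde{\mathbb{P}}(\widetilde\gamma)=r^{\mathrm{wt}(\widetilde\gamma)}X(\lambda_{\widetilde A_n})$, then match this against $\mathbb{P}(L(\widetilde\gamma))=\varphi(s^{(k)}_{\lambda_{A_n}})$ via the rectangle factorization of Corollary \ref{reducedCase} and the identification of $X$ with $\varphi$ on irreducibles. You spell out the intermediate steps (the decomposition $\lambda_{A_n}=R_1^{p_1}\sqcup\cdots\sqcup R_k^{p_k}\sqcup\lambda_{\widetilde A_n}$ and the use of $T_a=t_{\Lambda_a}$) that the paper leaves implicit, but the argument is the same.
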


\begin{proof}
Let $\gamma$ be a finite path on $\mathcal{M}_{k}$ of weight $\mathrm{wt}%
(\gamma)$ and ending at $\tilde{A}$. By the Markov kernel defined above,
\[
\widetilde{\mathbb{P}}(\gamma)=r^{_{\mathrm{wt}(\gamma)}}X(\lambda_{\tilde{A}%
}),
\]
where $r^{\mathrm{wt}(\gamma)}=r_{1}^{\beta_{1}}\dots r_{k}^{\beta_{k}}$ when
$\mathrm{wt}(\gamma)=\beta_{1}\Lambda_{1}+\dots+\beta_{k}\Lambda_{k}$, with
$\beta_{i}\in\mathbb{Z}_{\geq0}$. Since $L(\gamma)$ ends at $p(\gamma
)=A+\mathrm{wt}(\gamma)$ and $X(\lambda)=\varphi(s_{\lambda}^{(k)})$ for any
$\lambda\in\mathcal{P}_{\mathrm{irr}}$, we have
\[
\widetilde{\mathbb{P}}(\gamma)=\mathbb{P}(L(\gamma)).
\]

\end{proof}

\bigskip

Recall that for any $n\geq0$, $v_{n}$ is the center of the alcove $A_{n}$.
Denote by $x_{i}(n)=\langle v_{n},\alpha_{i}\rangle$ the position of $v_{n}$
along the direction $\Lambda_{i}$.

\begin{lemma}
As $n$ goes to infinity,
\[
\frac{1}{n}x_{i}(n)\longrightarrow r_{i}\sum_{\substack{e:A\rightarrow
A^{\prime}\in\mathcal{M}_{k}\\c(e)=i}}\widehat{X}(\lambda_{A})X(\lambda
_{A^{\prime}}).
\]

\end{lemma}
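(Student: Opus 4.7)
The plan is to reduce the computation of the drift along direction $\Lambda_i$ to an ergodic average for the finite-state Markov chain $(\widetilde{A}_n)_{n\geq 0}$ on $\mathsf{B}$. First, I would use the bijection $L:\Gamma_f(\mathcal{M}_k)\to\Gamma_f(\mathcal{A}_k)$, concretely the formula $p(\gamma)=\gamma(n)+\mathrm{wt}(\gamma)$, to write $A_n=\widetilde{A}_n+\mathrm{wt}(\gamma_n)$ and, taking centers, $v_n=\widetilde{v}_n+\mathrm{wt}(\gamma_n)$, where $\gamma_n$ is the first $n$ steps of $(\widetilde{A}_j)$ in $\mathcal{M}_k$. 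Pairing with $\alpha_i$ and using $\langle \Lambda_a,\alpha_i\rangle=\delta_{a,i}$ (with the convention $\Lambda_0=0$) splits $x_i(n)$ as $\langle\widetilde{v}_n,\alpha_i\rangle+\#\{j\leq n:c(e_j)=i\}$. Since $\widetilde{v}_n$ stays in the bounded region $\mathsf{B}$, the first term is uniformly $O(1)$ and vanishes after dividing by $n$, reducing the problem to the asymptotic frequency of color-$i$ edges.

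Second, I would identify the stationary distribution of $(\widetilde{A}_n)$ as $\pi(A):=X(\lambda_A)\widehat{X}(\lambda_A)$. The normalization $(X,\widehat{X})=1$ makes $\pi$ a probability measure on $\mathsf{B}$. After pulling out the factor $X(\lambda_{A'})$ coming from $\widetilde{\mathbb{P}}(A\to A')=W(A,A')X(\lambda_{A'})/X(\lambda_A)$ with $W(A,A')=\sum_{e:A\to A'}r_{c(e)}$, the balance equation $\sum_A \pi(A)\widetilde{\mathbb{P}}(A\to A')=\pi(A')$ reduces to
\[
\sum_{\lambda} \Phi(\lambda_{A'},\lambda)\widehat{X}(\lambda)=\widehat{X}(\lambda_{A'}),
\]
which is precisely the fact that $\widehat{X}$ is a right eigenvector of $\Phi$ for the eigenvalue $1$. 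Under the running assumption that $\Phi$ is irreducible (Proposition \ref{irredPhi}), the chain is irreducible on the finite set $\mathsf{B}$, so $\pi$ is its unique stationary distribution.

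Third, I would apply the Markov ergodic theorem on a finite irreducible state space to obtain, almost surely,
\[
\frac{1}{n}\#\{j\leq n: c(e_j)=i\}\longrightarrow \sum_A \pi(A)\sum_{\substack{e:A\to A'\\ c(e)=i}} \widetilde{\mathbb{P}}(A\xrightarrow{e}A')=r_i\sum_{\substack{e:A\to A'\in\mathcal{M}_k\\ c(e)=i}} \widehat{X}(\lambda_A)X(\lambda_{A'}),
\]
where the factor $X(\lambda_A)$ in $\pi(A)$ cancels with the factor $1/X(\lambda_A)$ in $\widetilde{\mathbb{P}}(A\xrightarrow{e}A')=r_iX(\lambda_{A'})/X(\lambda_A)$. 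Combined with the negligible contribution of $\langle\widetilde{v}_n,\alpha_i\rangle/n$, this gives the announced limit. The main bookkeeping obstacle is tracking the transpose conventions relating the paper's $\Phi$ (the matrix of multiplication by $s_{(1)}$ in the basis $\mathcal{I}$) and the weighted adjacency matrix of $\mathcal{M}_k$; once this is untangled, the argument is a textbook application of ergodic theory on a finite irreducible Markov chain.
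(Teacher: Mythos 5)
Your proof is correct and follows the paper's approach: reduce $\tfrac{1}{n}x_{i}(n)$ to the asymptotic frequency of colour-$i$ edges of the lifted chain on $\mathcal{M}_{k}$, identify the stationary distribution as $\pi(A)=X(\lambda_{A})\widehat{X}(\lambda_{A})$, and invoke the ergodic theorem for finite irreducible Markov chains. The only stylistic difference is the first reduction, where you read off $x_{i}(n)=\langle\widetilde{v}_{n},\alpha_{i}\rangle+\#\{j\leq n:c(e_{j})=i\}$ directly from $v_{n}=\widetilde{v}_{n}+\mathrm{wt}(\gamma_{n})$ and $\langle\Lambda_{a},\alpha_{i}\rangle=\delta_{a,i}$ and then discard the bounded term coming from $\mathsf{B}$, whereas the paper tracks $y_{i}(n)=\lfloor x_{i}(n)\rfloor$ and argues each unit increment is a crossing of a hyperplane $H_{\alpha_{i},r}$ — both count exactly the same quantity, so this is just cleaner bookkeeping rather than a different route.
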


\begin{proof}
Set $y_{i}(n)=\lfloor x_{i}(n)\rfloor$. Then,
\[
\lim_{n\rightarrow\infty}\frac{1}{n}x_{i}(n)=\lim_{n\rightarrow\infty}\frac
{1}{n}y_{i}(n).
\]
Let $N\geq1$ and $0\leq n\leq N$. Suppose that $y_{i}(n+1)-y_{i}(n)=1$. Since
$x_{i}(n)-y_{i}(n)>0$, we have $\langle v_{n},\Lambda_{i}\rangle
<y_{i}(n+1)<\langle v_{n+1},\Lambda_{i}\rangle$. Hence, the affine hyperplane
$H_{\alpha_{i},y_{i}(n+1)}$ separates the alcoves $A_{n}$ and $A_{n+1}$, and
thus $A_{n+1}=s_{\alpha_{i},y_{i}(n+1)}A_{n}$. Hence, $y_{i}(n+1)-y_{i}(n)=1$
if and only if the $n$-th edge of the path $M(A_{1},\ldots,A_{N})$ is colored
by $i$. Hence $y_{i}(N)$ is the number of arrows colored by $i$ in the
trajectory $M(A_{1},\ldots,A_{N})$. Since $M(A_{1},\ldots,A_{N})$ is an
irreducible random walk on $\mathcal{M}_{k}$, the ergodic theorem for random
walks on finite spaces yields that for each edge $e_{0}\in\mathcal{M}_{k}$
from $A$ to $A^{\prime}$,
\[
\frac{1}{N}\mathrm{card}(\{e\in M(A=A_{1},\ldots,A_{N}=A^{\prime}%
),e=e_{0}\}\xrightarrow[n\rightarrow \infty]{a.s}m(A)\widetilde{\mathbb{P}%
}(A\xrightarrow{e}A^{\prime}),
\]
where $m$ is the invariant measure on $\mathcal{M}_{k}$ with respect to
$\widetilde{\mathbb{P}}$. We have%
\[
\widetilde{\mathbb{P}}(\widetilde{A}_{n}=A^{\prime}\mid\widetilde{A}%
_{n-1}=A^{\prime})=\sum_{e\text{ from }A\text{ to }A^{\prime}}r_{c(e)}%
\frac{X(\lambda_{A^{\prime}})}{X(\lambda_{A})}=\Phi_{\lambda_{A}%
,\lambda_{A^{\prime}}}\frac{X(\lambda_{A^{\prime}})}{X(\lambda_{A})},
\]
thus the corresponding invariant measure is the unique vector $m$ such that
$\sum_{A\in\mathcal{M}_{k}}m(A)=1$ and
\[
\sum_{A}\Phi_{\lambda_{A},\lambda_{A^{\prime}}}\frac{X(\lambda_{A^{\prime}}%
)}{X(\lambda_{A})}m(A)=m(A^{\prime}).
\]
We get that $\left(  \frac{m(A)}{X(\lambda_{A})},A\in\mathcal{M}_{k}\right)  $
is a left eigenvector of $\Phi$ with eigenvalue $1$, thus is proportional to
$(\widehat{X}(\lambda_{A}),A\in\mathcal{M}_{k})$. In fact it is equal to
$\widehat{X}(\lambda_{A})$ since $m$ is a measure and $(X,\widehat{X})=1$ so
$m(A)=X(\lambda_{A})\widehat{X}(\lambda_{A}).$ This gives
\[
\frac{1}{N}\mathrm{card}(\{e\in M(v_{1},\ldots,v_{N}),e=e_{0}%
\}\xrightarrow[n\rightarrow \infty]{a.s}X(\lambda_{A})\widehat{X}(\lambda
_{A})r_{c(e_{0})}\frac{X(\lambda_{A^{\prime}})}{X(\lambda_{A})}=\widehat
{X}(\lambda_{A})X(\lambda_{A^{\prime}})r_{c(e_{0})}.
\]
Since $y_{i}(N)$ is the number of arrows colored by $i$ in the trajectory
$M(A_{1},\ldots,A_{N})$ we obtain%
\[
\frac{1}{N}y_{i}(N)\xrightarrow[n\rightarrow \infty]{a.s}r_{i}\sum
_{\substack{e:A\rightarrow A^{\prime}\\c(e)=i}}\widehat{X}(\lambda
_{A})X(\lambda_{A^{\prime}}).
\]

\end{proof}

\bigskip

For any alcove $A$, set $\overline{A}=\mathsf{I}\Omega(A)$.

\begin{theorem}
\label{result_drift} \ 

\begin{enumerate}
\item As $n$ goes to infinity, the normalized random walk $\left(  \frac{1}%
{n}v_{n}\right)  _{n\geq1}$ converges almost surely to a vector $v_{\varphi
}\in\mathbb{R}^{k}$.

\item Moreover for any $i=1,\ldots,k$ the coordinate of $v_{\varphi}$ on
$\Lambda_{i}$ satisfies
\[
v_{\varphi}(i)=\varphi\left(  \frac{s_{R_{i}}}{\sum_{A\in\mathsf{B}}%
s_{\lambda_{A}}^{(k)}s_{\lambda_{\overline{A}}}^{(k)}}\sum
_{\substack{e:A\rightarrow A^{\prime}\\c(e)=i}}s_{\lambda_{\overline{A}}%
}^{(k)}s_{\lambda_{A^{\prime}}}^{(k)}\right)
\]
which is a rational function on $\mathbb{R}^{k}$.
\end{enumerate}
\end{theorem}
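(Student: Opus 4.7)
The plan is as follows. Part (1) follows almost immediately from the preceding lemma: that lemma establishes, for each $i = 1,\ldots,k$, the almost sure convergence of $\tfrac{1}{n}x_i(n) = \tfrac{1}{n}\langle v_n,\alpha_i\rangle$ to an explicit limit. Since $(\alpha_i)_{i=1,\ldots,k}$ is a basis of $\mathbb{R}^k$ dual to the fundamental weights $(\Lambda_i)$, the vector $\tfrac{1}{n}v_n$ is reconstructed from its coordinates $\tfrac{1}{n}x_i(n)$ and so converges almost surely to some $v_\varphi \in \mathbb{R}^k$ whose coordinate on $\Lambda_i$ is exactly $v_\varphi(i) = r_i\sum_{e\colon A\to A',\,c(e)=i} \widehat{X}(\lambda_A)X(\lambda_{A'})$.

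The whole content of part (2) is therefore to identify $\widehat{X}$ in terms of $\varphi$ using the symmetry of $\Phi$. By construction $X$ is the left Perron--Frobenius eigenvector of $\Phi$, and $X(\lambda_A) = \varphi(s_{\lambda_A}^{(k)})$. I will apply Proposition \ref{good_Symmetry}, which gives $(\mathsf{I}\Omega)\Phi(\mathsf{I}\Omega) = \Phi^t$ with $\mathsf{I}\Omega$ an involutive permutation matrix. Transposing $X\Phi = X$ yields $\Phi^t X^t = X^t$; substituting the symmetry relation, $\Phi\bigl((\mathsf{I}\Omega)X^t\bigr) = (\mathsf{I}\Omega)X^t$, so $(\mathsf{I}\Omega)X^t$ is a positive right eigenvector of $\Phi$ for the eigenvalue $1$. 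Since $\Phi$ is irreducible (Proposition \ref{irredPhi}), the Perron--Frobenius eigenspace is one-dimensional and thus $\widehat{X} = c\,(\mathsf{I}\Omega)X^t$ for some $c > 0$, i.e.\ $\widehat{X}(\lambda_A) = c\,\varphi(s_{\lambda_{\mathsf{I}\Omega(A)}}^{(k)})$. The normalization $(X,\widehat{X}) = 1$ then forces
\[
c = \frac{1}{\sum_{A\in\mathsf{B}} \varphi\bigl(s_{\lambda_A}^{(k)} s_{\lambda_{\mathsf{I}\Omega(A)}}^{(k)}\bigr)} = \frac{1}{\varphi\Bigl(\sum_{A\in\mathsf{B}} s_{\lambda_A}^{(k)} s_{\lambda_{\mathsf{I}\Omega(A)}}^{(k)}\Bigr)}
\]
by multiplicativity of $\varphi$.

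Substituting these identifications, together with $r_i = \varphi(s_{R_i})$, into the expression $v_\varphi(i) = r_i\sum_{e\colon A\to A',\,c(e)=i} \widehat{X}(\lambda_A)X(\lambda_{A'})$ and again using multiplicativity of $\varphi$ yields
\[
v_\varphi(i) = \varphi\!\left( \frac{s_{R_i}}{\sum_{A\in\mathsf{B}} s_{\lambda_A}^{(k)} s_{\lambda_{\mathsf{I}\Omega(A)}}^{(k)}} \sum_{\substack{e\colon A\to A'\\ c(e)=i}} s_{\lambda_{\mathsf{I}\Omega(A)}}^{(k)} s_{\lambda_{A'}}^{(k)} \right),
\]
which is the claimed formula. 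Rationality on $\mathbb{R}^k$ follows from Corollary after Proposition \ref{Prop_InvSim}, which already shows that each $\varphi(s_\lambda^{(k)})$ is a rational function of $\vec r$; the denominator is a sum with positive coefficients of products $s_{\lambda_A}^{(k)} s_{\lambda_{\mathsf{I}\Omega(A)}}^{(k)}$, in particular positive on the locus where $\Phi$ is irreducible.

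The main obstacle I anticipate is the eigenvector-identification step, specifically matching the combinatorial action of $\mathsf{I}\Omega$ on the basis $\mathcal{P}_{\mathrm{irr}}$ with the linear-algebraic manipulation of permutation matrices (one must be careful that $\widehat{X}(\lambda_A) = c\,X(\lambda_{\mathsf{I}\Omega(A)})$ rather than the opposite convention, using that $\mathsf{I}\Omega$ is an involution). The remaining issue is the reduction from the general case $\vec r\in\mathcal{S}_k$ to the irreducible one: as noted before the theorem, this is handled by continuity, since both sides of the drift formula are continuous in $\vec r$ and the set where $\Phi$ is irreducible is dense in $\mathcal{S}_k$ by Proposition \ref{irredPhi}.
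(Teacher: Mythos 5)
Your proof is correct and follows essentially the same route as the paper: part (1) and the raw drift formula come from the preceding lemma, the key step is identifying $\widehat{X}(\lambda_A)=c\,X(\lambda_{\mathsf{I}\Omega(A)})$ via Proposition~\ref{good_Symmetry}, and rationality follows because $\varphi(s_{(1)})=1$ is fixed so each $\varphi(s_\kappa^{(k)})$ is rational in $\vec r$ (the paper invokes Proposition~\ref{Prop_rational} with $t=1$, you reach the same point through the Corollary after Proposition~\ref{Prop_InvSim}, which is its source). The only difference is cosmetic: you spell out the linear-algebraic derivation of $\widehat{X}=c\,(\mathsf{I}\Omega)X^t$ from $(\mathsf{I}\Omega)\Phi(\mathsf{I}\Omega)=\Phi^t$ and Perron--Frobenius uniqueness, where the paper states the identification directly as a consequence of Proposition~\ref{good_Symmetry}.
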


\begin{proof}
The previous lemma proves the first part of the theorem. It also shows that
\[
v_{\varphi}(i)=r_{i}\sum_{\substack{e:A\rightarrow A^{\prime}\\c(e)=i}%
}\widehat{X}(\lambda_{A})X(\lambda_{A^{\prime}}).
\]
By Proposition \ref{good_Symmetry}, the coordinates of the vector $\widehat
{X}$ are such that $\widehat{X}(\lambda_{A})=\frac{1}{\nabla}X(\lambda
_{\mathsf{I}\Omega(A)})$ for any $A\in\mathsf{B}$ where $\nabla=\sum
_{A\in\mathsf{B}}X(\lambda_{A})X(\lambda_{\overline{A}})$. Since the
coordinates of $X$ are the $\varphi(s_{\lambda}^{(k)})$ with $\lambda$
irreducible, we can write%
\[
v_{\varphi}(i)=\frac{1}{\nabla}r_{i}\sum_{\substack{e:A\rightarrow A^{\prime
}\\c(e)=i}}\varphi(s_{\lambda_{\overline{A}}}^{(k)}s_{\lambda_{A^{\prime}}%
}^{(k)})=\varphi\left(  \frac{s_{R_{i}}}{\sum_{A\in\mathsf{B}}s_{\lambda_{A}%
}^{(k)}s_{\lambda_{\overline{A}}}^{(k)}}\sum_{\substack{e:A\rightarrow
A^{\prime}\\c(e)=i}}s_{\lambda_{\overline{A}}}^{(k)}s_{\lambda_{A^{\prime}}%
}^{(k)}\right)  .
\]
Proposition \ref{Prop_rational} below applied with $t=1$ will imply that
$v_{\varphi}(i)$ is indeed a rational function in $(r_{1},\ldots,r_{k})$ and
so $v_{\varphi}$ is rational in $(r_{1},\ldots,r_{k})$.
\end{proof}

\section{Some consequences}

\subsection{Limit formulas in the case $\varphi(\Delta)=0$}

\label{SubsecLim}For any $k$-irreducible partition $\kappa$, we know by
\S \ \ref{subsec_primitiveel} that there exists a polynomial $P_{\kappa}%
\in\mathbb{A}[T]$ such that
\begin{equation}
s_{\kappa}^{(k)}=\frac{P_{\kappa}(s_{1})}{\Delta} \label{FormPrim}%
\end{equation}
here $\Delta\in\mathbb{A}$ is the determinant of the transition matrix between
the bases $\{s_{(1)}^{a}\mid0\leq a\leq k!-1\}$ and $\mathcal{I}=\{s_{\kappa
}^{(k)}\mid\kappa\in\mathcal{P}_{\mathrm{irr}}\}$. For any morphism
$\varphi:\Lambda_{(k)}\rightarrow\mathbb{R}$ nonnegative on the $k$-Schur
functions and such that $\varphi(\Delta)\neq0$ we thus get%
\begin{equation}
\varphi(s_{\kappa}^{(k)})=\frac{\varphi(P_{\kappa})(\varphi(s_{1}))}%
{\varphi(\Delta)}. \label{FormSpe}%
\end{equation}
Moreover, $\varphi(P_{\kappa})$ and $\varphi(\Delta)$ are directly determined
by the values $r_{a}=\varphi(s_{R_{a}}),a=1,\ldots,k$ since $P_{\kappa}$ and
$\Delta$ belong to the subalgebra $\mathbb{A}$. Also $(\varphi(s_{1})$ is the
spectral radius of the matrix $\Phi=\varphi(\boldsymbol{\Phi})$.

Now assume that the morphism $\varphi$ associated to $\vec{r}$ is such that
$\varphi(\Delta)=0$. Then, we can consider a sequence $(\vec{r}_{n})_{n\geq0}$
in $U=\mathbb{R}_{\geq0}^{k}$ such that each morphism $\varphi_{n}:=f(\vec
{r}_{n})$ satisfies $\varphi_{n}(\Delta)\neq0$.\ By continuity of the map $f$
we then get for any $k$-irreducible partition $\kappa$%
\[
\varphi(s_{\kappa}^{(k)})=\lim_{n\rightarrow\infty}\frac{\varphi_{n}%
(P_{\kappa})(\varphi_{n}(s_{1}))}{\varphi_{n}(\Delta)}%
\]
so that the formulas (\ref{FormSpe}) extends by continuity. In particular we
then have $\varphi(P_{\kappa})(s_{1})=0$. Alternatively, one can consider for
any nonnegative real $s$, the sets $\overline{V}_{s}=\{\vec{h}\in\overline
{V}\mid h_{1}=s\}$ and $\overline{U}_{s}=g(\overline{V}_{s})$. For any
$\vec{r}\in\overline{U}_{s}$ such that $\Delta(\vec{r})=\varphi(\Delta)\neq0$
write $\widetilde{P}_{\kappa}^{s}(\vec{r})=\varphi(P_{\kappa})(s)$. We also
set $\varphi(s_{\kappa}^{(k)})=s_{\kappa}^{(k)}(\vec{r})$.

\begin{proposition}
\label{Prop_rational}For each irreducible $k$-bounded partition $\kappa$, the
function $\vec{r}\longmapsto s_{\kappa}^{(k)}(\vec{r})$ is continuous on
$\overline{U}_{s}$ and rational. We have%
\begin{equation}
s_{\kappa}^{(k)}(\vec{r})=\frac{\widetilde{P}_{\kappa}^{s}(\vec{r})}%
{\Delta(\vec{r})}. \label{S_kappa_sfixed}%
\end{equation}
In particular, the coordinates of $f$ are continuous rational functions on
each $\overline{U}_{s}$.
\end{proposition}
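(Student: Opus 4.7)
The plan is to reduce everything to the primitive element identity from Section \ref{subsec_primitiveel}, which provides $\Delta\in\mathbb{A}$ and $P_{\kappa}\in\mathbb{A}[T]$ with $\Delta\cdot s_{\kappa}^{(k)}=P_{\kappa}(s_{(1)})$ as an equality inside $\Lambda_{(k)}$ itself (both sides already live there, so no localization is needed). First I would apply the morphism $\varphi=\varphi_{\vec{r}}$ attached to an arbitrary $\vec{r}\in\overline{U}_{s}$. Since $\varphi$ simply substitutes $s_{R_{a}}=r_{a}$ on elements of $\mathbb{A}$, and since $\varphi(s_{(1)})=s$ on $\overline{U}_{s}=g(\overline{V}_{s})$ by the very definition of $\overline{V}_{s}$, this yields the polynomial identity
\[
\Delta(\vec{r})\,\varphi(s_{\kappa}^{(k)}) \;=\; \varphi(P_{\kappa})(s) \;=\; \widetilde{P}_{\kappa}^{s}(\vec{r}).
\]
On $\{\vec{r}\in\overline{U}_{s}:\Delta(\vec{r})\neq 0\}$ this already gives formula (\ref{S_kappa_sfixed}).

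Next I would verify that $\vec{r}\mapsto s_{\kappa}^{(k)}(\vec{r}):=\varphi_{\vec{r}}(s_{\kappa}^{(k)})$ is continuous on the whole of $\overline{U}_{s}$. On the open part where $\Phi$ is irreducible (controlled by Proposition \ref{irredPhi}) the quantity $s_{\kappa}^{(k)}(\vec{r})$ is precisely the $\kappa$-coordinate of the normalized left Perron--Frobenius eigenvector of $\Phi$ and therefore varies continuously with $\vec{r}$. At the remaining boundary points I would re-run the singleton argument from the proof of Theorem \ref{Th_UVhomeo}: the set of limit values of $s_{\kappa}^{(k)}(\vec{r}_{n})$ along any sequence $\vec{r}_{n}\to\vec{r}_{0}$ is a connected subset of the finite variety $\mathcal{R}_{\vec{r}_{0}}$ supplied by Proposition \ref{Prop_Variety}, hence reduces to a singleton, so the extension is unambiguous.

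With continuity in hand, the identity $\Delta(\vec{r})\,s_{\kappa}^{(k)}(\vec{r})=\widetilde{P}_{\kappa}^{s}(\vec{r})$ extends by continuity from the dense open locus $\{\Delta\neq 0\}\cap\overline{U}_{s}$ to all of $\overline{U}_{s}$; on the dense open part this gives (\ref{S_kappa_sfixed}) and thereby the rational form, while at the zeros of $\Delta$ one automatically has $\widetilde{P}_{\kappa}^{s}(\vec{r})=0$ as already remarked in Section \ref{SubsecLim}. For the final assertion on $f$, I would apply the identical scheme to each $h_{i}\in\mathbb{L}=\mathbb{K}[s_{(1)}]$: the primitive element theorem furnishes $Q_{i}\in\mathbb{A}[T]$ with $\Delta\cdot h_{i}=Q_{i}(s_{(1)})$, whence $\varphi(h_{i})=\widetilde{Q}_{i}^{s}(\vec{r})/\Delta(\vec{r})$ is rational on $\overline{U}_{s}$. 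The main delicate point is precisely this continuity extension at points where $\Phi$ loses irreducibility or $\Delta$ vanishes; it is exactly what Theorem \ref{Th_UVhomeo} combined with the finiteness of $\mathcal{R}_{\vec{r}}$ has been set up to deliver.
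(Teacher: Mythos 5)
Your argument is correct and follows essentially the same route the paper itself takes (the paper's ``proof'' of Proposition~\ref{Prop_rational} is really just the paragraph preceding it, whose sketch you have filled in): push the primitive-element identity $\Delta\cdot s_{\kappa}^{(k)}=P_{\kappa}(s_{(1)})$ from \S\ref{subsec_primitiveel} through $\varphi_{\vec r}$, use $\varphi(s_{(1)})=s$ on $\overline{U}_s$, and then handle the vanishing locus of $\Delta$ by continuity. Two small remarks on your continuity step. First, re-running the singleton argument is more work than needed: Theorem~\ref{Th_UVhomeo} already gives continuity of $f$ on all of $\overline{U}$, and since $s_{\kappa}^{(k)}$ is a fixed polynomial in $h_1,\ldots,h_k$, continuity of $\vec r\mapsto s_{\kappa}^{(k)}(\vec r)=s_{\kappa}^{(k)}(f(\vec r))$ follows immediately by composition, with no further appeal to $\mathcal{R}_{\vec r_0}$. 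Second, as written the singleton argument is slightly misapplied: the set of limiting values of $s_{\kappa}^{(k)}(\vec r_n)$ lies in $\mathbb{R}$, whereas $\mathcal{R}_{\vec r_0}\subset\mathbb{R}^k$ lives in $\vec h$-coordinates; the connected-and-finite-hence-singleton reasoning applies to $f(\vec r_n)$, from which continuity of $s_{\kappa}^{(k)}(\vec r_n)$ is then inherited. These are cosmetic; the rest is sound, including the observation that $\widetilde P_{\kappa}^{s}$ vanishes along with $\Delta$ so the rational expression extends continuously, and the reduction of the final assertion about $f$ to the $\mathbb{A}$-module identity $\Lambda_{(k)}=\tfrac{1}{\Delta}\mathbb{A}[s_{(1)}]$, giving $\Delta\cdot h_i=Q_i(s_{(1)})$ with $Q_i\in\mathbb{A}[T]$.
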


\subsection{The minimal boundary of $\mathcal{B}^{(3)}$}

For $k=3$, one can easily picture the domains $\overline{V}_{1}$. The
condition to get a positive morphism $\varphi$ indeed reduces to
$\varphi(s_{1})\geq0,\varphi(s_{2})=\varphi(h_{2})\geq0,\varphi(s_{3}%
)=\varphi(h_{3})\geq0,\varphi(s_{(1,1)})=\varphi(e_{2})\geq0,\varphi
(s_{(1,1,1)})=\varphi(e_{3})\geq0,\varphi(s_{(2,1)})\geq0,\varphi
(s_{(2,1,1)}^{(3)})\geq0$ and $\varphi(s_{(2,2)})\geq0$. We get moreover by a
simple computation
\[
s_{(2,1,1)}^{(3)}=s_{(2)}s_{(1,1)}%
\]
thus $\varphi(s_{(2,1,1)}^{(3)})\geq0$ does not add any new constraint. We
also have $\varphi(h_{1})=\varphi(s_{1})=1$ and the Jacobi-Trudi relations
$e_{2}=h_{1}^{2}-h_{2},e_{3}=h_{1}^{3}+h_{3}-2h_{2}h_{1},s_{(2,1)}=h_{2}%
h_{1}-h_{3}$ and $s_{(2,2)}=h_{2}^{2}-h_{3}h_{1}$. Now by using that
$\varphi(s_{1})=1$ one can see that the previous inequalities are equivalent
to%
\begin{equation}
h_{1}=1,\ 0\leq h_{2}\leq1,\ 0\leq h_{3}\leq h_{2}^{2}\text{ and }2h_{2}%
-h_{3}\leq1. \label{condi3}%
\end{equation}
By setting $x=\varphi(h_{2})$ and $y=\varphi(h_{3})$. This gives the domain
$\overline{V}_{1}=\partial\mathcal{H}^{+}(\mathcal{B}_{3})$ delimited in the
picture below by the $x$ abscissa, the blue line and the red parabola.

\begin{figure}
\begin{center}
\includegraphics[
width=7.479500cm,
height=5.4191cm,
width=7.4795cm]{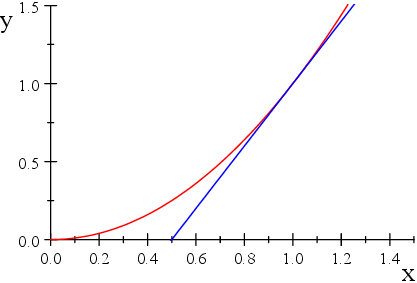}%

\end{center}
\caption{Region $\overline{V}_{1}$ in $x=h_{2}$ and $y=h_{3}$ coordinates delimited by the three curves $h_{3}=0$, $s_{(2,2)}=0$ and $e_{3}=0$.}
\end{figure}

\begin{remark}
\label{rem_badRest}If we consider the points of $\overline{V}_{1}%
=\partial\mathcal{H}^{+}(\mathcal{B}_{3})$ such that $h_{3}=0$, we get the
domain $\{(1,h_{2},0)\mid h_{2}\in\lbrack0,\frac{1}{2}])$.\ From example
\ref{Ex_k=2}, one sees that its projection in $\mathbb{R}^{2}$ is only
strictly contained in $\partial\mathcal{H}^{+}(\mathcal{B}_{2})=\{(1,h_{2}%
)\mid h_{2}\in\lbrack0,1]\}$ (see \S \ \ref{Subsec_PojLim}).
\end{remark}

\subsection{Minimal boundary of $\mathcal{B}_{k}$}

By homogeneity of the Schur functions, one gets that for any $\vec{r}%
=(r_{1},r_{2},\ldots,r_{k})\in\mathbb{R}_{\geq0}^{k}$ and any positive real
$t$%
\begin{equation}
f(t^{k}r_{1},t^{2(k-1)}r_{2},\ldots,t^{k}r_{k})=(th_{1},\ldots,t^{k}h_{k}).
\label{Rela}%
\end{equation}
Also with the notation of \S \ \ref{SubsecLim}, we obtain that $\partial
\mathcal{H}^{+}(\mathcal{B}_{k})=\overline{V}_{1}$ is homeomorphic to
$\overline{U}_{1}$. It follows from (\ref{Rela}) that for each nonnegative
real $s$, the sets $\overline{U}_{s}$ and $\overline{V}_{s}$ are completely
determined by $\overline{U}_{1}$ and $\overline{V}_{1}$, respectively. Also,
we can associate to any element $\vec{r}\in\mathbb{R}_{\geq0}^{k}$ the element
in $\partial\mathcal{H}^{+}(\mathcal{B}_{k})$ obtained by computing $\vec
{h}=f(\vec{r})$ and next renormalizing it according to (\ref{Rela}) so that
its first coordinate becomes equal to $1$. We also have the following
description of the minimal boundary:

\begin{proposition}
$\partial\mathcal{H}^{+}(\mathcal{B}_{k})$ is homeomorphic to $\mathcal{S}%
_{k}=\{(r_{1},\ldots,r_{k})\in\mathbb{R}_{\geq0}^{k}\mid r_{1}+\cdots
+r_{k}=1\}$.
\end{proposition}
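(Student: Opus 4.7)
The strategy is to leverage the homogeneity relation~\eqref{Rela} to identify $\partial\mathcal{H}^{+}(\mathcal{B}_{k}) = \overline{V}_{1}$ with a section of a natural scaling action on $\overline{U}\setminus\{\vec 0\}$. By Theorem~\ref{Th_UVhomeo}, $f : \overline U \to \overline V$ is a homeomorphism, and \eqref{Rela} expresses the fact that $f$ is equivariant for the two free $\mathbb{R}_{>0}$-actions
\[
t \star (r_{1},\dots,r_{k}) = (t^{|R_{1}|} r_{1},\dots, t^{|R_{k}|} r_{k}), \qquad t \bullet (h_{1},\dots,h_{k}) = (t h_{1}, t^{2} h_{2}, \dots, t^{k} h_{k}),
\]
with $|R_{a}| = a(k-a+1)$. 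I will verify that $\mathcal{S}_{k}\subset\overline U$ and $\overline V_{1}\subset\overline V$ are respectively sections for $\star$ and $\bullet$, and then transport the identification through $f$.

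For the $\star$-action, the function $\phi_{\vec r}(t) := \sum_{a} t^{|R_{a}|} r_{a}$ is continuous, zero at $0$, tends to $\infty$, and is strictly increasing on $(0,\infty)$ whenever some $r_{a_{0}} > 0$ (the term $t^{|R_{a_{0}}|} r_{a_{0}}$ is strictly increasing, all others nondecreasing). Hence a unique $t^{*}(\vec r) > 0$ satisfies $\phi_{\vec r}(t^{*}) = 1$, so $\mathcal{S}_{k}$ is a section for $\star$, and continuity of $t^{*}$ follows from the implicit function theorem since $\phi'_{\vec r}(t^{*}) > 0$. For the $\bullet$-action, the analogous fact reduces to showing that $h_{1} > 0$ for every $\vec h \in \overline V \setminus \{\vec 0\}$, for then $t = 1/h_{1}$ is the unique scaling landing in $\overline V_{1}$.

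I expect this $h_{1}>0$ lemma to be the main technical point. It amounts to: a morphism $\varphi$ nonnegative on the $k$-Schur basis with $\varphi(s_{(1)}) = 0$ must satisfy $\varphi(s_{\lambda}^{(k)}) = 0$ for every nonempty $\lambda\in\mathcal B_{k}$, in particular $\vec r = \vec 0$. I would prove this by iterating the Pieri rule~\eqref{PieriKSchur}: one obtains $s_{(1)}^{n} = \sum_{|\lambda|=n} c_{\lambda}\, s_{\lambda}^{(k)}$, where $c_{\lambda}$ counts saturated chains from $\emptyset$ to $\lambda$ in $\mathcal B_{k}$. Since every $k$-bounded partition admits such a chain (by induction on $|\lambda|$ using Lemma~\ref{conditionArrow}, or equivalently via reduced decompositions of the associated affine Grassmannian element), each $c_{\lambda} > 0$. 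Then $\varphi(s_{(1)})^{n} = 0$ combined with $\varphi(s_{\lambda}^{(k)}) \geq 0$ forces the vanishing, as required.

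Finally, define $\Psi : \mathcal S_{k} \to \overline V_{1}$ by $\Psi(\vec r) = (1/h_{1}) \bullet f(\vec r)$ with $\vec h := f(\vec r)$, and set $\Psi^{-1}(\vec h) := t^{*}(g(\vec h)) \star g(\vec h)$. A direct computation using the equivariance~\eqref{Rela} confirms $\Psi\circ\Psi^{-1} = \mathrm{id}$ and $\Psi^{-1}\circ\Psi = \mathrm{id}$: the scaling factor introduced on one side is exactly cancelled on the other by uniqueness of the section point in each orbit. Continuity of $\Psi$ follows from that of $f$ (Theorem~\ref{Th_UVhomeo}) together with continuity and positivity of $h_{1}$ on the compact set $f(\mathcal S_{k})$; continuity of $\Psi^{-1}$ follows from continuity of $g = f^{-1}$ and of $t^{*}$. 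This yields the claimed homeomorphism $\partial\mathcal H^{+}(\mathcal B_{k}) \cong \mathcal S_{k}$.
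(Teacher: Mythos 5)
Your proposal is correct and takes essentially the same route as the paper: both exploit the homogeneity relation~\eqref{Rela} to identify $\mathcal{S}_k$ and $\overline V_1$ as sections of the scaling action via the unique $t>0$ solving $\sum_a t^{|R_a|}r_a=1$, and transport through the homeomorphism $f$ of Theorem~\ref{Th_UVhomeo}. The one point you make explicit that the paper's proof uses only implicitly (in the surjectivity step, where $s=1/h_1$ is chosen) is the lemma that $h_1>0$ on $\overline V\setminus\{\vec 0\}$; your argument for it via iterating the Pieri rule and the existence of a saturated chain from $\emptyset$ to every vertex of $\mathcal{B}_k$ is correct.
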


\begin{proof}
We already know that $\partial\mathcal{H}^{+}(\mathcal{B}_{k})=\overline
{V}_{1}$ is homeomorphic to $\overline{U}_{1}$. Also any $\vec{r}%
=(r_{1},\ldots,r_{k})$ in $\overline{U}_{1}$ is nonzero. There thus exists a
unique positive real $t(\vec{r})$ such that $t^{k}r_{1}+t^{2(k-1)}r_{2}%
+\cdots+t^{k}r_{k}=1$.\ This follows from the fact that the polynomial
function $p(t)=t^{k}r_{1}+t^{2(k-1)}r_{2}+\cdots+t^{k}r_{k}$ strictly
increases on $\mathbb{R}_{>0}$ with $p(0)=0$ and $\mathrm{lim}_{t\rightarrow
+\infty}=+\infty$. Then, $t(\vec{r})$ is the unique real root of the
polynomial $p(T)-1$. The function $t:\vec{r}\rightarrow t(\vec{r})$ is
continuous on $\overline{U}_{1}$, therefore the function $u:\overline{U}%
_{1}\rightarrow\mathcal{S}_{k}$ defined by%
\[
u(r_{1},\ldots,r_{k})=(t(\vec{r})^{k}r_{1},t(\vec{r})^{2(k-1)}r_{2}%
,\ldots,t(\vec{r})^{k}r_{k})
\]
is well-defined and continuous. If $u(\vec{r})=u(\vec{R})$ with $\vec{r}$ and
$\vec{R}$ in $\partial\mathcal{H}^{+}(\mathcal{B}_{k})$, we have by applying
$f:$%
\[
f(u(\vec{r}))=(t(\vec{r})1,t(\vec{r})^{2}h_{2},\ldots,t(\vec{r})^{k}%
h_{k})=(t(\vec{R})1,t(\vec{R})^{2}h_{2},\ldots,t(\vec{R})^{k}h_{k}%
)=f(u(\vec{R})).
\]

Thus $t(\vec{r})=t(\vec{R})$ and we get $\vec{r}=\vec{R}$ so that $u$ is
injective.\ Now given any $\vec{r}=(r_{1},\ldots,r_{k})\in\mathcal{S}_{k}$,
there exists a positive real $s$ such that $\vec{r}_{s}=(s^{k}r_{1}%
,s^{2(k-1)}r_{2},\ldots,s^{k}r_{k})$ belongs to $\partial\mathcal{H}%
^{+}(\mathcal{B}_{k})$. We then have $u(\vec{r}_{s})=\vec{r}$.
\end{proof}

By observing that $\Lambda_{(k)}=\mathbb{R}[h_{1},\ldots,h_{k}]=\mathbb{R}%
[e_{1},\ldots,e_{k}]$ is in fact isomorphic to the algebra $\Lambda\lbrack
X_{1},\ldots,X_{k}]$ of symmetric polynomials in $k$ variables $X_{1}%
,\ldots,X_{k}$ over $\mathbb{R}$, we can also get informations on the values
taken by these variables for each point of $\partial\mathcal{H}^{+}%
(\mathcal{B}_{k}).$ For any $r=1,\ldots,k$, write for short $E_{r}%
=\frac{\widetilde{P}_{(1^{k})}^{1}(\vec{r})}{\Delta(\vec{r})}$.\ Each $E_{r}$
is a rational function on $\overline{U}_{1}$ which associates to an element of
$\overline{U}_{1}$ the value of $\varphi(e_{r})$ for the associated morphism
$\varphi$.

\begin{proposition}
For each $\vec{h}\in\partial\mathcal{H}^{+}(\mathcal{B}_{k})$, there exists a
unique $\vec{x}=(x_{1},\ldots,x_{k})\in\mathbb{C}^{k}$ such that the
associated morphism $\varphi:\Lambda_{(k)}\rightarrow\mathbb{R}$, nonnegative
on the $k$-Schur functions, coincides with the specialization%
\[
\varphi(P(X_{1},\ldots,X_{r}))=P(x_{1},\ldots,x_{r}).
\]
Moreover $\vec{x}$ is determined by the roots of the polynomial%
\[
\zeta(T)=\prod_{r=1}^{k}(1+Tx_{i})=1+t+\sum_{r=2}^{k-1}E_{r}T^{r-1}+r_{k}T^{k}%
\]
where $E_{1},\ldots,E_{r}$ are rational continuous functions on $\overline
{U}_{1}$.
\end{proposition}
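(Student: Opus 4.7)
The plan is to identify $\Lambda_{(k)}$ with the algebra $\Lambda_k$ of symmetric polynomials in $k$ variables and then recover $\vec{x}$ as the multiset of roots of an appropriate polynomial. The Newton--Jacobi--Trudi identities give $e_{1},\ldots,e_{k} \in \mathbb{R}[h_{1},\ldots,h_{k}] = \Lambda_{(k)}$, and (by induction on the degree) each $h_{r}$ lies in $\mathbb{R}[e_{1},\ldots,e_{k}]$, so $\Lambda_{(k)} = \mathbb{R}[e_{1},\ldots,e_{k}]$. Since the $e_{r}$'s are algebraically independent, the classical fundamental theorem of symmetric polynomials gives an $\mathbb{R}$-algebra isomorphism
\[
\iota : \Lambda_{(k)} \xrightarrow{\;\sim\;} \Lambda_k, \qquad e_{r} \longmapsto e_{r}(X_{1},\ldots,X_{k}).
\]
Via $\iota$, the morphism $\varphi$ becomes a morphism $\Lambda_k \to \mathbb{R}$, and is determined by the real numbers $E_{r} := \varphi(e_{r})$ for $r = 1,\ldots,k$.

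Next I would build $\vec{x}$ from the $E_{r}$'s. Form the polynomial
\[
\zeta(T) \;=\; 1 + E_{1}T + E_{2}T^{2} + \cdots + E_{k}T^{k} \in \mathbb{R}[T].
\]
Its reciprocal $T^{k}\zeta(1/T) = T^{k} + E_{1}T^{k-1} + \cdots + E_{k}$ splits over $\mathbb{C}$, so one can write $\zeta(T) = \prod_{i=1}^{k}(1 + T x_{i})$ with $x_{1},\ldots,x_{k} \in \mathbb{C}$ (taking the missing roots to be $0$ when $E_{k} = 0$), and Vieta's formulas give $E_{r} = e_{r}(x_{1},\ldots,x_{k})$. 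The multiset $\{x_{1},\ldots,x_{k}\}$ is uniquely determined by $\zeta$, hence by $\varphi$. For any symmetric $P \in \Lambda_k$, the fundamental theorem yields $P = Q(e_{1},\ldots,e_{k})$, whence
\[
\varphi(P) \;=\; Q(E_{1},\ldots,E_{k}) \;=\; Q\bigl(e_{1}(\vec{x}),\ldots,e_{k}(\vec{x})\bigr) \;=\; P(\vec{x}),
\]
so $\varphi$ is indeed evaluation at $\vec{x}$.

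Finally I would address the rationality statement. On $\overline{V}_{1} = \partial\mathcal{H}^{+}(\mathcal{B}_{k})$ we have $E_{1} = \varphi(h_{1}) = 1$. For $r = k$, $e_{k} = s_{R_{k}}$ and so $E_{k} = r_{k}$. For $1 < r < k$, the partition $(1^{r})$ has rank $r \leq k-1$, so by items 1--3 of the recollection of properties of $k$-Schur functions, $e_{r} = s_{(1^{r})} = s_{(1^{r})}^{(k)}$; moreover $(1^{r})$ contains only $r < k$ parts equal to $1$, so it is irreducible in the sense of Corollary~\ref{reducedCase}. Proposition~\ref{Prop_rational} applied with $s = 1$ then expresses $E_{r} = s_{(1^{r})}^{(k)}(\vec{r})$ as a continuous rational function of $\vec{r} \in \overline{U}_{1}$. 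Combining these gives the claimed formula for $\zeta$.

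The only slightly delicate point is the passage from a real specialization on $\Lambda_k$ to a complex-valued $k$-tuple: here we rely on $\mathbb{C}$ being algebraically closed to split $\zeta$, and on the symmetry of $P$ to make the evaluation $P(\vec{x})$ well-defined independently of any ordering of the roots. Uniqueness of $\vec{x}$ as a multiset is then just the statement that a polynomial is determined by its roots with multiplicity. Everything else in the proof is the bookkeeping indicated above.
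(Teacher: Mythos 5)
Your argument is correct and fills in exactly the proof the paper leaves implicit. The paper itself does not give a proof for this proposition; it merely observes, in the preceding paragraph, that $\Lambda_{(k)}=\mathbb{R}[h_1,\ldots,h_k]=\mathbb{R}[e_1,\ldots,e_k]$ is isomorphic to the algebra of symmetric polynomials in $k$ variables and defines $E_r$ as the rational function giving $\varphi(e_r)$, which is precisely the path you take. Your treatment of the individual points is sound: the equality $\mathbb{R}[h_1,\ldots,h_k]=\mathbb{R}[e_1,\ldots,e_k]$ via the recursion $h_n=\sum_{i=1}^n(-1)^{i-1}e_ih_{n-i}$; the factorization $\zeta(T)=\prod_i(1+Tx_i)$ over $\mathbb{C}$ (with $x_i=0$ filling in if $\deg\zeta<k$) and uniqueness of $\vec{x}$ as a multiset; the boundary cases $E_1=\varphi(s_{(1)})=1$ on $\overline{V}_1$ and $E_k=\varphi(s_{R_k})=r_k$; and the use of Proposition~\ref{Prop_rational} for $2\le r\le k-1$ after checking that $(1^r)$ is $k$-irreducible (indeed it has $r<k$ parts equal to $1$ and no parts of any other size, so no rectangle $R_a$ can be removed). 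You have also, implicitly, corrected several typos in the paper's statement ($T^{r-1}$ should read $T^r$, $1+t$ should read $1+T$, and $\widetilde{P}_{(1^k)}^1$ should read $\widetilde{P}_{(1^r)}^1$ in the definition of $E_r$).
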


\begin{example}
\ 

\begin{enumerate}
\item For $k=2$, we have $E_{1}=1$ and $E_{2}=r_{2}$ so that
\[
\zeta(T)=1+t+t^{2}r_{2}.
\]

\item For $k=3$, we get by resuming Example \ref{Examk=3} and using the
equality $\Xi(1)=1-2\left(  r_{1}+r_{3}\right)  -4r_{2}+\left(  r_{1}%
-r_{3}\right)  ^{2}=0$.%
\[
E_{1}=1\text{ and }E_{2}=\frac{1}{2}(r_{3}-r_{1}+1).
\]
This gives%
\[
\zeta(T)=1+T+\frac{1}{2}(r_{3}-r_{1}+1)T^{2}+r_{3}T^{3}.
\]
In that simple case we get in fact polynomial functions independent of $r_{2}$.
\end{enumerate}
\end{example}

\begin{remark}
The previous proposition does not mean that $\partial\mathcal{H}%
^{+}(\mathcal{B}_{k})$ is parametrized by the roots of all the polynomials
$\zeta(T).\ $This is only true for the roots of the polynomials $\zeta(T)$
corresponding to a point in $\overline{U}_{1}$.
\end{remark}

\subsection{Embedding and projective limit of the minimal boundaries}

\label{Subsec_PojLim}By Proposition \ref{Prop_positiveExp}, each morphism
$\varphi:\Lambda_{(k+1)}\rightarrow\mathbb{R}$ nonnegative on the
$(k+1)$-Schur functions yields by restriction to $\Lambda_{(k)}\subset
\Lambda_{(k+1)}$ a morphism nonnegative on the $k$-Schur functions. Here we
use the natural embedding $\Lambda_{(k)}\subset\Lambda_{(k+1)}$ corresponding
to the specialization $h_{k+1}=0$. Unfortunately, this will not give us a
projection of $\partial\mathcal{H}^{+}(\mathcal{B}_{k+1})$ on $\partial
\mathcal{H}^{+}(\mathcal{B}_{k})$ (see Remark \ref{rem_badRest}).
Nevertheless, we can define such a projection $\pi_{k}:\partial\mathcal{H}%
^{+}(\mathcal{B}_{k+1})\rightarrow\partial\mathcal{H}^{+}(\mathcal{B}_{k})$ by
setting%
\[
\pi_{k}(h_{1},\ldots,h_{k},h_{k+1})=\pi_{k}\circ f(r_{1},\ldots,r_{k}%
,r_{k+1})=f(r_{1},\ldots,r_{k})
\]
where $f(r_{1},\ldots,r_{k},r_{k+1})=(h_{1},\ldots,h_{k},h_{k+1})$. This
indeed yields a surjective map since for any $(h_{1}^{\prime},\ldots
,h_{k}^{\prime})\in\partial\mathcal{H}^{+}(\mathcal{B}_{k})$, we can set
$(h_{1}^{\prime},\ldots,h_{k}^{\prime})=\pi_{k}\circ f(r_{1}^{\prime}%
,\ldots,r_{k}^{\prime},0)$ where $(r_{1}^{\prime},\ldots,r_{k}^{\prime
})=g(h_{1}^{\prime},\ldots,h_{k}^{\prime})$.

\begin{proposition}
\ 

\begin{enumerate}
\item The map $\pi_{k}$ is continuous and surjective from $\partial
\mathcal{H}^{+}(\mathcal{B}_{k+1})$ to $\partial\mathcal{H}^{+}(\mathcal{B}%
_{k})$.

\item The inverse limit $\underleftarrow{\mathrm{lim}}\mathcal{B}_{k}$ is
homeomorphic to the minimal boundary of the ordinary Young lattice, that is to
the Thoma simplex.
\end{enumerate}
\end{proposition}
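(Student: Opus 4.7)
For part (1), the map $\pi_k$ decomposes as
\[
\pi_k \;=\; \mathrm{nrm}_k\circ f^{(k)}\circ p_k\circ g^{(k+1)},
\]
where $g^{(k+1)}$ and $f^{(k)}$ are the homeomorphisms of Theorem \ref{Th_UVhomeo}, $p_k:\mathbb{R}_{\geq 0}^{k+1}\to\mathbb{R}_{\geq 0}^{k}$ is the projection onto the first $k$ coordinates, and $\mathrm{nrm}_k$ is the renormalization provided by the homogeneity identity (\ref{Rela}) that sends $\overline{V}^{(k)}$ into $\overline{V}_1^{(k)}=\partial\mathcal{H}^+(\mathcal{B}_k)$. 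Each factor is continuous, hence $\pi_k$ is continuous. Surjectivity is witnessed by the explicit preimage described just before the proposition: given $\vec h\in\partial\mathcal{H}^+(\mathcal{B}_k)$, put $\vec r=g^{(k)}(\vec h)$, extend it by a zero to $(\vec r,0)\in\overline{U}^{(k+1)}$, and push through $f^{(k+1)}$ and $\mathrm{nrm}_{k+1}$; by construction the resulting point of $\partial\mathcal{H}^+(\mathcal{B}_{k+1})$ maps to $\vec h$ under $\pi_k$.

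For part (2), the strategy is to identify $\varprojlim_k\partial\mathcal{H}^+(\mathcal{B}_k)$ with $\mathrm{Mult}_1^+(\Lambda)$, the set of morphisms $\psi:\Lambda\to\mathbb{R}$ with $\psi(s_1)=1$ nonnegative on all ordinary Schur functions, which is classically homeomorphic to the Thoma simplex by Kerov--Vershik. In one direction, I would restrict any such $\psi$ to each subalgebra $\Lambda_{(k)}\subset\Lambda$: by Proposition \ref{Prop_positiveExp}(2) this restriction is $k$-Schur-nonnegative, and normalized by $\psi(s_1)=1$, so it lies in $\partial\mathcal{H}^+(\mathcal{B}_k)$. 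Conversely, given a compatible family $(\varphi_k)_k$, the filtration $\Lambda=\bigcup_k\Lambda_{(k)}$ patches the $\varphi_k$ into a single morphism $\psi:\Lambda\to\mathbb{R}$, whose Schur nonnegativity follows from the identity $s_\lambda^{(k)}=s_\lambda$ for any $k\geq|\lambda|$ together with $k$-Schur nonnegativity of $\varphi_k$. Continuity in both directions follows from the continuous dependence of Jacobi--Trudi polynomials and Perron--Frobenius eigenvectors on their arguments; since both spaces are compact Hausdorff, a continuous bijection is automatically a homeomorphism.

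The main obstacle is proving that the restriction map $\psi\mapsto(\psi|_{\Lambda_{(k)}})_k$ does produce a $\pi_k$-compatible sequence, and conversely that every compatible sequence arises this way. The subtle point is that $\pi_k$ is defined through truncation of rectangle-Schur values at the $k$-dependent rectangles $R_a^{(k)}=(k-a+1)^a$, not directly as restriction. I would resolve this by invoking the uniqueness statement of Theorem \ref{principalDomain}(1): a $k$-Schur-nonnegative morphism on $\Lambda_{(k)}$ is uniquely determined by the Perron--Frobenius data of $\Phi^{(k)}_{\vec r}$, so once I verify that the rectangle-Schur coordinates of $\pi_k(\varphi_{k+1})$ (after the homogeneity renormalization built into $\pi_k$) agree with those of $\varphi_{k+1}|_{\Lambda_{(k)}}$, the two morphisms must coincide, and similarly for patching a general compatible sequence into a Thoma morphism.
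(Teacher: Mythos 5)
The paper states this proposition without proof, so there is no argument to compare against directly; the task is to assess whether your construction actually works. Part (1) follows the paper's suggestion and your interpretation of $\pi_k$ as including the renormalization of (\ref{Rela}) is the only one under which $\pi_k$ lands in $\partial\mathcal{H}^+(\mathcal{B}_k)$. Be aware, though, that the explicit preimage you (and the paper) propose then needs a real check: if $\vec r'\in\overline{U}_1^{(k)}$, the Perron--Frobenius eigenvalue of $\Phi^{(k+1)}_{(\vec r',0)}$ is not $1$ in general, so $f^{(k+1)}(\vec r',0)$ lies off the slice $h_1=1$; after you renormalize it by the factor $t\neq 1$, pushing through $g^{(k+1)}$ multiplies the $a$-th rectangle coordinate by $t^{a(k+2-a)}$, which is not of the form $u^{a(k+1-a)}$ for a single $u$, so $f^{(k)}$ of the projection is no longer a rescaling of $\vec h'$. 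Thus the surjectivity of the normalized $\pi_k$ needs a genuinely different argument than "apply $f^{(k+1)}$ and undo".

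For part (2) there is a definite gap at the point you yourself flag: termwise restriction of a Thoma morphism does \emph{not} produce a $\pi_k$-compatible sequence, and the uniqueness in Theorem \ref{principalDomain}(1) cannot rescue this because the two morphisms being compared genuinely have different rectangle-Schur coordinates. Concretely, $\pi_k(\varphi_{k+1})$ is (after normalization) the unique element of $\partial\mathcal{H}^+(\mathcal{B}_k)$ whose value at $R_a^{(k)}=(k+1-a)^a$ is proportional to $\varphi_{k+1}\bigl(s_{R_a^{(k+1)}}\bigr)=\varphi_{k+1}\bigl(s_{(k+2-a)^a}\bigr)$, while $\psi|_{\Lambda_{(k)}}$ takes the value $\psi\bigl(s_{(k+1-a)^a}\bigr)$ there; these are evaluations at different partitions. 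To see this fail numerically, take $\psi$ the Plancherel specialization $\psi(h_n)=1/n!$, so that $\psi|_{\Lambda_{(2)}}=(1,\tfrac12)$ in $h$-coordinates. Then $\psi|_{\Lambda_{(3)}}=(1,\tfrac12,\tfrac16)$ has $g^{(3)}$-coordinates $(s_{(3)},s_{(2,2)},s_{(1,1,1)})=(\tfrac16,\tfrac1{12},\tfrac16)$; projecting and applying $f^{(2)}$ from Example \ref{Ex_k=2} gives $(\sqrt{\tfrac16+\tfrac1{12}},\tfrac16)=(\tfrac12,\tfrac16)$, and renormalizing to $h_1=1$ gives $\pi_2(\psi|_{\Lambda_{(3)}})=(1,\tfrac23)\neq(1,\tfrac12)=\psi|_{\Lambda_{(2)}}$. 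So the map $\psi\mapsto(\psi|_{\Lambda_{(k)}})_k$ does not take values in $\varprojlim\partial\mathcal{H}^+(\mathcal{B}_k)$, and the identification with the Thoma simplex must be built through a different correspondence between a Thoma parameter and a $\pi_k$-tower of rectangle data.
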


\subsection{Rietsch parametrization of Toeplitz matrices}

Consider the variety $T_{\geq0}\subset\mathbb{R}_{>0}^{k}$ of totally
nonnegative unitriangular Toeplitz $(k+1)\times(k+1)$ matrices
\[
M=\left[
\begin{array}
[c]{cccccc}%
1 &  &  &  &  & \\
h_{1} & 1 &  &  &  & \\
\vdots & h_{1} & \ddots &  &  & \\
\vdots & \vdots & \ddots & \ddots &  & \\
h_{k-1} & \vdots & \vdots & \ddots & \ddots & \\
h_{k} & h_{k-1} & \cdots & \cdots & h_{1} & 1
\end{array}
\right]  .
\]
The set $T_{>0}$ of totally positive unitriangular Toeplitz $(k+1)\times(k+1)$
matrices is defined as the subset of $T_{\geq0}$ of matrices $M$ whose minors
with no row and no column in the upper part of $M$ are positive. By Theorem
3.2.1 in \cite{BFZ}, $M$ is totally positive if and only if for $a=1,\ldots
,k$, the $a\times a,$ initial minors obtained by selecting $a$ rows of $M$
arbitrary and then the first $a$ columns of $M$ are positive.

\begin{lemma}
\ \label{Lemma_closed}

\begin{enumerate}
\item The previous initial minors are equal to Schur functions $s_{\lambda}$,
where the maximal hook of the partition $\lambda$ has length less or equal to
$k$.

\item We have $\overline{T}_{>0}=T_{\geq0}$ that is, each totally nonnegative
unitriangular Toeplitz matrix is the limit of a sequence of totally positive
unitriangular Toeplitz matrices.
\end{enumerate}

\begin{proof}
Let $L=\{i_{1},\ldots,i_{a}\}$ be a subset of $\{1,\ldots,k\}$ such that
$i_{1}<\cdots<i_{k}$ and consider the minor $\Delta_{L}$ corresponding to the
determinant of the submatrix $M_{L\times\lbrack1,a]}$. The diagonal of
$M_{L\times\lbrack1,a]}$ is $(h_{i_{1}},h_{i_{2}-1},\cdots,h_{i_{k}-k+1})$
where $i_{k}-k+1\geq\cdots\geq i_{2}-1\geq i_{1}$. Thus, by using the
Jacobi-Trudi formula we have $\Delta_{L}=s_{(i_{k}-k+1,\ldots,i_{2}-1,i_{1})}%
$. The maximal hook length of the partition $\lambda=(i_{k}-k+1,\ldots
,i_{2}-1,i_{1})$ is equal to $(i_{k}-k+1)+(k-1)=i_{k}\leq k$ which proves
assertion 1$.$

To get Assertion 2, consider $M\in T_{\geq0}$ and $U\in T_{>0}$. For any real
$t>0$ let $U(t)$ be the matrix obtained by replacing each real $h_{a}$ by
$t^{a}h_{a}$ in $U$. Then $U(t)$ belongs to $T_{>0}.\ $Indeed, with the
previous notation, if the minor $\Delta_{L}$ associated to $U$ is equal to the
Schur function $s_{\lambda}$, then the corresponding minor in $U(t)$ is equal
to $t^{\left\vert \lambda\right\vert }s_{\lambda}$.\ The set $T_{\geq0}$ is
stable by matrix multiplication and we moreover get from Proposition 10 in
\cite{FZ} that the product matrix $U(t)M$ is totally positive. Since $U(t)$
tends to the identity matrix when $t$ tends to $0$, we obtain that $U(t)M$
tends to $M$ as desired.
\end{proof}
\end{lemma}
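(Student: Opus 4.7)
For the first assertion, the plan is to recognize the minor as a Jacobi--Trudi determinant. Given $L=\{i_{1}<\cdots<i_{a}\}\subset\{1,\dots,k+1\}$, the matrix $M_{L\times[1,a]}$ has $(p,q)$-entry $h_{i_{p}-q}$, with the convention $h_{0}=1$ and $h_{r}=0$ for $r<0$. Reversing both the rows and the columns (which preserves the determinant, the two sign changes $(-1)^{\binom{a}{2}}$ cancelling) produces the matrix with $(p,q)$-entry $h_{i_{a-p+1}-(a-q+1)}=h_{\lambda_{p}-p+q}$, where I set
\[
\lambda_{p}=i_{a-p+1}-(a-p+1)\quad\text{for }p=1,\ldots,a.
\]
The strict inequalities $i_{j}<i_{j+1}$ translate into the weak inequalities $\lambda_{p}\geq\lambda_{p+1}\geq 0$, so $\lambda$ is a genuine partition, and the reversed matrix is exactly the Jacobi--Trudi matrix for $\lambda$. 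Hence the minor equals $s_{\lambda}$. The maximal hook length of $\lambda$ lies at the box $(1,1)$ and equals $\lambda_{1}+\ell(\lambda)-1$; bounding $\lambda_{1}=i_{a}-a$ and using that $\ell(\lambda)=a-t$, where $t\geq 0$ is the number of initial $i_{j}$ equal to $j$, gives a maximal hook length of $i_{a}-t-1\leq i_{a}-1\leq k$.

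For the second assertion, the strategy is to produce a one-parameter family in $T_{>0}$ converging to the identity matrix and then to use Proposition~10 of \cite{FZ} (which asserts that a product of a totally positive and a totally nonnegative Toeplitz matrix is totally positive) to conclude. Concretely, fix any $U\in T_{>0}$ — for example, the Toeplitz matrix attached to $h_{j}=h_{j}(X_{1},\ldots,X_{k})$ with $X_{1},\ldots,X_{k}>0$, for which all relevant Schur functions $s_{\lambda}(X_{1},\ldots,X_{k})$ are strictly positive whenever $\ell(\lambda)\leq k$. For $t>0$, let $U(t)$ be obtained from $U$ by replacing each subdiagonal entry $h_{a}$ by $t^{a}h_{a}$.

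Now I invoke homogeneity: by part~1, the initial minor of $U(t)$ indexed by $L$ equals a Schur function $s_{\lambda}$ evaluated at the scaled arguments, and since $s_{\lambda}$ is homogeneous of degree $|\lambda|$ in the $h_{j}$'s (with $\deg h_{j}=j$), this minor equals $t^{|\lambda|}s_{\lambda}(h_{1},\ldots,h_{k})>0$. Therefore $U(t)\in T_{>0}$ for every $t>0$, and $U(t)\to I_{k+1}$ as $t\to 0^{+}$. Given $M\in T_{\geq 0}$, Proposition~10 of \cite{FZ} yields $U(t)M\in T_{>0}$ for all $t>0$, and continuity of multiplication gives $U(t)M\to M$. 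The reverse inclusion $\overline{T}_{>0}\subset T_{\geq 0}$ is immediate because $T_{\geq 0}$ is defined by the closed conditions that all minors are nonnegative.

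The main obstacle will be the bookkeeping in part~1: verifying that the row/column reversal really matches the Jacobi--Trudi pattern and that the resulting $\lambda$ is a partition with the claimed hook bound, especially tracking the number of vanishing parts. Part~2 is essentially a standard density argument once one has an explicit $U(t)\to I$ inside $T_{>0}$, for which the homogeneity of Schur polynomials is the crucial ingredient.
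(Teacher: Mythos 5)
Your proof is correct and follows essentially the same approach as the paper: recognizing the initial minors as Jacobi--Trudi determinants with maximal hook length at most $k$, then using the homogeneous scaling $h_{a}\mapsto t^{a}h_{a}$ together with Proposition~10 of \cite{FZ} to approximate any $M\in T_{\geq0}$ by $U(t)M\in T_{>0}$. You use $1$-based row/column indexing rather than the paper's $0$-based convention (so your partition is $\lambda_{p}=i_{a-p+1}-(a-p+1)$ instead of the paper's $i_{a-p+1}-(a-p+1)+1$), and you are slightly more careful about the possible vanishing parts of $\lambda$, but the content and conclusion are the same.
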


Observe in particular that for any $a=1,\ldots,k$, the initial minor
$\Delta_{\lbrack k-a+1,k]}$ gives the value $r_{a}$ of the rectangle Schur
function $s_{R_{a}}$ evaluated in $(h_{1},\ldots,h_{k})$. In \cite{Ri},
Rietsch obtained the following parametrization of $T_{\geq0}$ by using the
quantum cohomology of partial flag varieties.

\begin{theorem}
The map%
\[
\left\{
\begin{array}
[c]{c}%
T_{\geq0}\rightarrow\overline{U}\\
(h_{1},\ldots,h_{k})\longmapsto(r_{1},\ldots,r_{k})
\end{array}
\right.
\]
is a homeomorphism.
\end{theorem}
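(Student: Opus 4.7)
The plan is to identify the Toeplitz variety $T_{\geq 0}$ with the set $\overline{V}$ introduced in Section \ref{SubSec_Twoparam}, and then invoke Theorem \ref{Th_UVhomeo}. By Lemma \ref{Lemma_closed}(1), the initial minor obtained from the last $a$ rows and the first $a$ columns of $M$ equals the rectangle Schur polynomial $s_{R_a}(\vec{h})$, so the map in the statement is nothing but the restriction to $T_{\geq 0}$ of the map $g:\overline{V}\to\overline{U}$ of Section \ref{SubSec_Twoparam}. Since $g$ is a homeomorphism by Theorem \ref{Th_UVhomeo}, it will suffice to establish the equality $T_{\geq 0}=\overline{V}$.

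The inclusion $\overline{V}\subset T_{\geq 0}$ is the straightforward direction. Given $\vec{h}\in\overline{V}$, any minor of $M$ equals $s_\lambda(\vec{h})$ for some partition $\lambda$ with maximal hook length at most $k$ (Lemma \ref{Lemma_closed}(1)). Such $\lambda$ is both $k$-bounded and a $(k+1)$-core, so $s_\lambda=s_\lambda^{(k)}$ by the proposition preceding Corollary \ref{reducedCase}. Combining with the factorization of Corollary \ref{reducedCase} and the defining inequalities of $\overline{V}$, we get $s_\lambda(\vec{h})=s_\lambda^{(k)}(\vec{h})\geq 0$, so every minor of $M$ is nonnegative and $\vec{h}\in T_{\geq 0}$.

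For the reverse inclusion $T_{\geq 0}\subset\overline{V}$, I would use the density statement $T_{\geq 0}=\overline{T_{>0}}$ from Lemma \ref{Lemma_closed}(2), together with closedness of $\overline{V}$, to reduce to proving $T_{>0}\subset\overline{V}$. Given $\vec{h}\in T_{>0}$, the classical theorem of Aissen-Edrei-Schoenberg-Whitney on totally positive polynomial Toeplitz matrices ensures that $1+h_1t+\cdots+h_kt^k$ has only nonpositive real roots, hence $h_r=e_r(x_1,\ldots,x_k)$ for some positive reals $x_1,\ldots,x_k$. Since the involution $\omega$ on $\Lambda$ swaps the $h_r$ with the $e_r$ and sends $s_\mu$ to $s_{\mu'}$, this gives $s_\mu(h_1,\ldots,h_k,0,\ldots)=s_{\mu'}(x_1,\ldots,x_k)\geq 0$ for every partition $\mu$. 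The nonnegative Schur expansion of $k$-Schur functions (Proposition \ref{Prop_positiveExp}(2)) then yields $s_\kappa^{(k)}(\vec{h})\geq 0$ for every irreducible $k$-partition $\kappa$, so $\vec{h}\in\overline{V}$.

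The main obstacle is the appeal to the classical characterization of finite totally positive Toeplitz matrices via polynomials with nonpositive real roots; this is an external ingredient not established in the paper, but once granted, the two inclusions yield $T_{\geq 0}=\overline{V}$, and the homeomorphism in Theorem \ref{Th_UVhomeo} specializes to the desired parametrization of $T_{\geq 0}$ by its $k$ southwest initial minors.
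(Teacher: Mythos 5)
Your plan to reduce everything to the identity $T_{\geq 0}=\overline{V}$ and then invoke Theorem \ref{Th_UVhomeo} matches the paper exactly, but each inclusion is handled differently from the paper and your first inclusion has a gap.

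For $\overline{V}\subset T_{\geq 0}$ you assert that \emph{any} minor of $M$ equals $s_{\lambda}(\vec{h})$ for some $\lambda$ of hook length $\leq k$ and cite Lemma \ref{Lemma_closed}(1). That lemma only covers the \emph{initial} minors (first $a$ columns). A general minor $\det(h_{l_p-k_q})$ of $M$ is a \emph{skew} Schur function $s_{\lambda/\mu}(\vec{h})$, not a Schur function, and so cannot directly be handled by Corollary \ref{reducedCase}. There are two ways to repair this. One is to invoke Littlewood--Richardson positivity: each skew Schur is a nonnegative integer combination of Schur functions $s_{\nu}$ with $\nu\subseteq\lambda$, hence of hook length $\leq k$, and each such $s_{\nu}=s_{\nu}^{(k)}$ is nonnegative on $\overline{V}$ by Corollary \ref{reducedCase}. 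The other (closer to the paper) is to first prove only $V\subset T_{>0}$, which needs only the initial minors and the Berenstein--Fomin--Zelevinsky characterization of total positivity already quoted before Lemma \ref{Lemma_closed}, and then pass to closures: $V$ is dense in $\overline{V}$ (via Theorem \ref{Th_UVhomeo}, since $U$ is dense in $\overline{U}$) and $\overline{V}$ is closed in $\mathbb{R}^k$, so $\overline{V}=\overline{V}^{\,cl}\subset\overline{T_{>0}}=T_{\geq 0}$ by Lemma \ref{Lemma_closed}(2). As written, your step "every minor of $M$ is nonnegative" is not yet established.

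For $T_{\geq 0}\subset\overline{V}$ your route is genuinely different from the paper's and, granted the external ingredient, it is correct. You reduce to $T_{>0}\subset\overline{V}$ by density and then apply the Aissen--Edrei--Schoenberg--Whitney characterization to write $h_r=e_r(x_1,\ldots,x_k)$ with $x_i>0$, whence $s_{\mu}(\vec{h})=s_{\mu'}(x_1,\ldots,x_k)\geq 0$ and Proposition \ref{Prop_positiveExp}(2) gives $s_{\kappa}^{(k)}(\vec{h})\geq 0$. The paper instead proves $T_{>0}=V$ by a purely topological argument: $V\subset T_{>0}$ as above, $V$ is open in $T_{>0}$ by construction, $V$ is closed in $T_{>0}$ via Theorem \ref{Th_UVhomeo}, and $T_{>0}$ is connected (cited from Rietsch). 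Both approaches rely on one external fact not proved in the paper (AESW for you, connectedness of $T_{>0}$ for the authors); yours trades the connectedness input for the classical factorization of totally positive Toeplitz polynomials and the Schur-positivity of $k$-Schur functions. It is an attractive alternative, and notably it avoids the "open--closed--connected" bootstrap; but you should be explicit that you are using AESW for finite Toeplitz matrices, and you must correct the reading of Lemma \ref{Lemma_closed}(1) in the other inclusion.
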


We now reprove this theorem from our preceding results.

\begin{theorem}
\label{Th_RiRef}We have $T_{>0}=V$ and $T_{\geq0}=\overline{V}$, in particular
the map $g:T_{\geq0}\rightarrow\overline{U}$ is a homeomorphism$.$
\end{theorem}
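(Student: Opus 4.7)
The plan is to prove $T_{>0}=V$ by two inclusions, after which $T_{\geq 0}=\overline{V}$ will follow by taking topological closures: Lemma \ref{Lemma_closed}(2) yields $\overline{T_{>0}}=T_{\geq 0}$, while the homeomorphism $f:\overline{U}\to\overline{V}$ of Theorem \ref{Th_UVhomeo} together with the density of $U$ in $\overline{U}$ shows that $V$ is dense in $\overline{V}$, so the set-theoretic $\overline{V}$ defined by weak inequalities coincides with the topological closure of $V$. The final homeomorphism $g:T_{\geq 0}\to\overline{U}$ is then immediate from Theorem \ref{Th_UVhomeo}(1).

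For the inclusion $V\subset T_{>0}$, I would take $(h_1,\ldots,h_k)\in V$ and let $\varphi$ be the associated morphism on $\Lambda_{(k)}$ with $\varphi(h_i)=h_i$. The defining conditions of $V$ together with the factorization $s_\mu^{(k)}=\prod_a s_{R_a}^{p_a^\mu}\,s_{\widetilde{\mu}}^{(k)}$ of Corollary \ref{reducedCase} propagate strict positivity of $\varphi$ from the rectangle Schurs and irreducibles to every $k$-Schur function. Any partition $\lambda$ with maximal hook $\leq k$ satisfies $\lambda_1+\ell(\lambda)-1\leq k$, hence is simultaneously $k$-bounded and a $(k+1)$-core, so the proposition preceding Corollary \ref{reducedCase} forces $s_\lambda^{(k)}=s_\lambda$ and therefore $s_\lambda(h)=\varphi(s_\lambda^{(k)})>0$. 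By Lemma \ref{Lemma_closed}(1) these are exactly the initial minors of the Toeplitz matrix $M(h)$, so the BFZ criterion recalled just before that lemma places $M(h)$ in $T_{>0}$.

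For the reverse inclusion $T_{>0}\subset V$, I would invoke the classical Aissen--Edrei--Schoenberg--Whitney characterization: $(h_1,\ldots,h_k)\in T_{>0}$ if and only if the generating polynomial factors as $1+h_1 z+\cdots+h_k z^k=\prod_{i=1}^{k}(1+x_i z)$ with $x_i>0$, equivalently $h_j=e_j(x_1,\ldots,x_k)$. Strict positivity of $\varphi(s_{R_a})$ is immediate since $R_a$ has maximal hook $k$. For an irreducible $\kappa\in\mathcal{P}_{\mathrm{irr}}$, the involution $\omega$ of $\Lambda$ (exchanging $h_i\leftrightarrow e_i$ and $s_\mu\leftrightarrow s_{\mu'}$) gives $\varphi(s_\kappa^{(k)})=\omega(s_\kappa^{(k)})(x_1,\ldots,x_k)$. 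Proposition \ref{Prop_positiveExp}(2), combined with the standard unitriangularity of the $k$-Schur-to-Schur transition matrix in the dominance order (leading term $s_\kappa$ with coefficient one), expresses this value as a nonnegative combination of $s_{\mu'}(\vec{x})$ that contains the term $s_{\kappa'}(\vec{x})$; and $s_{\kappa'}(\vec{x})>0$ on $\mathbb{R}_{>0}^k$ because $\kappa$ is $k$-bounded, so $\ell(\kappa')=\kappa_1\leq k$.

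The main obstacle I expect is precisely this second inclusion, because for some irreducible $\kappa$ the ordinary Schur expansion of $s_\kappa^{(k)}$ already involves partitions of hook length exceeding $k$ (for instance $s_{(2,1,1)}^{(3)}=s_{(3,1)}+s_{(2,1,1)}$, both summands having hook $4>3$), so the naive Lemma \ref{Lemma_closed}(1) argument is not enough. The Aissen--Edrei--Schoenberg--Whitney parametrization is the classical bridge which converts every Schur evaluation at the Toeplitz point into a Schur evaluation of the conjugate partition at hidden positive variables $\vec{x}$, where nonnegativity of Schur polynomials at nonnegative arguments is automatic. A purely combinatorial route bypassing ASW would essentially require a direct proof that the projection $\vec{h}\mapsto(s_{R_a}(\vec{h}))_a$ is injective on $T_{>0}$, but this is tantamount to Rietsch's original statement and would undercut the whole reduction.
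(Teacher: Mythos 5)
The inclusion $V\subset T_{>0}$, and the passage from $T_{>0}=V$ to $T_{\geq0}=\overline V$ and to the homeomorphism, match the paper and are fine. The problem is your argument for $T_{>0}\subset V$: it rests on a version of the Aissen--Edrei--Schoenberg--Whitney theorem that is \emph{false} for finite Toeplitz matrices. AESW characterizes totally nonnegative \emph{infinite} Toeplitz matrices (P\'olya frequency sequences): for a finitely supported sequence, the infinite array is totally nonnegative iff $1+h_1z+\cdots+h_kz^k$ splits over $\mathbb{R}_{<0}$. But the $(k+1)\times(k+1)$ truncation can lie in $T_{>0}$ while this polynomial has complex roots, because total positivity of the finite matrix only asks for positivity of the initial minors $s_\lambda(\vec h)$ with $\lambda$ of hook length $\leq k$, a much smaller family of inequalities. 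Already for $k=2$ take $(h_1,h_2)=(1.9,1)$: the initial minors are $1$, $h_1=1.9$, $h_2=1$, $h_1^2-h_2=2.61$, $\det M=1$, all positive, so $M\in T_{>0}$; yet $1+1.9z+z^2$ has discriminant $1.9^2-4<0$, so there is no $\vec x\in\mathbb{R}_{>0}^2$ with $h_j=e_j(\vec x)$. The hidden positive variables on which the rest of your argument is built (evaluating $\omega(s_\kappa^{(k)})$ at $\vec x$ and using unitriangularity to get a strictly positive leading term) simply do not exist for a generic point of $T_{>0}$. Consequently the inclusion $T_{>0}\subset V$, which you rightly flagged as the delicate step, is not established.

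The paper avoids exhibiting any such preimage and instead argues topologically. It observes that $V$ is open in $T_{>0}$ (it is cut out by strict polynomial inequalities) and that $V$ is closed in $T_{>0}$: if $\vec h_n\in V$ converges to $\vec h\in T_{>0}$, then $\vec h\in\overline V$, the initial rectangle minors $r_a=\Delta_{[k-a+1,k]}(\vec h)$ are strictly positive so $g(\vec h)=\vec r\in U$, and the homeomorphism $f\colon\overline U\to\overline V$ of Theorem~\ref{Th_UVhomeo} (which restricts to a bijection $U\to V$) forces $\vec h=f(\vec r)\in V$. Since $T_{>0}$ is connected (Rietsch, Proposition~12.2), the nonempty open-and-closed subset $V$ must be all of $T_{>0}$. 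This is exactly where the paper's main homeomorphism earns its keep; the AESW shortcut you proposed would have made Theorem~\ref{Th_UVhomeo} unnecessary, which in hindsight is a warning sign that the shortcut cannot be correct.
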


\begin{proof}
Observe first we have $V\subset T_{>0}$. Indeed we know that each $k$-Schur
function $s_{\lambda}^{(k)}$ evaluated in $\vec{h}=(h_{1},\ldots,h_{k})$ in
$V$ is positive. This is in particular true when $\lambda$ is a partition with
maximal hook length less or equal to $k$ but then, we get by Assertion 1 of
the previous Lemma that the associated Toeplitz matrix is totally positive
because such $k$-Schur functions coincide with ordinary Schur functions. Next
consider a sequence $\vec{h}_{n},n\geq0$ in $V$ which converges to a limit
$\vec{h}\in T_{>0}$. Since $\vec{h}\in T_{>0}$, each $r_{a}=\Delta_{\lbrack
k-a+1,k]}(\vec{h}),a=1,\ldots,k$ is positive. Thus $\vec{r}=(r_{1}%
,\ldots,r_{k})$ belongs to $U$. Now $\vec{h}$ belongs to $\overline{V}$ and we
have $g(\vec{h})=\vec{r}$ by definition of $g$. Theorem \ref{Th_UVhomeo} then
implies that $\vec{h}\in V$ so $V$ is closed in $T_{>0}$. Now $V$ is open in
$T_{>0}$ because each $\vec{h}\in V$ admits a neighborhood contained in
$V\subset T_{>0}$ ($V$ is an intersection of open subsets by definition). We
also have that $T_{>0}$ is connected (see for example the proof of Proposition
12.2 in \cite{Ri}). So $V$ is nonempty both open and closed in $T_{>0}$ and we
therefore have $T_{>0}=V$. The second assertion of Lemma \ref{Lemma_closed}
then gives $T_{\geq0}=\overline{T}_{>0}=\overline{V}$.
\end{proof}

\bigskip

\begin{remarks}
\ \label{Rema_redtest}

\begin{enumerate}
\item Since $T_{>0}=V,$ we get by using the initial minors of $M$ and
Assertion 1 of Lemma \ref{Lemma_closed} that $\vec{h}$ belongs to $V$ if and
only if the Schur functions $s_{\lambda}$ with $\lambda$ of maximal hook
length less or equal to $k$ evaluated at $\vec{h}$ are positive.\ Thus the
criterion to test the positivity of our morphisms reduces to Schur functions
and can be performed without using the $k$-Schur functions.

\item By Theorem \ref{Th_UVhomeo} we are able to compute $g=f^{-1}$ from the
Perron Frobenius vectors of the matrices $\Phi$. So our Theorem \ref{Th_RiRef}
permits in fact to compute the nonnegative Toeplitz matrix associated to any
point of $\overline{U}$ (i.e. to reconstruct $M$ from the datum of the minors
$r_{1},\ldots,r_{k}$).
\end{enumerate}
\end{remarks}

\section{Perspectives}
We expect that most of the results contained in this paper can be extended to types $B,C,D$. 
Indeed symplectic and orthogonal analogues of $k$-Schur functions have been introduced in \cite{LSS} and \cite{P}. 
They satisfy similar Pieri rules and relevant rectangle factorizations which are crucial 
ingredients in our proofs. This should permit to study central alcove random walks in the Weyl chambers of types $B,C,D$.
Another interesting problem is to consider these random walks without the restriction to stay in the Weyl chamber. 
We ignore if the graph of the weak Bruhat order (analogue of the $k$-partition poset in this setting) is then multiplicative and 
if so, for which underlying commutative algebra. Nevertheless, one can expect to reduce the problem to 
random alcove walks in Weyl chambers by purely probabilistic arguments.

\bigskip

\noindent\textbf{Acknowledgments:} Both authors thank the international
French-Mexican laboratory LAISLA, the CIMAT and the IDP for their support and
hospitality. We are also grateful to J. Guilhot for numerous discussions on
the combinatorics of alcoves and T. Lam for indicating us references which
could permit to extend results of the paper beyond type $A$.

\bigskip

\noindent C\'{e}dric Lecouvey: Institut Denis Poisson (UMR CNRS 7013).

\noindent Universit\'{e} de Tours Parc de Grandmont, 37200 Tours, France.

\noindent{cedric.lecouvey@lmpt.univ-tours.fr}

\bigskip

\noindent Pierre Tarrago: CIMAT.

\noindent36023 Guanajuato, Mexico.

\noindent{pierre.tarrago@cimat.mx}

\end{document}